\documentclass[a4paper,12pt]{siamart251216}
\usepackage{amssymb,amsmath}
\usepackage{wasysym}
\usepackage{graphicx}
\usepackage{mathtools}
\usepackage[bottom]{footmisc}
\usepackage{hyperref}
\usepackage[a4paper, margin=1in]{geometry}
\usepackage[dvipsnames]{xcolor}
\usepackage[utf8]{inputenc}
\usepackage{siunitx}
\usepackage{dsfont,bookmark}
\usepackage{pgfplots}
\usepackage{csquotes}
\usepackage[caption=false]{subfig}
\usepackage{tikz-3dplot}
\pgfplotsset{compat=1.18}
\usetikzlibrary{decorations.markings}
\usepackage[all]{nowidow}

\newcommand{\dd}{\mathrm{d}}
\newcommand{\A}{\mathcal{A}}

\newcommand{\DD}{\mathbb{D}}
\newcommand{\EE}{\mathbb{E}}
\newcommand{\HH}{\mathbb{H}}
\newcommand{\PP}{\mathbb{P}}
\newcommand{\RR}{\mathbb{R}}
\newcommand{\ZZ}{\mathbb{Z}}

\newcommand{\Euc}{\mathrm{Euc}}
\newcommand{\prin}{\mathrm{prin}}
\newcommand{\rem}{\mathrm{rem}}

\DeclareMathOperator{\vol}{dVol}
\DeclareMathOperator{\supp}{supp}

\newcommand{\inner}[2]{\left\langle{#1},{#2}\right\rangle}

\renewcommand{\epsilon}{\varepsilon}

\newsiamthm{conjecture}{Conjecture}
\newsiamremark{remark}{Remark}
\newsiamremark{assumption}{Assumption}
\AddToHook{env/conjecture/begin}{\crefalias{theorem}{conjecture}}
\AddToHook{env/remark/begin}{\crefalias{theorem}{remark}}
\AddToHook{env/assumption/begin}{\crefalias{theorem}{assumption}}

\usepackage{algorithm}
\usepackage{algpseudocode}
\usepackage{listings}
\title{
Mean first escape times of Brownian motion on asymptotically hyperbolic and gas giant metric surfaces
\thanks{Submitted to the editors on: March 18, 2026 
\funding{J. Gell-Redman is partially supported by the Australian Research Council under grant number: DP210102319.
E.J.\ Godfried is partially supported by the Melbourne Research Scholarship and by Stichting dr Hendrik Muller's Vaderlandsch Fonds.
J. Tzou is partially supported by the Australian Research Council under grant number: DP220101808.
L. Tzou is partially supported by the Australian Research Council under grant numbers: DP220101808 and DP260103195.}
}
}

\author{
Jesse Gell-Redman \thanks{School of Mathematics \& Statistics, The University of Melbourne, Parkville, VIC, 3010, Australia (\email{j.gell@unimelb.edu.au})}
\and 
Emanuel J\'ozsef Godfried \thanks{School of Mathematics \& Statistics, The University of Melbourne, Parkville, VIC, 3010, Australia (\email{egodfried@student.unimelb.edu.au})}
\and 
Justin Tzou \thanks{School of Mathematics \& Statistics, The University of New South Wales, Kensington, NSW, 2033, Australia (\email{tzou.justin@gmail.com})} 
\and 
Leo Tzou \thanks{School of Mathematics \& Statistics, The University of Melbourne, Parkville, VIC, 3010, Australia (\email{leo.tzou@gmail.com})}
}

\date{March 18, 2026}
\counterwithin{equation}{section}

\begin{document}

\maketitle
\begin{abstract}
    This paper deals with the mean first escape time of Brownian motion on asymptotically hyperbolic and gas giant surfaces. We show that for a boundary defining function \(\rho\),
    the mean first escape time \(u_\epsilon(x)\) from the truncated Riemannian surface with an asymptotically hyperbolic metric \((M_\epsilon,\bar{g}/\rho^2) =  (\{x\in M:\rho(x)\geq \epsilon\},\bar{g}/\rho^2) \subset (M,\bar{g}/\rho^2)\) satisfies the asymptotic expansion \(u_\epsilon(x) = -\log \epsilon + \mathcal{O}(1)\) as \(\epsilon\to 0 \). 
    Furthermore, we show that in the case of a gas giant metric \(g = \bar{g}/\rho^\alpha\), where \(\alpha\in (0,2)\), the mean first escape time from the surface \((M_\epsilon,\bar{g}/\rho^\alpha)\) satisfies \(u_\epsilon(x) = \mathcal{O}(1)\) as \(\epsilon\to 0 \).
    Using techniques from the theory of polyhomogeneous conormal functions we explain this difference between in the mean first escape time on gas giant metric surfaces and asymptotically hyperbolic surfaces on the unit disc.
    Finally, we confirm these results using Monte Carlo simulations and finite difference methods on the disc.
\end{abstract}
\begin{keywords}{Hyperbolic Brownian motion, Narrow escape problem}\end{keywords}
\begin{MSCcodes}{Primary: 58J65, Secondary: 58J32, 58J40, 60J65, 92C37}\end{MSCcodes}
\section{Introduction}\label{sec:intro}
We study narrow escape problems for Brownian motion on asymptotically hyperbolic surfaces \(M\), equipped with a conformally compact metric \(g = \bar{g}/\rho^2\), where \(\rho\) is a boundary defining function and \(\bar{g}\) is a smooth, non-degenerate Riemannian metric. In this setting, the narrow escape problem concerns the first time \(\tau_\epsilon^x\) a Brownian particle \(X_t^x\), starting at a point \(x \in M\), escapes the region  
\[
    M_\epsilon = \{x \in M : \rho(x) \geq \epsilon\} \ .
\]
We then denote the mean first escape time (MFET) from \(M_\epsilon\) of the Brownian motion starting at \(x\in M_\epsilon \) by \(u_\epsilon(x) = \mathbb{E}[\tau_\epsilon^x ]\).

Narrow escape problems have been extensively studied in Euclidean and Riemannian settings, motivated in part by applications in cellular biology, such as in the modelling of {the timescales of} ion diffusion or protein transport (readers may refer to \cite{holcman2004escape,schuss2007NE,Cheviakov2010asymptotic,holcman2014nep} and references therein, with more detailed theory and applications found in textbooks such as \cite{Bressloff2021a,Bressloff2021b,holcman2015stoch}). For example, in a two-dimensional bounded Euclidean domain \(\Omega\), the expected escape time through a small absorbing trap of radius \(\epsilon\) behaves as \({u_\epsilon \sim -|\Omega| \log \epsilon + \mathcal{O}(1)}\), as \(\epsilon \to 0\) \cite{holcman2004escape}. This result and its generalisations have led to a rich literature on related problems, including narrow capture and escape on Riemannian manifolds \cite{singer2006NE, benichou2008narrow, Cheviakov2010asymptotic, Nursultanov2023}. 

More refined expansions also account for geometric features. For instance, in the case of a single circular trap in a Euclidean domain of \(\RR^3\), the expected escape time satisfies  
\[
    u_\epsilon \sim \frac{|\Omega|}{4\epsilon} \left(1 - \frac{\epsilon}{\pi} H \log \epsilon \right) + \mathcal{O}(1),
\]
where \(H\) is the mean curvature at the centre of the trap \cite{singer2008NE}. These expansions have also been extended to Riemannian 3-manifolds using microlocal techniques \cite{nursultanov2021mean}. {These models often assume uniform diffusion, and the timescales are computed as the target or trap decreases in size. However, in certain cellular environments, the diffusion process may be influenced by geometric factors or varying diffusion rates, leading to non-uniform diffusion patterns.}

{In this work, we extend the analysis of narrow escape to two-dimensional asymptotically hyperbolic and gas giant metric manifolds, which can model scenarios where the diffusion rate slows down near the boundary or cellular membrane. Specifically, in the case of an asymptotically hyperbolic manifold, the model assumes a  slowdown  of order \(1/\rho^2\), where \(\rho\) is the distance from the boundary.}
{The central question we address is:  
What is the mean {first} escape time \(u_\epsilon\){(\(x)\) of a Brownian motion starting at \(x\)} from \(M_\epsilon\), as \(\epsilon \to 0\)? }

Prior work on Brownian motion on hyperbolic spaces has largely been probabilistic and restricted to upper half-space models \(\mathbb{H}^d\) with \cite{Gertsenshtein1959waveguides} being (one of) the first articles on hyperbolic Brownian motion, where the authors show  ``{\dots} how, with the help of Lobachevsky geometry one can obtain a precise solution of the  radio-engineering problem of a statistically inhomogeneous waveguide -- of a waveguide (transmission line) with random inhomogeneities.''
{Similarly, in \cite{comtet1996diffusion}, the authors show ``how the one-dimensional, classical diffusion of a particle in a quenched random potential \ldots is directly related to Brownian motion on the hyperbolic plane.''}
In \cite{Gruet1996semigroup}, the author calculates directly from the Laplace-Beltrami operator the transition probability density, which is then used in papers such as \cite{Matsumoto2010limiting,ovidio2011Bessel} to obtain  large time probability density distributions of the Brownian motion.

Using similar strategies, the hitting probabilities and  exponential decay laws of the hyperbolic Brownian motion on the upper half-plane \(\HH^d \) have been found \cite{Cammarota2013hittingprob,Cammarota2014asymptotic}, and in \cite{Shiozawa2017escape}, the author gives the escape rate of the hyperbolic Brownian motion.

Our approach on the other hand builds on methods developed in \cite{AMMARI201266} and expanded in articles such as \cite{ nursultanov2021mean, Nursultanov2023}. This allows for robust computation of the mean escape time and higher moments of the associated distribution. Moreover, our techniques can be adapted to more general geometries. {For any compact Riemannian surface \((M,\bar{g})\) with boundary \(\partial M\), we can make \((M,g=\bar{g}/\rho^2)\) into an asymptotically hyperbolic surface. Of this surface, we can then compute the expansion of the  mean first escape time of the Brownian motion starting at \(x\) from the manifold \(M_\epsilon\).}

{Furthermore}, after proving the mean first escape time from an asymptotically hyperbolic manifold, we also consider the mean first escape time from a \emph{gas giant {metric} manifold}. These manifolds, recently introduced in the context of inverse problems by \cite{dehoop2024}, have metrics of the form \(g = \bar{g}/\rho^\alpha\) for \(\alpha \in (0,2)\), with the limit as \(\alpha \to 2 \) being  the asymptotically hyperbolic case. {This corresponds to a slowdown of order \(1/\rho^\alpha\) of the diffusion rate near the boundary, which compared to asymptotically hyperbolic case, is less extreme.} Such manifolds exhibit interesting geometric behaviour: geodesics reach the boundary in finite time, while the volume may be finite or infinite depending on the dimension and \(\alpha\). 
Due to the recent introduction of gas giant geometries, this paper will, to the authors' knowledge, be first treatment of Brownian motion and narrow escape problems in such geometries.

\subsection{Notational conventions}
We work on Riemannian manifolds \((M,g) \) with boundary \(\partial M \) and induced metric \(h= \iota_{\partial M}^* g \in \mathcal{S}^2(\partial M)\). The Laplace-Beltrami operator is given in local coordinates \(x^i\) by 
\begin{equation}
    \Delta_g = \frac{1}{\sqrt{|g|}}\partial_i \left( \sqrt{|g|}g ^{ij}\partial_j  \right)\ ,
\end{equation}
which has non-positive eigenvalues. 
The normal derivative \(\partial_\nu \) is outward pointing, so that Green's formula is given by 
\begin{equation}
    \int_{M}(\Delta_g u) v- u (\Delta_g v) \vol_{g} = \int_{\partial M} (\partial_\nu u) v -u (\partial_\nu v) \vol_h \ . 
\end{equation}

\subsection{The statement of the problem and theorem}
Let \((M,\partial M , \bar{g}) \) be a compact connected orientable smooth non-degenerate Riemannian manifold of dimension two. Let \(\rho(x) : M \to (0,\infty)\) be any boundary defining function. Then the manifold  \((M, \partial M  ,g = \bar{g }/\rho^2 )\) is complete, and the sectional curvatures  are \(-|d\rho(x)|_{\bar g}\) on \(\partial M \). We say that \((M,\partial M)\) is a asymptotically hyperbolic manifold if the sectional curvatures are \(-1 \) on \(\partial M \).
For \(\epsilon>0 \), let \(M_\epsilon\) be the Riemannian manifold with boundary defined by
\begin{eqnarray}\label{eq: def M epsilon}
 M_\epsilon := \{x\in M : \rho(x)\geq  \epsilon \} . 
\end{eqnarray} Set \((X_t^x,\PP^x) \) to be the Brownian motion on \((M,\partial M, g)\) starting at \(x\), and let \(\tau_\epsilon^x \) denote the first time that the Brownian motion \(X_t^x \) is outside \(M_\epsilon\), i.e.\ 
\begin{equation*}
    \tau_\epsilon^x := \inf \{t\geq 0: X_t^x \in M\setminus M_\epsilon\} \ .
\end{equation*}
We wish to study the behaviour of the mean first escape time \(\EE(\tau_\epsilon^x)\) as \(\epsilon\to 0 \). In \cite[Appendix A]{nursultanov2021mean} it is shown  that \(u_\epsilon(x):= \EE(\tau_\epsilon^x )\) satisfies the  boundary value problem
\begin{equation}
    \Delta_g u_\epsilon(x)|_{x\in M_\epsilon^\circ } = -1 , \quad u_\epsilon(x)|_{x\in \partial M_\epsilon} = 0 \quad \text{for \(u_\epsilon\in H^2(M_\epsilon)\cap H^1_0(M_\epsilon)\)} \ .
    \label{eq:bvp}
\end{equation}
Based on this boundary value problem, we state the main theorem.
\begin{theorem}\label{thm:hyperbolic}
    Assume that  \(u_\epsilon(x) \) 
    satisfies the conditions of Equation \eqref{eq:bvp}, then there are functions \(\tilde U _\epsilon(x) \) and \(r_\epsilon(x) \) such that 
    \begin{equation}
        u_\epsilon(x) =-\log\epsilon + \tilde{U}_\epsilon(x)  + r_\epsilon(x)\ ,
    \end{equation}
    where for each fixed \(x\in M_\epsilon \) the function \(\tilde{U}_\epsilon(x) = \mathcal{O}(1)\) as \(\epsilon \to 0 \) and for each fixed \(x\in M_\epsilon \) there is a positive constant \(D_x \) such that  \(|r_\epsilon(x)|\leq D_x \epsilon\log \epsilon \)
    as \(\epsilon\to  0\). 
    Furthermore, if \(K \Subset M ^\circ \) is compact then, the function \(\tilde{U}_\epsilon(x) \) converges uniformly to some smooth bounded function \(\tilde{U}(x) \) on \(K \) and there is a positive constant \(D_K \) such that \(\sup_{x\in K }|r_\epsilon(x)| \leq D_K \epsilon\log \epsilon \).
\end{theorem}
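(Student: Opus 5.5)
We split \(u_\epsilon\) into a fixed \(\epsilon\)-independent part plus a boundary-layer correction. In dimension two, \(\Delta_g = \rho^2 \Delta_{\bar g} + (\text{lower order})\), since \(\sqrt{|g|} = \sqrt{|\bar g|}\rho^{-2}\) and \(g^{ij} = \rho^2 \bar g^{ij}\). Indeed, in two dimensions the conformal factor cancels exactly:
\[
\Delta_g = \rho^2 \Delta_{\bar g}.
\]
The problem \eqref{eq:bvp} is therefore equivalent to
\[
\Delta_{\bar g} u_\epsilon = -\rho^{-2} \ \text{in } M_\epsilon^\circ, \qquad u_\epsilon|_{\rho=\epsilon} = 0 .
\]

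\textbf{Step 1 (model solution).} We find an explicit approximate solution near \(\partial M\). Using collar coordinates \((\rho, y)\) with \(\bar g = d\rho^2/|d\rho|^2_{\bar g} + h_\rho\), the asymptotically hyperbolic condition \(|d\rho|_{\bar g} = 1\) on \(\partial M\) gives
\[
\Delta_{\bar g} = \partial_\rho^2 + \mathcal{O}(1)\,\partial_\rho + \Delta_{h_\rho} + \mathcal{O}(\rho)\,\partial_\rho^2 .
\]
For the function \(-\log \rho\) we have \(\partial_\rho^2(-\log\rho) = \rho^{-2}\), so \(\Delta_{\bar g}(-\log\rho) = \rho^{-2} + \mathcal{O}(\rho^{-1})\). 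Globally, set
\[
F = \rho^{-2} + \Delta_{\bar g}(-\log \rho)\cdot(\text{cutoff}) ,
\]
so that \(F = \mathcal{O}(\rho^{-1})\). I expect \(F\) to be polyhomogeneous conormal, with leading term \(a(y)\rho^{-1}\).

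\textbf{Step 2 (limit equation).} Solve \(\Delta_{\bar g} W = F\) on all of \(M\) with \(W|_{\partial M}=0\). Since \(F \in \rho^{-1}L^\infty\), we first remove the \(\rho^{-1}\) term with an ansatz \(a(y)\rho\log\rho\). This works because \(\partial_\rho^2(\rho\log\rho) = \rho^{-1}\). The remaining right-hand side is bounded, and we solve for it via standard elliptic theory on the compact manifold \((M,\bar g)\). The result is
\[
W = \mathcal{O}(\rho\log\rho), \qquad W \in C^\infty(M^\circ) \cap C^0(M),
\]
with a polyhomogeneous expansion at \(\partial M\). Then \(U := -\log\rho + W\) satisfies \(\Delta_g U = -1\) exactly on \(M^\circ\). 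Define \(\tilde U = -\log \rho + \log\epsilon + W\), or more conveniently write
\[
u_\epsilon = -\log\epsilon + \tilde U_\epsilon + r_\epsilon, \qquad \tilde U_\epsilon(x) := -\log\rho(x) + W(x) + \log\epsilon .
\]
Hmm — this \(\tilde U_\epsilon\) is not \(\mathcal{O}(1)\) pointwise, so the choice must be refined. Instead, take
\[
\tilde U := -\log\rho + W
\]
for fixed interior \(x\), which is smooth and bounded on compact \(K\), and absorb the \(-\log\epsilon\) offset correctly. Since \(U|_{\rho=\epsilon} = -\log\epsilon + W|_{\rho=\epsilon}\), the function \(u_\epsilon - U + (-\log\epsilon)\) is not quite right either. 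The correct setup is as follows. Define
\[
r_\epsilon := u_\epsilon - U - \log\epsilon + \dots
\]
Concretely, \(v_\epsilon := u_\epsilon - (U + \log\epsilon)\) satisfies \(\Delta_g v_\epsilon = 0\) in \(M_\epsilon\) and \(v_\epsilon|_{\rho=\epsilon} = -W|_{\rho=\epsilon} = \mathcal{O}(\epsilon\log\epsilon)\). Hence
\[
u_\epsilon = -\log\epsilon + \bigl(U + 2\log\epsilon\bigr) + v_\epsilon .
\]
The bookkeeping must be arranged so that the \(\mathcal{O}(1)\) term is \(U(x) - \log\rho(x)\)-type. In the paper's normalization \(\tilde U_\epsilon\) likely includes an \(\epsilon\)-dependent constant. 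I will fix the normalisation by matching the boundary value \(-\log\epsilon\) and set \(\tilde U_\epsilon = U - (-\log\epsilon)\) restricted appropriately, so that \(\tilde U_\epsilon \to \tilde U\) locally uniformly.

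\textbf{Step 3 (remainder).} The remainder is harmonic, so the maximum principle applies: \(\Delta_g v_\epsilon = 0\) is equivalent to \(\Delta_{\bar g} v_\epsilon = 0\). This gives
\[
\sup_{M_\epsilon} |v_\epsilon| \le \sup_{\rho=\epsilon} |W| \le D\,\epsilon|\log\epsilon| .
\]
That is exactly \(|r_\epsilon| \le D\epsilon|\log\epsilon|\), uniform even beyond compact \(K\).

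\textbf{Main obstacle.} The main obstacle is Step 2. We must show that the solution \(W\) of the degenerate-data problem vanishes at \(\partial M\) at rate \(\rho\log\rho\), uniformly in \(y\). This requires a polyhomogeneous parametrix: we peel off the indicial terms \(\rho\log\rho\) and \(\rho\), then apply a barrier argument to the bounded remainder. A careful computation of the \(\mathcal{O}(\rho^{-1})\) coefficient, involving the mean curvature of \(\partial M\) and \(\partial_\rho|d\rho|^2\), determines whether a log term genuinely appears.
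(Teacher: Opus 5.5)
Your route is genuinely different from the paper's, and once one sign is corrected it proves the theorem in a stronger form. As written, though, Step~1 has the wrong sign, and that is exactly what derails your bookkeeping in Step~2.

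Since \(\Delta_{\bar g}u_\epsilon=-\rho^{-2}\) and \(\partial_\rho^2\log\rho=-\rho^{-2}\), the model must be \(+\log\rho\), not \(-\log\rho\). With your choice three things go wrong. First, \(F=\rho^{-2}+\chi\Delta_{\bar g}(-\log\rho)=2\rho^{-2}+\mathcal{O}(\rho^{-1})\), which is not \(\mathcal{O}(\rho^{-1})\). Second, the resulting expansion reads \(u_\epsilon\approx\log\epsilon-\log\rho\le 0\), which is impossible for an exit time. Third, the normalisation you settle on, \(\tilde U_\epsilon=U+\log\epsilon\), is not \(\mathcal{O}(1)\) at fixed \(x\).

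The fix is as follows. Put \(F:=-\rho^{-2}-\Delta_{\bar g}(\chi\log\rho)\). Near \(\partial M\) this equals \(\rho^{-2}(|d\rho|^2_{\bar g}-1)-\rho^{-1}\Delta_{\bar g}\rho\in\rho^{-1}C^\infty(M)\); this is precisely where \(|d\rho|_{\bar g}=1\) on \(\partial M\) enters. Solve \(\Delta_{\bar g}W=F\) with \(W|_{\partial M}=0\), and set \(U:=\chi\log\rho+W\), so that \(\Delta_{\bar g}U=-\rho^{-2}\) on \(M^\circ\). Then \(r_\epsilon:=u_\epsilon+\log\epsilon-U\) is \(\bar g\)-harmonic in \(M_\epsilon^\circ\) with \(r_\epsilon=-W\) on \(\{\rho=\epsilon\}\). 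Hence \(u_\epsilon=-\log\epsilon+U+r_\epsilon\) with \(\tilde U_\epsilon:=U\) independent of \(\epsilon\); no \(\epsilon\)-dependent constant is needed.

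A smaller point in Step~2: after subtracting \(\chi a(y)\rho\log\rho\), the remaining source is \(\mathcal{O}(|\log\rho|)\), not bounded, because the first-order part of \(\Delta_{\bar g}\) hits \(\rho\log\rho\). It still lies in every \(L^p\), \(p<\infty\), so \(W^{2,p}\) regularity puts the rest of \(W\) in \(C^{1,\gamma}(\overline M)\), vanishing on \(\partial M\), hence \(\mathcal{O}(\rho)\). That gives \(|W|\le C\rho|\log\rho|\) uniformly without any barrier. Whether the log term genuinely appears is irrelevant for the upper bound.

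The paper instead proceeds as follows:
\begin{itemize}
\item It flattens \(\bar g\) conformally to \(\bar g_0\) (a Euclidean annulus near \(\partial M\)), subtracts \(\chi(\log\rho_0-\log\epsilon)\), and writes the correction \(w_\epsilon\) by Green's formula with the Dirichlet Green's function \(G_0\) of \(M\).
\item The volume term gives \((\chi-1)\log\epsilon+\tilde u_\epsilon\), with \(\tilde u_\epsilon\to\tilde u\) via Hardy's inequality, \(H^{-1}\) convergence and interior regularity.
\item The boundary term \(-\int_{\partial M_\epsilon}G_0\,\partial_\nu w_\epsilon\) is controlled by \(|G_0(x;y)|\le C_x\rho_0(y)\) together with \(\|\partial_\nu w_\epsilon\|_{L^2(\partial M_\epsilon)}=\mathcal{O}(|\log\epsilon|)\). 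That flux bound requires the normal-derivative integral lemmas and inverting \(\tfrac12(I+N^\#_\epsilon)\) through the explicit multiplier \((1-2\epsilon)^{|n|}\) of its principal part plus an \(\mathcal{O}(\epsilon)\) remainder.
\end{itemize}
The paper's limit \(\chi\log\rho_0+\tilde u\) is the same kind of object as your \(U\): an \(\epsilon\)-independent solution of \(\Delta_g U=-1\) with logarithmic boundary behaviour. So the two proofs differ essentially in how the remainder is controlled.

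Your maximum-principle step replaces the Green's function decomposition and the layer-potential inversion by a single boundary-regularity statement for one \(\epsilon\)-independent problem. It also gives more:
\begin{itemize}
\item \(|r_\epsilon|\le D\epsilon|\log\epsilon|\) holds uniformly on all of \(M_\epsilon\), rather than pointwise or on compact \(K\Subset M^\circ\).
\item \(\tilde U\) is independent of \(\epsilon\).
\item The argument runs directly on \(\partial M_\epsilon=\{\rho=\epsilon\}\) with \(\bar g\), so you never need to reconcile this level set with the level sets \(\{\rho_0=\epsilon\}\) on which the paper's layer-potential computations actually take place.
\end{itemize}
In exchange, the paper's route never needs boundary asymptotics of the limiting solution. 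It also yields a flux estimate on \(\partial_\nu w_\epsilon\) within the layer-potential framework that is standard for narrow-escape problems and that the paper reuses for gas giant metrics.
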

\subsection{Outline of the paper}
In \cref{sec:bdry_greens} we provide some geometric background for our situation of a two-dimensional compact connected manifold with boundary, give some information on the Green's function \(G(x;y) \in \mathcal{D}'(M\times M )\) and provide useful lemmas with references for the rest of the paper. We complete  the proof of \cref{thm:hyperbolic} in \cref{sec:proof-main} using the information given in \cref{sec:bdry_greens}. This proof is extended in  \cref{sec:ggg}, where we first provide some additional information on gas giant geometries, and then prove \cref{thm:ggg} alluded to in \cref{sec:intro} using techniques from \cref{sec:bdry_greens,sec:proof-main}. In \cref{sec:blowups} we show how the limiting case of \cref{thm:ggg} gives the same results as \cref{thm:hyperbolic}. Finally,  we supplement the theorems in this paper with Monte Carlo simulations in \cref{sec:numerics}.
\section{Boundary coordinates systems and Green's functions}\label{sec:bdry_greens}
\subsection{Boundary coordinate systems}
We begin this discussion on the compact Riemannian manifold with boundary \((M,\partial M , \bar g )\).
For \(\delta>0 \) small we define the manifold \(M_\delta \) by 
\begin{equation}
    M_\delta := \{x\in M : \rho(x)>\delta\} \ .
\end{equation}
On \(M\setminus M_\delta\) we have the coordinate system given by \(x\mapsto (\rho(x),\theta(x))\), where \(\theta(x) \in \partial M \) is the closest point on the boundary to \(x\in M\setminus M_\delta\). Observe that if \(v\in T_{(0,\theta(x))}M\) with \(|v|_g = 1 \) and \(v\perp_{\bar g }T_{(0,\theta(x))}\partial M \), then \(x = \exp_{(0,\theta(x))}(\rho(x) v)\). 
The metric in this coordinate systems is thus given by 
\begin{equation}\label{eq: bar g in coordinates}
    \bar g = \dd \rho^2 + \tilde \beta(\rho,\theta)\dd \theta^2
\end{equation}
for some smooth function \(\tilde \beta\). Since \(\dim M = 2 \), all Riemannian metrics are conformally related \cite[Chapter 7]{lee2018riemann}. In particular, \(\bar g  \) is conformally related to a metric \(\bar g_0 \), such that \((M\setminus M_\delta,\bar g_0 )\) is isometric to the annulus \( (\DD\setminus \DD_{1-\bar \delta},g_\Euc)\) with inner radius \(1-\bar \delta \) and outer radius 1 for some \(0<\bar \delta <1\) under some isomey \(\Phi: M\setminus M_\delta \to \mathbb D \setminus \DD_{1- \bar \delta }\),
with \(\Phi(\partial M) = \partial \DD  \). 

In particular, since \(\bar g \) and \(\bar g_0 \) are conformally related, there is a smooth function \(b(x) \in C^\infty(M)\)  and  a positive smooth function \(\beta(x) = e^{b(x)}\in C^\infty(M)\)  such that 
\begin{equation}
    \bar g = \beta(x)\cdot \bar g_0 = e^{-b}\cdot \bar g_0 \ .  
\end{equation}

\begin{definition}\label{def:rho0} 
    We define the boundary defining function  \(\rho_0 = \Phi^*(\varrho) \in C^\infty(M\setminus M_\delta)\) to  be the pullback under the isometric diffeomorphism \(\Phi \) of the boundary defining function \(\varrho\in C^\infty(\DD\setminus \DD_{1-\bar \delta})\) given by 
    \begin{equation}
        \varrho:\DD\setminus \DD_{1-\bar \delta}\to \RR_+: (r,\vartheta)\mapsto \frac{1}{2}(1-r^2) \ ,
    \end{equation}
    where \((r,\vartheta)\) are polar coordinates on \(\DD \).
\end{definition}
\begin{lemma}
    The boundary defining function \(\varrho \) vanishes on the unit circle \(\partial \DD = \{(r,\vartheta):r = 1 \}\). Furthermore, the Riemannian manifold \((\DD\setminus \DD_{1-\bar \delta}, g_{\Euc}/\varrho^2 )\) has constant scalar curvature of \(-2 \).
\end{lemma}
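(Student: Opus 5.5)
The plan is a direct computation in polar coordinates. The first claim is immediate: on \(\partial\DD\) we have \(r=1\), so \(\varrho = \tfrac12(1-1)=0\). For completeness one also notes that \(d\varrho = -r\,\dd r\neq 0\) at \(r=1\), so \(\varrho\) is a genuine boundary defining function on the annulus. It is positive on the interior because \(r<1\) there.

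For the curvature I will use that, in dimension two, a conformal metric \(g = e^{2\varphi}g_\Euc\) has Gaussian curvature
\begin{equation*}
K_g = -e^{-2\varphi}\Delta_{\Euc}\varphi,
\end{equation*}
and scalar curvature \(2K_g\). Here \(e^{2\varphi} = \varrho^{-2}\), so \(\varphi = -\log\varrho = -\log\tfrac{1-r^2}{2}\), which is a radial function. I compute \(\Delta_\Euc\varphi = \varphi'' + r^{-1}\varphi'\) in polar coordinates. First, \(\varphi' = \frac{2r}{1-r^2}\). Then \(r^{-1}(r\varphi')' = r^{-1}\bigl(\frac{2r^2}{1-r^2}\bigr)'\). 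Since \(\bigl(\frac{2r^2}{1-r^2}\bigr)' = \frac{4r}{(1-r^2)^2}\), this gives
\begin{equation*}
\Delta_\Euc\varphi = \frac{4}{(1-r^2)^2}.
\end{equation*}
Next, \(e^{-2\varphi} = \varrho^2 = \tfrac14(1-r^2)^2\). Multiplying, \(K_g = -\tfrac14(1-r^2)^2\cdot\frac{4}{(1-r^2)^2} = -1\), so the scalar curvature is \(-2\).

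This is just the Poincaré disc metric \(4|dz|^2/(1-|z|^2)^2\) restricted to the annulus, which provides an independent check. There is no real obstacle. The only points needing care are the sign convention for \(\Delta\), which is the paper's non-positive convention (the same as \(\partial_r^2 + r^{-1}\partial_r\) here), and the factor of 2 relating scalar and Gaussian curvature in dimension two.
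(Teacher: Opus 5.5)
Your proof is correct and follows essentially the same route as the paper: both apply the two-dimensional conformal change formula to \(g_{\Euc}/\varrho^2\) over the flat background and reduce everything to \(\Delta_{g_{\Euc}}\log\varrho = -1/\varrho^2\). The only difference is bookkeeping: the paper writes the scalar-curvature version of the formula directly (citing Lee's Theorem 7.30), whereas you compute the Gaussian curvature \(K=-1\) and double it. Your additional checks that \(\dd\varrho\neq 0\) on \(\partial\DD\) and that the metric is exactly the Poincaré metric are sound but not needed.
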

\begin{proof}
    The first statement of the lemma follows directly from the definition of \(\varrho \).

    For the second statement, we compute the scalar curvature of conformally related surfaces using \cite[Theorem 7.30]{lee2018riemann}. We have 
    \begin{equation}
        R_{g_{\Euc}/\varrho^2} = e^{2\log \varrho} (R_{g_{\Euc}} +2 \Delta_{g_{\Euc}}\log \varrho ) = \varrho^2\left( 0-\frac{2}{\varrho^2} \right) = -2
    \end{equation}
    since the scalar curvature \(R_{g_\Euc} = 0 \).
\end{proof}
\begin{lemma}
    The metric \(g_{\Euc }\) on \(\DD\setminus \DD_{1-\bar \delta }\) can be expressed as
    \begin{equation}
        g_{\Euc} = \dd x^2 + \dd y^2 = \dd r^2 +r ^2 \dd \vartheta^2  = \frac{1}{1-2\varrho}\dd \varrho^2 + (1-2\varrho)\dd \vartheta^2 = \left( \sum_{n=0}^\infty (2\varrho)^n \right) \dd \varrho^2 + (1-2\varrho)\dd \vartheta^2 \ . 
    \end{equation}
\end{lemma}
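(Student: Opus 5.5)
This is a direct change of variables from polar to \((\varrho,\vartheta)\) coordinates, so the plan is to compute \(\dd r\) in terms of \(\dd\varrho\) and substitute. The first two equalities are standard. In Cartesian coordinates \(x = r\cos\vartheta\), \(y = r\sin\vartheta\) we have
\[
\dd x = \cos\vartheta\,\dd r - r\sin\vartheta\,\dd\vartheta, \qquad \dd y = \sin\vartheta\,\dd r + r\cos\vartheta\,\dd\vartheta .
\]
Squaring and adding, the cross terms cancel and \(\cos^2\vartheta+\sin^2\vartheta=1\) gives \(\dd x^2+\dd y^2 = \dd r^2 + r^2\dd\vartheta^2\). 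This is valid on the annulus \(\DD\setminus\DD_{1-\bar\delta}\), where \(r>0\), so polar coordinates are a genuine chart there, apart from the usual angular periodicity.

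For the third equality we use \(\varrho = \tfrac12(1-r^2)\) from \cref{def:rho0}. This gives \(r^2 = 1-2\varrho\) and \(\dd\varrho = -r\,\dd r\). Since \(r\geq 1-\bar\delta>0\), we may divide to get \(\dd r = -\dd\varrho/r\), hence
\[
\dd r^2 = \frac{\dd\varrho^2}{r^2} = \frac{\dd\varrho^2}{1-2\varrho}.
\]
Substituting \(r^2=1-2\varrho\) into the angular term gives \((1-2\varrho)\,\dd\vartheta^2\). Because \(r\mapsto\varrho\) is a strictly decreasing smooth map with nonvanishing derivative on \([1-\bar\delta,1]\), the pair \((\varrho,\vartheta)\) is a valid coordinate system and the pullback computation is legitimate.

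The last equality is the geometric series \(\frac{1}{1-2\varrho}=\sum_{n\ge0}(2\varrho)^n\). The only point requiring care, and the one I would check explicitly, is convergence: we need \(|2\varrho|<1\) on the annulus. Indeed \(2\varrho = 1-r^2\in[0,\,1-(1-\bar\delta)^2]\), and \(1-(1-\bar\delta)^2<1\) since \(0<\bar\delta<1\). So the series converges, uniformly on the closed annulus, and the coefficient of \(\dd\varrho^2\) equals its Neumann expansion. The lemma is then proved.
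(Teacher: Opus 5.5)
Your proof is correct and follows the same route as the paper: polar coordinates, the chain rule via \(\dd\varrho = -r\,\dd r\) and \(r^2 = 1-2\varrho\), and the geometric series. Your convergence check is slightly sharper than the paper's. You bound the series argument directly, \(2\varrho = 1-r^2 \le 1-(1-\bar\delta)^2 < 1\) for every \(\bar\delta\in(0,1)\), whereas the paper bounds \(\varrho\) itself and appeals to \(\bar\delta\) being small.
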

\begin{proof}
    The first equality  gives the Euclidean metric in polar coordinates. The second equality then follows by the chain rule for differentiation. The final equation follows by the geometric series, since the image of \(\varrho\) is contained in the interval \([0,\bar \delta-\frac{1}{2}\bar \delta^2]\subset [0,1-\delta')\) for some \(\delta'>0 \) for sufficiently small \(\bar \delta\).
\end{proof}
\begin{remark}
    The manifold \((\DD\setminus \DD_{1-\bar \delta }, g_{\Euc}/\varrho^2)\) will be our model manifold throughout this paper. We can explicitly compute on this manifold, and using the isometric diffeomorphism \(\Phi \), we can therefore compute on \(M\setminus M_\delta \).
\end{remark}
\begin{lemma}\label{lem: bar g0 in local coordinates}
    Near the boundary \(\bar g_0 \) has the form 
    \begin{equation}
        \bar g_0 =(1+2\rho_0 + \mathcal{O}(\rho_0^2)) \dd \rho_0^2 +(1-2\rho_0) \dd \theta^2  \ ,
    \end{equation}
    where \(\rho_0 \) is the boundary defining function from \cref{def:rho0}.
\end{lemma}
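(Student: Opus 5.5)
The statement is a transport of the explicit computation in the preceding lemma through the isometry \(\Phi\). By \cref{def:rho0}, \(\rho_0 = \Phi^*\varrho\). Since \(\Phi:(M\setminus M_\delta,\bar g_0)\to(\DD\setminus\DD_{1-\bar\delta},g_\Euc)\) is an isometry, we have \(\bar g_0 = \Phi^* g_\Euc\). We may also take \(\theta\) to be the pullback of the angular coordinate \(\vartheta\), so \(\theta=\Phi^*\vartheta\). With this choice of coordinates \((\rho_0,\theta)\) near \(\partial M\), pulling back the identity of the previous lemma gives
\begin{equation*}
    \bar g_0 = \Phi^*g_\Euc = \frac{1}{1-2\rho_0}\dd\rho_0^2 + (1-2\rho_0)\dd\theta^2 = \Big(\sum_{n\geq 0}(2\rho_0)^n\Big)\dd\rho_0^2 + (1-2\rho_0)\dd\theta^2 .
\end{equation*}
This uses only that pullback commutes with products and differentials: \(\Phi^*\dd\varrho = \dd\rho_0\) and \(\Phi^*\dd\vartheta = \dd\theta\).

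The remaining step is to identify the error term. The series \(\sum_{n\ge0}(2\rho_0)^n = 1+2\rho_0+\sum_{n\ge 2}(2\rho_0)^n\). On the collar, \(\rho_0\) takes values in \([0,\bar\delta-\tfrac12\bar\delta^2]\), so \(2\rho_0\le 2\bar\delta<1\) for small \(\bar\delta\). Hence the tail is bounded by
\[
4\rho_0^2\sum_{m\ge0}(2\bar\delta)^m = \frac{4\rho_0^2}{1-2\bar\delta} = \mathcal{O}(\rho_0^2),
\]
with a constant uniform in \(\theta\). This gives the stated form \((1+2\rho_0+\mathcal{O}(\rho_0^2))\dd\rho_0^2 + (1-2\rho_0)\dd\theta^2\). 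Moreover, the \(\dd\theta^2\) coefficient is exactly \(1-2\rho_0\) with no error, and there is no cross term.

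The only real obstacle is a consistency issue in the coordinates. The lemma uses the \(\theta\) from the geodesic normal coordinates of \(\bar g\), whereas we are now working with \(\bar g_0\). I would handle this by declaring that near the boundary we use the coordinates \((\rho_0,\theta)=\Phi^*(\varrho,\vartheta)\). Since \(\Phi(\partial M)=\partial\DD\), the function \(\theta\) restricts on \(\partial M\) to an angular coordinate, and \(\vartheta\) is constant along the radial lines. These radial lines are \(g_\Euc\)-geodesics normal to \(\partial\DD\), so \(\theta\) is again the closest-boundary-point coordinate, now for \(\bar g_0\). Thus no cross term \(\dd\rho_0\,\dd\theta\) appears and the formula holds as stated.
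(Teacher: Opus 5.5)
Your proposal is correct and takes the same route as the paper: pull back the explicit expression \(g_{\Euc} = \frac{1}{1-2\varrho}\dd\varrho^2 + (1-2\varrho)\dd\vartheta^2\) through the isometry \(\Phi\), with \(\theta\) read as \(\Phi^*\vartheta\), and then expand \(1/(1-2\rho_0)\) as a geometric series. Your explicit tail bound, and your point that \(\theta\) must be the pulled-back angle rather than the \(\bar g\)-normal coordinate, make precise what the paper's terse proof leaves implicit.
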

\begin{proof}
    Since \(\Phi(\partial M) = \partial \DD\), the pullback \(\Phi^*(\dd \vartheta^2 )\) of part of  the metric \(g_{\Euc }\) tangent to the  boundary \(\partial \DD \) remains tangent to the pullback \(\Phi^*\partial \DD = \partial M \) of the boundary under the isometric diffeomorphism \(\Phi\).
    Hence, we have 
    \begin{equation}
        \bar g_0 = \Phi^* g_{\Euc} =  \Phi^*\left( \frac{1}{1-2\varrho}\dd \varrho^2 + (1-2\varrho)\dd \vartheta^2 \right) = (1+2\rho_0 + \mathcal{O}(\rho_0^2)) \dd \rho_0^2 + (1-2\rho_0)\dd \theta^2 \ .
    \end{equation}
\end{proof}
\begin{lemma}\label{lem: constants beta}
    There is a positive function \(c(\theta) \in C^\infty(\partial M) \) such that near the boundary \(\partial M \),  the boundary defining functions are related by
    \begin{equation}
        \rho_0 = c(\theta)\rho + \mathcal{O}(\rho^2) \ .
        \label{eq:rho relation}
    \end{equation} 
    Furthermore, the factor \(c^2(\theta)\beta(0,\theta)  = 1 \) for each point on \(\partial M \).
\end{lemma}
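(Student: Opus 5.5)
The plan is to compare the two boundary defining functions and the two background metrics through their restrictions to the boundary, using the explicit form of \(\bar g_0\) from \cref{lem: bar g0 in local coordinates} and the coordinate form \eqref{eq: bar g in coordinates} of \(\bar g\).

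For the first statement, both \(\rho\) and \(\rho_0\) are smooth boundary defining functions on the collar \(M\setminus M_\delta\): they vanish exactly on \(\partial M\) and have nonvanishing differential there. I will work in the coordinates \((\rho,\theta)\) and Taylor expand \(\rho_0\) in \(\rho\) at fixed \(\theta\):
\[
\rho_0(\rho,\theta)=\rho_0(0,\theta)+\partial_\rho\rho_0(0,\theta)\,\rho+\mathcal{O}(\rho^2).
\]
The constant term vanishes because \(\rho_0=0\) on \(\partial M\). Set \(c(\theta):=\partial_\rho\rho_0(0,\theta)\). This function is smooth on \(\partial M\). It is nonzero because \(\dd\rho_0\neq 0\) on \(\partial M\) while \(\dd\rho_0\) annihilates \(T\partial M\), so \(\dd\rho_0=c\,\dd\rho\) along the boundary. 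It is positive because both functions are positive in the interior. The remainder is uniform in \(\theta\) by compactness of \(\partial M\). This gives \eqref{eq:rho relation}.

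For the identity \(c^2(\theta)\beta(0,\theta)=1\), I will compute \(|\dd\rho_0|^2\) on \(\partial M\) in two ways.
\begin{itemize}
\item With respect to \(\bar g_0\): \cref{lem: bar g0 in local coordinates} gives \(\bar g_0^{-1}(\dd\rho_0,\dd\rho_0)=(1+2\rho_0+\mathcal{O}(\rho_0^2))^{-1}\), which equals \(1\) on \(\partial M\). This is just \(|\dd\varrho|_{g_\Euc}=r=1\) on \(\partial\DD\), pulled back by the isometry \(\Phi\).
\item With respect to \(\bar g\): from \eqref{eq: bar g in coordinates}, \(|\dd\rho|_{\bar g}=1\). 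Along \(\partial M\) we have \(\dd\rho_0=c\,\dd\rho\), so \(|\dd\rho_0|^2_{\bar g}=c^2(\theta)\).
\end{itemize}
Since \(\bar g=\beta\,\bar g_0\), the dual metrics satisfy \(\bar g^{-1}=\beta^{-1}\bar g_0^{-1}\). Hence on \(\partial M\)
\[
c^2(\theta)=|\dd\rho_0|^2_{\bar g}=\beta(0,\theta)^{-1}|\dd\rho_0|^2_{\bar g_0}=\beta(0,\theta)^{-1},
\]
which is the claimed identity.

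The main point needing care is the relation \(\dd\rho_0=c\,\dd\rho\) on \(\partial M\), as opposed to merely \(\partial_\rho\rho_0=c\). Both differentials are conormal to \(\partial M\): \(\rho\) and \(\rho_0\) are constant along the boundary, so their \(\dd\theta\)-components vanish there. I will use this fact explicitly.

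A second point is the convention: the paper writes both \(\beta\cdot\bar g_0\) and \(e^{-b}\bar g_0\). I will use \(\bar g=\beta\,\bar g_0\), which is the form consistent with the stated conclusion \(c^2\beta=1\).
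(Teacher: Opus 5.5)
Your proof is correct and uses the same underlying idea as the paper's: at \(\partial M\) the normal directions of \(\rho\) and \(\rho_0\) are colinear, each is unit-normalised in its own metric, and the conformal factor converts between the two norms. The difference is in the objects used. The paper works with the coordinate vector fields \(\partial_\rho\), \(\partial_{\rho_0}\) and asserts \(\partial_{\rho_0}=c(\theta)\,\partial_\rho\) on \(\partial M\). You work with the differentials \(\dd\rho\), \(\dd\rho_0\) and the dual metrics.

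Your formulation is the more robust one. A differential depends only on the function, so \(\dd\rho_0=c\,\dd\rho\) on \(\partial M\) follows directly from the Taylor expansion, with the same \(c\) as in \eqref{eq:rho relation}. This also makes explicit the link between the expansion and the normal comparison, which the paper leaves implicit. By contrast, \(\partial_{\rho_0}\) depends on the angular coordinate of the \(\Phi\)-chart, which is not the \(\theta\) of the \((\rho,\theta)\)-chart. Pairing with \(\dd\rho_0\) shows that, with your \(c\), one actually has \(\partial_{\rho_0}=c^{-1}\partial_\rho\) on \(\partial M\). In the paper this is offset by the conversion \(|v|_{\bar g_0}=\beta^{1/2}|v|_{\bar g}\), which corresponds to \(\bar g_0=\beta\,\bar g\). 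The two discrepancies cancel, so the paper's final identity agrees with yours.

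Your explicit choice \(\bar g=\beta\,\bar g_0\) is the right convention. Under it, \(\Delta_{\bar g_0}=\beta\,\Delta_{\bar g}\) gives the leading coefficient \(c^2\beta(0,\theta)=1\) used in the reduction \eqref{eq:bvp_simplified}.
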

\begin{proof}
    From the local description of the metrics \(\bar g \) in \cref{eq: bar g in coordinates}, and \(\bar g_0 \) in \cref{lem: bar g0 in local coordinates} near the boundary \(\partial M \), it follows that the vector fields 
    \[
        \partial\rho |_{\rho=\rho_0=0}:= \frac{\partial}{\partial \rho}\bigg|_{\rho=\rho_0=0}\quad \text{and}\quad \partial\rho_0 |_{\rho=\rho_0=0}:=\frac{\partial}{\partial \rho_0}\bigg|_{\rho=\rho_0=0}
    \]
    are both perpendicular to the boundary \(\partial M \). Hence, since at every point the smooth vector fields are co-linear, it follows that there is a non-zero smooth function \(c(\theta) \in C^\infty (\partial M) \) such that 
    \begin{equation*}
        \partial\rho_0 = c(\theta)\partial \rho
    \end{equation*}
    for every point on the boundary \(\partial M \). Positivity follows from the fact that both smooth vector fields are inward pointing. Restricted to the boundary \(\partial M \), from the local descriptions we have 
    \begin{equation*}
        1 = |\partial \rho|_{\bar g} = |\partial \rho_0|_{\bar g_0}.
    \end{equation*}
    Thus we now compute restricted to the boundary \(\partial M \),
    \begin{equation*}
        1 = |\partial \rho_0|_{\bar g_0} = |c(\theta) \partial \rho|_{\bar g_0} = c(\theta) \beta(0,\theta)^{1/2}|\partial \rho|_{\bar g} = c(\theta) \beta(0,\theta)^{1/2} \ ,
    \end{equation*}
    from which \(c^2(\theta)\beta(0,\theta)  = 1 \) follows.
\end{proof}
Hence, the boundary value problem \eqref{eq:bvp} simplifies to 
\begin{equation}
    \Delta_{\bar g_0 } u_\epsilon = -\frac{c^2\beta(\rho_0,\theta)}{\rho_0^2}  = -\frac{1 + \beta^{(1)}(\theta)\rho_0 + \beta^{(2)}(\rho_0,\theta)}{\rho_0^2}\ .
    \label{eq:bvp_simplified}
\end{equation}
\subsection{Green's functions}
In this subsection we provide, for the  metric \(\bar{g}_0  \), the Green's functions, and give a decomposition into a principal and  remainder parts near the boundary \(\partial M \).

There is a unique Green's Dirichlet function 
\(G_0(x;y) \in \mathcal{D}'(M\times M ) \) with the properties \cite[Theorem 4.17]{Aubin1982}
\begin{equation}
    \begin{cases}
        \Delta_{\bar{g }_0,y} G_0(x;y) = \delta_x(y) \quad &\text{for \(x,y\in M^\circ \)}\\
        G_0(x;y)|_{y\in \partial M}  = 0\\ 
        G_0(x;y) \in \RR \quad &\text{for \(x,y\in \overline M \)}\\
        G_0(x;y) = G_0(y;x)\ .
        \label{eq:greens0_conditions}
    \end{cases} 
\end{equation}
Since away from the diagonal \({\rm diag}(M\times M )\) the Green's function \(G_0(x;y)\) is smooth, we have the following lemma.
\begin{lemma}\label{lem:G-0-epsilon}
    For each fixed \(x_0\in M^\circ\), there is a positive constant \(C_{x_0} \) such that 
    \begin{equation}
        |G_0(x_0;y)| \leq C_{x_0} \rho_0(y)
    \end{equation}
    for \(y\in M^\circ\) such that \(\rho_0(y) \ll \rho_0(x_0)\). 
    Moreover, for any \(K\Subset M^\circ  \) compact, there is a positive  constant \(C_K \) such that 
    \begin{equation}
        |G_0(x;y)|\leq C_K \rho_0(y)
    \end{equation}
    for \( x\in K , y \in M ^\circ \) with \(\rho_0(y) \ll \min_{x\in K }\rho_0(x)\).
\end{lemma}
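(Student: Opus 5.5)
The plan is to combine smoothness of \(G_0\) off the diagonal with the vanishing of \(G_0\) on \(\partial M\), and then apply a first-order Taylor expansion in the normal variable \(\rho_0\).

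\textbf{Pointwise bound.} Fix \(x_0\in M^\circ\) and choose \(\delta_0>0\) with \(\delta_0<\rho_0(x_0)/2\). Take \(\delta_0\) small enough that the collar \(N:=\{y:\rho_0(y)\le \delta_0\}\) lies inside \(M\setminus M_\delta\). On \(N\) the coordinates \((\rho_0,\theta)\in[0,\delta_0]\times\partial M\) from \cref{def:rho0} are available. Since \(x_0\notin N\), the function \(y\mapsto G_0(x_0;y)\) is harmonic for \(\Delta_{\bar g_0}\) on a neighbourhood of \(N\) in \(M^\circ\) and vanishes on \(\partial M\). By elliptic boundary regularity for the Dirichlet problem (\(\partial M\) and \(\bar g_0\) are smooth), it is smooth up to the boundary on \(N\). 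Hence \(C_{x_0}:=\sup_N|\partial_{\rho_0}G_0(x_0;\cdot)|\) is finite, as a continuous function on a compact set. Writing \(y=(\rho_0,\theta)\) and using \(G_0(x_0;(0,\theta))=0\), the mean value theorem gives
\[
|G_0(x_0;(\rho_0,\theta))|=\left|\int_0^{\rho_0}\partial_s G_0(x_0;(s,\theta))\,\dd s\right|\le C_{x_0}\rho_0 .
\]
This holds for all \(y\) with \(\rho_0(y)\le\delta_0\), which is the meaning of \(\rho_0(y)\ll\rho_0(x_0)\).

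\textbf{Uniform bound.} For compact \(K\Subset M^\circ\), set \(m=\min_K\rho_0>0\) (after extending \(\rho_0\) to a positive function in the interior) and choose \(\delta_0<m/2\). The function \((x,y)\mapsto G_0(x;y)\) is smooth on \(K\times N\). This set is compact and disjoint from the diagonal, and smoothness there includes up to \(y\in\partial M\). We get this by elliptic regularity with parameters, or by symmetry \(G_0(x;y)=G_0(y;x)\) together with joint smoothness of the Dirichlet Green's function away from the diagonal up to the boundary. Then \(C_K:=\sup_{K\times N}|\partial_{\rho_0(y)}G_0(x;y)|<\infty\), and the same Taylor argument, now uniform in \(x\in K\), gives \(|G_0(x;y)|\le C_K\rho_0(y)\).

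\textbf{Main obstacle.} The only nontrivial input is regularity of \(G_0\) up to \(\partial M\), jointly in \((x,y)\) away from the diagonal; interior smoothness alone is not enough. I would justify it as follows. Write \(G_0=\Gamma-H\), where \(\Gamma\) is a local parametrix (the logarithmic singularity \(\frac{1}{2\pi}\log d_{\bar g_0}(x,y)\) times a cutoff near \(x\)). Then \(H\) solves a Dirichlet problem with smooth data depending smoothly on \(x\in K\). Standard Schauder or \(H^s\) boundary estimates give \(H\in C^\infty(K\times M)\), and \(\Gamma\) is smooth near \(\partial M\) because of the cutoff.
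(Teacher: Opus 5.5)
Your proposal is correct and follows the paper's argument. Both use the vanishing of \(G_0(x;\cdot)\) on \(\partial M\), smoothness of \(G_0\) away from the diagonal up to the boundary, and the integral form of Taylor's theorem in \(\rho_0\). Your route to \(C_K\) through joint smoothness on the compact set \(K\times N\) is more careful than the paper's \(C_K=\max_{x\in K}C_x\), which tacitly relies on exactly that continuity in \(x\).
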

\begin{proof}
    For each fixed \(x_0\in M^\circ \) and \(y\in \partial M \) we have 
    \begin{equation}
        G_0(x_0,y)|_{y\in \partial M} = 0 \ .
    \end{equation}
    Now, because \(G_0(x_0;y) \) is smooth for each fixed \(x_0\in M ^\circ \) and  \(y\in \partial M  \), by applying Taylor's theorem in its integral from to the function \(G_0(x_0;\cdot)\), 
    it follows that at \(y = (\rho_0(y),\theta(y) )\)
    \[
        G_0(x_0;y) = \rho_0(y) \int_0^1 \frac{\partial G_0(x_0;t\rho_0(y),\theta(y))}{\partial \rho_0(y)}\dd t  \ .
    \]
    Hence, there is a positive constant \(C_{x_0}\) such that 
    \[
        |G_0(x_0;y)| \leq C_{x_0} \rho_0(y) 
    \]
    for  \(y\in M^\circ\) such that \(\rho_0(y) \ll \rho_0(x_0)\). 
    If \(K \Subset M^\circ  \) is compact, we can take \(C_K = \max_{x\in K } C_x \), then 
    \[
        |G_0(x;y)|\leq C_K \rho_0(y)
    \]
    for \( x\in K , y \in M ^\circ \) with \(\rho_0(y) \ll \min_{x\in K }\rho_0(x)\).
\end{proof}
We will now give a decomposition of the Green's function \(G_0(x;y) \) into a sum of a \emph{principal Green's function} \(G_{0,\prin}(x;y)\) and \emph{remainder Green's function} \(G_{0,\rem}(x;y)\).
\begin{definition}\label{def:Greens_principal}
    Assume without loss of generality that \((M\setminus M_\delta,\bar g_0) \) can be extended to some manifold \((\widetilde M\setminus M_\delta ,\bar g_0 )\) isometric to \((\RR^2\setminus \DD_{1-\bar \delta},g_{\Euc})\), and let \(\Phi:\widetilde M\setminus M_\delta \to \RR^2\setminus \DD_{1-\bar \delta}\) be this isometry. 

    For \(x,y\in M\setminus M_\delta\) denote 
    \begin{equation}
        |x|: = |\Phi(x)|\ , \quad |x-y|:= |\Phi(x)-\Phi(y)| \ ,\quad \text{and}\quad  \frac{x}{|x|^2} := \Phi^{-1}\left(\frac{\Phi(x)}{|\Phi(x)|^2}\right)\in \widetilde M\setminus M_\delta \ ,
    \end{equation}
    then define the distribution \(G_{0,\prin}(x;y)\in \mathcal{D}'(M\times M) \) by 
    \begin{equation}
        G_{0,\prin}(x;y) = \frac{\kappa(x;y)}{2\pi}\left[ \log |x-y| - \log \left||x| \cdot \left( y-\frac{x }{|x|^2} \right)\right| \right] \ ,
    \end{equation}
    where \(\kappa(x;y)= \kappa_1(x)\cdot \kappa_1(y)\cdot \kappa_0(x;y)\) and \(\kappa_1\in C^\infty(M),\kappa_0(x;y)\in C^\infty(M\times M) \) are smooth cutoff functions on \(M \) with the properties that 
    \begin{equation}\label{eq:kappa-properties}
        \begin{cases}
            \kappa_1(x) \equiv 1 \quad &\text{if \(\rho_0(x)\leq \delta/4\)}\\
            \kappa_1(x) \equiv 0 &\text{if \(\rho_0(x) \geq \delta/3\)}
        \end{cases}
        \quad \text{and}\quad 
        \begin{cases}
            \kappa_0(x;y) \equiv 1 \quad &\text{if \(d_{\bar g_0}(x;y)<\delta/4\)}\\
            \kappa_0(x;y) \equiv 0 &\text{if \(d_{\bar g_0}(x;y)>\delta/3\)}\\
            \kappa_0(x;y)\equiv \kappa_0(y;x)&\text{for all \(x,y\in M \).}
        \end{cases} 
    \end{equation}
\end{definition} 
\begin{remark}\label{rem:Greens disc}
    The distribution 
    \begin{equation}
        G_{0,\prin,\A}(z_1;z_2) = \frac{1}{2\pi}\left[ \log |z_1-z_2| - \log \left| |z_1|\cdot \left( z_2-\frac{z_1}{|z_1|^2} \right)\right| \right]
    \end{equation}
    is the symmetric Green's function on the unit disc, with the properties of \eqref{eq:greens0_conditions} for the Euclidean metric \(g_{\Euc}\)~\cite[Section 2.2 Equation (41)]{evans2010pde}. We therefore have 
    \begin{align}
        \begin{cases}
            \Delta_{g_\Euc,z_2} G_{0,\prin,\A} (z_1,z_2) &= \delta_{z_1}(z_2)\ ,\\
            G_{0,\prin,\A}(z_1,z_2) &= G_{0,\prin,\A}(z_2,z_1)\ ,\\
            G_{0,\prin,\A}(z_1,z_2)|_{|z_2| = 1} &= 0 \ .
        \end{cases}
    \end{align}
\end{remark}
\begin{lemma}\label{lem:G0prin-pullback}
    We can  write 
    \begin{equation}
        G_{0,\prin}(x;y) =\kappa(x;y) (\Phi^*G_{0,\prin,\A})(x;y) = \kappa(x;y) G_{0,\prin,\A}(\Phi(x);\Phi(y)) \ .
    \end{equation}
\end{lemma}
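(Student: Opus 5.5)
The plan is to unwind the definitions and check that the notation introduced in \cref{def:Greens_principal} is exactly the pullback under \(\Phi\) of the expressions in \cref{rem:Greens disc}. By definition, for \(x,y\in M\setminus M_\delta\) we have \(|x| = |\Phi(x)|\) and \(|x-y| = |\Phi(x)-\Phi(y)|\), so the first logarithm \(\log|x-y|\) equals \(\log|\Phi(x)-\Phi(y)|\). The only term needing care is the second logarithm, because the expression \(y - x/|x|^2\) involves the reflected point \(x/|x|^2 = \Phi^{-1}(\Phi(x)/|\Phi(x)|^2)\). This point lies in the extension \(\widetilde M\setminus M_\delta\) rather than in \(M\), which is why the extension to \(\RR^2\setminus \DD_{1-\bar\delta}\) was introduced.

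Using the convention \(|x-y| := |\Phi(x)-\Phi(y)|\) on the extended manifold, together with \(\Phi\circ\Phi^{-1} = \mathrm{id}\), we get
\begin{equation*}
\left| |x|\cdot\left(y-\frac{x}{|x|^2}\right)\right| = |\Phi(x)|\cdot\left|\Phi(y)-\frac{\Phi(x)}{|\Phi(x)|^2}\right| = \left| |\Phi(x)|\cdot\left(\Phi(y)-\frac{\Phi(x)}{|\Phi(x)|^2}\right)\right| .
\end{equation*}
This is the second logarithm's argument in \(G_{0,\prin,\A}(\Phi(x);\Phi(y))\). Hence the bracket in \(G_{0,\prin}\), divided by \(2\pi\), equals \(G_{0,\prin,\A}(\Phi(x);\Phi(y)) = (\Phi^*G_{0,\prin,\A})(x;y)\), where the pullback on the product is taken under \(\Phi\times\Phi\). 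Multiplying by \(\kappa(x;y)\) gives the claimed identity.

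The remaining point is the domain. \(\Phi\) is only defined on \(\widetilde M\setminus M_\delta\), while \(G_{0,\prin}\) is a distribution on \(M\times M\). We will use the support of \(\kappa\): since \(\kappa_1\) vanishes unless \(\rho_0\leq\delta/3\), the product \(\kappa\) vanishes outside a region where both \(x\) and \(y\) lie in the collar. We read this collar as within the domain of \(\Phi\), adjusting the choice of \(\delta\) against the cutoff levels if necessary. We then interpret \(\kappa\,\Phi^*G_{0,\prin,\A}\) as extended by zero, which is consistent because the formula for \(G_{0,\prin}\) also vanishes there.

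Finally, the identity must hold distributionally near the diagonal, where the logarithm is singular. It is locally integrable, and \(\Phi\) is a diffeomorphism, so pullback of \(L^1_{\mathrm{loc}}\) functions agrees with the pointwise identity almost everywhere. No step should be a real obstacle. The only subtlety is bookkeeping the support of \(\kappa\) against the domain of \(\Phi\), and I would handle it as just described.
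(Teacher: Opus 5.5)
Your proposal is correct and takes the same approach as the paper, whose proof is a single line observing that \(G_{0,\prin}\) is defined directly from \(G_{0,\prin,\A}\) composed with \(\Phi\); your term-by-term unwinding of the definitions is that argument in detail. Your extra bookkeeping is care the paper leaves implicit: that the support of \(\kappa\) lies where \(\Phi\) is defined, that the reflected point lies in the extension, and that the logarithms are locally integrable.
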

\begin{proof}
    In \cref{def:Greens_principal}, \(G_{0,\prin}\) is exactly defined using  the distribution \(G_{0,\prin,\A}\).
\end{proof}
\begin{lemma}
    The distribution \(G_{0,\prin}(x;y) \) is symmetric, i.e.\ \(G_{0,\prin}(y;x) = G_{0,\prin}(x;y) \). 
\end{lemma}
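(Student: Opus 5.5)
By \cref{lem:G0prin-pullback}, $G_{0,\prin}(x;y)=\kappa(x;y)\,G_{0,\prin,\A}(\Phi(x);\Phi(y))$. We therefore check symmetry of the two factors separately and multiply.

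\textbf{The cutoff.} Since $\kappa(x;y)=\kappa_1(x)\kappa_1(y)\kappa_0(x;y)$, the product $\kappa_1(x)\kappa_1(y)$ is symmetric because multiplication commutes. The factor $\kappa_0$ is symmetric by the last condition in \eqref{eq:kappa-properties}. Hence $\kappa(y;x)=\kappa(x;y)$.

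\textbf{The disc Green's function.} Symmetry of $G_{0,\prin,\A}$ is recorded in \cref{rem:Greens disc}, citing Evans. I would also verify it directly, since that is the only substantive step. For $z_1,z_2\in\RR^2\setminus\{0\}$,
\begin{equation*}
\Bigl| |z_1|\bigl(z_2-\tfrac{z_1}{|z_1|^2}\bigr)\Bigr|^2
= |z_1|^2|z_2|^2 - 2\, z_1\cdot z_2 + 1 .
\end{equation*}
This expression is invariant under $z_1\leftrightarrow z_2$. The term $\log|z_1-z_2|$ is obviously symmetric, so $G_{0,\prin,\A}(z_1;z_2)=G_{0,\prin,\A}(z_2;z_1)$ for all $z_1\neq z_2$ with $z_1,z_2\neq 0$. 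On the support of $\kappa$ both points lie in $M\setminus M_\delta$, so their images under $\Phi$ lie in the annulus and are nonzero. The formula is therefore well defined there.

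\textbf{Conclusion.} Combining the two symmetries,
\[
G_{0,\prin}(y;x)=\kappa(y;x)\,G_{0,\prin,\A}(\Phi(y);\Phi(x))=\kappa(x;y)\,G_{0,\prin,\A}(\Phi(x);\Phi(y))=G_{0,\prin}(x;y).
\]
This holds pointwise off the diagonal. Since $G_{0,\prin}$ is locally integrable (it has only a logarithmic singularity), pointwise equality almost everywhere gives equality as distributions on $M\times M$.

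The only mild obstacle is the algebraic identity for the image-charge term; it is handled by expanding the squared norm as above. The remaining points, well-definedness away from $0$ and the passage to distributions, are routine.
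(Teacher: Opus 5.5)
Your proof is correct and follows the same route as the paper: use symmetry of \(\kappa\), together with the symmetry of \(G_{0,\prin,\A}\) transported through \(\Phi\) via \cref{lem:G0prin-pullback}. Your explicit expansion of the image-charge term and your remark on local integrability (for passing from pointwise to distributional equality) fill in details that the paper leaves to \cref{rem:Greens disc}.
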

\begin{proof}
    The smooth cut-off function \(\kappa(x;y) \) is symmetric by definition. On \(\supp \kappa(x;y) \) we therefore have 
    \begin{equation}
        \begin{split}
            G_{0,\prin}(y;x) &=  {\kappa(y;x)} G_{0,\prin,\A}(\Phi(y);\Phi(x))  = {\kappa(x;y)} G_{0,\prin,\A}(\Phi(x);\Phi(y)) = G_{0,\prin}(x;y) \ .
        \end{split}
    \end{equation}
\end{proof}
\begin{proposition}\label{prop:greens decomposition}
    We have the following decomposition of the Green's function \(G_0(x;y) \)
    \begin{equation}
        G_0(x;y) = G_{0,\prin }(x;y) + G_{0,\rem }(x;y) \ ,
    \end{equation}
    where \(G_{0,\rem }(x;y) \) satisfies 
    \begin{enumerate}
        \item \label{prop_part:greens decomp 1} \(G_{0,\rem }(x;y) \) is a smooth symmetric function for \(x,y\in M\setminus M_{\delta/4}\), satisfying \(G_{0,\rem}(x;y)= 0 \) whenever \(y\in \partial M \), and 
        \item \label{prop_part:greens decomp 2} if \(0<\epsilon\ll \delta \), then  there there are constants \(C_0,C_1,C_2\) such that 
        \begin{equation*}
            \sup_{z\in \partial M_{\epsilon}}\left|\partial_{\nu_z} G_{0,\rem}(z;y)\right| \leq C_0 \cdot \rho_0(y) + C_1 (\epsilon + \rho(y))^2+ C_2 (\epsilon + \rho(y))^3 
        \end{equation*}
        holds for all \(y \in M\setminus M_{\delta/4}\) and \(\epsilon>0\).
    \end{enumerate}
\end{proposition}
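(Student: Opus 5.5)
Define \(G_{0,\rem} := G_0 - G_{0,\prin}\) and read off both properties from the equation it satisfies. Symmetry is immediate, since \(G_0\) is symmetric by \eqref{eq:greens0_conditions} and \(G_{0,\prin}\) by the preceding lemma. For \(x,y\in M\setminus M_{\delta/4}\) with \(d_{\bar g_0}(x;y)<\delta/4\) we have \(\kappa\equiv 1\). By \cref{lem:G0prin-pullback} and \cref{rem:Greens disc}, together with the fact that \(\Phi\) is an isometry for \(\bar g_0\), we get \(\Delta_{\bar g_0,y}G_{0,\prin}(x;y)=\delta_x(y)\) near the diagonal. Hence the delta singularities cancel, and
\[
\Delta_{\bar g_0,y}G_{0,\rem}(x;y) = -\big[\Delta_{\bar g_0,y},\kappa(x;\cdot)\big]\big(\Phi^*G_{0,\prin,\A}\big)(x;y),
\]
which is supported where \(\nabla\kappa\neq0\), i.e.\ away from the diagonal. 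There \(\Phi^*G_{0,\prin,\A}\) is smooth, so the right-hand side is smooth jointly in \((x,y)\). Elliptic regularity for the Dirichlet problem, applied in \(y\) with \(x\) as a smooth parameter and using symmetry to get regularity in \(x\), shows \(G_{0,\rem}\) is smooth on \((M\setminus M_{\delta/4})^2\), including up to the boundary. For the boundary values: \(G_0(x;y)=0\) for \(y\in\partial M\), and \(G_{0,\prin,\A}(z_1;z_2)=0\) for \(|z_2|=1\), with \(\Phi(\partial M)=\partial\DD\). So \(G_{0,\rem}(x;y)=0\) for \(y\in\partial M\), which proves part \ref{prop_part:greens decomp 1}.

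For part \ref{prop_part:greens decomp 2}, fix \(y\in M\setminus M_{\delta/4}\) and \(z\in\partial M_\epsilon\). By part \ref{prop_part:greens decomp 1} and symmetry, \(G_{0,\rem}(z;y)=G_{0,\rem}(y;z)\) is smooth in \((z,y)\) on a compact set and vanishes when \(y\in\partial M\). I will Taylor expand \(\partial_{\nu_z}G_{0,\rem}(z;y)\) in the boundary coordinates \((\rho_0(z),\theta(z))\) and \((\rho_0(y),\theta(y))\) about \(\rho_0(y)=0\). Since \(\partial_{\nu_z}G_{0,\rem}(z;y)|_{\rho_0(y)=0}=0\), the integral-form Taylor argument of \cref{lem:G-0-epsilon} gives \(|\partial_{\nu_z}G_{0,\rem}(z;y)|\le C_0\rho_0(y)\), with \(C_0\) a sup of the mixed derivative \(\partial_{\rho_0(y)}\partial_{\nu_z}G_{0,\rem}\) over the compact set. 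This bound is uniform in \(\epsilon\) because \(\partial M_\epsilon\subset M\setminus M_{\delta/4}\) for \(\epsilon\ll\delta\), and \(\nu_z\) is a smooth unit vector field there, close to \(-\partial_{\rho_0}\) up to rescaling.

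The main obstacle is that near the diagonal the cancellation is exact only for the model disc with metric \(\bar g_0\) and exact inversion \(x\mapsto x/|x|^2\). The terms \(C_1(\epsilon+\rho(y))^2+C_2(\epsilon+\rho(y))^3\) suggest the authors expand \(\partial_{\nu_z}\) of the log terms directly. These come from the discrepancy between \(\rho\) and \(\rho_0\) (\cref{lem: constants beta}: \(\rho_0=c\rho+\mathcal{O}(\rho^2)\)), and from the normal of \(\partial M_\epsilon=\{\rho=\epsilon\}\) not being exactly \(\partial_{\rho_0}\). Expanding \(\nu_z=-(1+\mathcal{O}(\epsilon))\partial_{\rho_0}+\mathcal{O}(\epsilon)\partial_\theta\) and \(\rho_0(z)=c\epsilon+\mathcal{O}(\epsilon^2)\) produces polynomial corrections in \(\epsilon+\rho(y)\) of orders two and three. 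I would first try to prove the cleaner bound \(C_0\rho_0(y)\), which already implies the stated one. The remaining work is to check that the smoothness in part \ref{prop_part:greens decomp 1} really holds up to the corner where \(z\) and \(y\) both approach \(\partial M\) near each other. Here I would use that the reflected point \(x/|x|^2\) lies outside \(\DD\), so the image-charge term is smooth for \(y\in M\) whenever \(x\in M^\circ\), with the singularity only approaching as both points hit the boundary. I would control that region by the explicit formula for \(\partial_{\nu}G_{0,\prin,\A}\) (the Poisson kernel), which accounts for the higher-order terms in \((\epsilon+\rho(y))\).
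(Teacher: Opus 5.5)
Your argument is correct. Part 1 is essentially the paper's proof. You subtract, and the delta masses cancel because $\kappa(x;x)=1$ and $\Phi$ is an exact $\bar g_0$-isometry. This gives $\Delta_{\bar g_0,y}G_{0,\rem}=-F$, with $F$ supported where $d_y\kappa\neq 0$, and elliptic regularity finishes it.

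Part 2 takes a shorter route than the paper. The paper does a joint second-order Taylor expansion of $G_{0,\rem}$ in $(\rho_0(x),\rho_0(y))$ at the corner and uses the Dirichlet condition to kill $A_{00},A_{10},A_{20}$. It then obtains $\partial_{\nu_z}G_{0,\rem}=-A_{11}(\theta(z),\theta(y))\rho_0(y)+\partial_{\nu_z}R_3$ and bounds the cubic remainder crudely by $C_1(\epsilon+\rho(y))^2+C_2(\epsilon+\rho(y))^3$. You instead observe that $\partial_{\nu_z}G_{0,\rem}(z;y)$ already vanishes for $y\in\partial M$. Applying the one-variable integral Taylor formula in $\rho_0(y)$, as in \cref{lem:G-0-epsilon}, you get $|\partial_{\nu_z}G_{0,\rem}(z;y)|\le C_0\rho_0(y)$ uniformly in $\epsilon$, i.e.\ the statement with $C_1=C_2=0$. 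This shows the extra terms come only from not using that $R_3$ itself vanishes at $\rho_0(y)=0$. Your bound is stronger and is exactly what is used later, in \cref{lem:normal_error_remainder} and for $\|N^\#_{\epsilon,\rem}\|=\mathcal O(\epsilon)$. The paper's expansion additionally identifies the leading coefficient $A_{11}$, but that is never used.

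Your final paragraph should be dropped rather than pursued, for three reasons:
\begin{itemize}
\item The near-diagonal cancellation is exact, since $\Phi$ is by construction an isometry, so there is no model-versus-manifold discrepancy.
\item The paper's quadratic and cubic terms come from the Taylor remainder, not from $\rho$ versus $\rho_0$ or a tilted normal.
\item No Poisson-kernel analysis is needed at the corner. The image-charge singularity there belongs to $G_{0,\prin}$, not to $G_{0,\rem}$.
\end{itemize}
What you should state explicitly in part 1 instead is that $F(x;\cdot)\in C^\infty(\overline M)$ depends smoothly on $x$ all the way up to $\rho_0(x)=0$. This holds because on $\supp d_y\kappa$ the point $y$ stays at distance $\gtrsim\delta$ from $x$, hence also from $x/|x|^2$, since $|x-x/|x|^2|=\mathcal O(\rho_0(x))$. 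Then $G_{0,\rem}(x;y)=-\int_M G_0(y;w)F(x;w)\vol_{\bar g_0}(w)$ is jointly smooth on $\{\rho_0(x)\le c\delta\}\times\overline M$, across the diagonal and at the corner. Joint smoothness, not merely separate smoothness as symmetry would give, is what makes your constant $C_0$ finite and uniform in $\epsilon$.
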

\begin{proof}
    We have 
    \begin{equation}
        \begin{split}
            \Delta_{\bar g_0,y}G_{0,\prin}(x;y) &= \frac{1}{2\pi}\bigg[
                \kappa(x;y) \Delta_{\bar g_0,y}\left( \log |x-y| - \log \left||x| \cdot \left( y-\frac{x }{|x|^2} \right) \right|\right)+ F(x;y)
            \bigg]\ .
            \label{eq:G0prin-laplace}
        \end{split}
    \end{equation}
    Where
    \begin{equation}
        \begin{split}
            F(x;y) &:= \Delta_{\bar g_0,y}\kappa(x;y)\left[  \log |x-y| - \log \left||x| \cdot \left(y-\frac{x }{|x|^2}\right)\right|  \right] \\
            &\qquad + 2\inner{d_{y}\kappa(x;y)}{d_{y}\left( \log |x-y| - \log \left||x| \cdot \left(y-\frac{x }{|x|^2}\right)\right| \right)}_{\bar g_0}.
        \end{split}
    \end{equation}
    By \cref{lem:G0prin-pullback}, it follows that 
    \begin{equation}
        \begin{split}
            \frac{\kappa(x;y)}{2\pi}&\Delta_{\bar g_0,y}\left(\log |x-y| -\log \left||x| \cdot \left( y-\frac{x }{|x|^2} \right) \right|\right) = \kappa(x;y)\Delta_{\Phi^*g_{\Euc}}(\Phi^*G_{0,\prin,\A})(x;y) \\
            &=\kappa(x;y)\Phi^*( \Delta_{g_{\Euc}}G_{0,\prin,\A})(x;y) = \kappa(x;y)\delta_{x}(y) \ ,
        \end{split}
    \end{equation}
    since we assumed that \(\bar g_0 \) is isometric to \(g_{\Euc} \). So \eqref{eq:G0prin-laplace} becomes 
    \begin{eqnarray}\label{eq: G0prin-laplace simplified}  
        \Delta_{\bar g_0,y}G_{0,\prin}(x;y) = \kappa(x;y)\delta_x(y) + F(x;y).
    \end{eqnarray}
    By construction, we have that the supports of \(d_y \kappa(x;y) \) and \(\Delta_{\bar g_0} \kappa(x;y)\) are contained in the union
    \begin{equation}
        \supp d_{y}\kappa(x;y), \supp \Delta_{\bar g_0,y} \kappa(x;y) \subset (M_{\delta/4}\setminus M_{\delta/2})\times (M_{\delta/4}\setminus M_{\delta/2})  \cup \{\delta/4 <d_{\bar g_0}(x;y) <\delta/2\} \ .
    \end{equation}
    So we have that \(F(x;y)\) is smooth for \(x,y\in M\setminus M_{\delta/4}\).
    If we now let 
    \begin{equation}
        G_{0,\rem} (x;y) := G_{0}(x;y)- G_{0,\prin}(x;y)
    \end{equation}
    then for \(x,y\in M\setminus M_{\delta/4 }\) we have that \eqref{eq: G0prin-laplace simplified} becomes 
    \begin{equation}
        \Delta_{\bar g_0,y} G_{0,\rem}(x;y) = \delta_x(y) - \kappa(x;y) \delta_x(y) - F(x;y) = -F(x;y)\ ,
    \end{equation}
    since \(\kappa(x;y) \equiv 1 \) for \(x,y\in M \setminus M_{\delta/4 }\).
    By elliptic regularity \cite{taylor2011pde2}, it therefore follows that \(G_{0,\rem}(x;y) \) is smooth on \(M\setminus M_{\delta/4}\). Furthermore, since both \(G_0(x;y) \) and \(G_{0,\prin}(x;y) \) are symmetric and zero on the boundary \(\partial M \), it follows that \(G_{0,\rem}(y;x)=G_{0,\rem}(x;y)\) and \(G(x;y)|_{y\in \partial M}=0\). This concludes \cref{prop_part:greens decomp 1}.

    Finally, since \(G_{0,\rem} (x;y)\) is jointly smooth near the boundary
    we can take a joint Taylor polynomial around \(\rho_0(x) = \rho_0(y) = 0 \), such that  there are smooth functions \(A_{ij }(\theta(x),\theta(y))\) such that 
    \begin{equation}
        \begin{split}
            G_{0,\rem}(x;y) &=  A_{00}(\theta(x),\theta(y))+ A_{10}(\theta(x),\theta(y))\rho_0(x)+ A_{01}(\theta(x),\theta(y))\rho_0(y) \\
            &\quad + A_{11}(\theta(x),\theta(y))\rho_0(x)\rho_0(y) + A_{20}(\theta(x),\theta(y) ) \rho_0(x)^2+ A_{02}(\theta(x),\theta(y) ) \rho_0(y)^2\\
            &\quad +  R_3(x;y)
        \end{split}
    \end{equation}
    with the remainder \(R_3 (x;y) \) given by
    \begin{equation}\label{eq: remainder in taylor}
        \begin{split}
            R_3(x;y) &= \sum_{i+j = 3} \frac{3(\rho_0(x))^i (\rho_0(y))^j}{i!j!}\int_0^1 (1-t)^{2}\frac{\partial^3 G_{0,\rem}(t\rho_0(x),\theta(x),t\rho_0(y),\theta(y))}{(\partial \rho_0(x))^i (\partial \rho_0(y))^j}\dd t    
        \end{split}
    \end{equation}

    Since \(G_{0,\rem}(x;y) |_{y\in \partial M}\equiv 0 \), it follows that the functions \(A_{00}, A_{10} \) are identically 0. Furthermore, also because \(G_{0,\rem }(x;y)|_{y\in \partial M }\equiv 0 \) for all \(x \in M^\circ \), we find that \(\partial_{\nu_z}G_{0,\rem }(z;y)|_{\substack{y\in \partial M\\ z\in \partial M_\epsilon}}\equiv 0\), and that the functions \( A_{10}= A_{20}  \) are identically 0 .

    Then, since the normal derivative \(\partial_\nu \) at the boundary \(\partial M_\epsilon\) in the local coordinates \((\rho_0,\theta) \) satisfy the relation \(\partial_\nu = -\partial_{\rho_0}|_{\rho_0 = \epsilon }\), we find that 
    \begin{equation}\label{eq:del G_0 rem with error term }
        \partial_{\nu_z}G_{0,\rem}(z;y)|_{z\in \partial M_\epsilon} = -A_{11}(\theta(z),\theta(y)) \rho_0(y) + \partial_{\nu_z}R_3(z;y)|_{z\in \partial M_\epsilon}\ .
    \end{equation}
    We can now bound the function \(A_{11}(\theta(x),\theta(y))\) uniformly on \(\partial M \times \partial M \) by a constant  \(C_0>0  \) such that  
    \[
        \sup_{\theta(x),\theta(y)\in \partial M }|A_{11}(\theta(x),\theta(y))|\leq C_0 \ . 
    \] 
    Furthermore,
    \begin{equation*}
        \begin{split}
            \bigg|\partial_{\nu_z} R_3(z;y)|_{z\in \partial M_\epsilon}\bigg| \leq 
            & \sum_{i+j= 3}\bigg|\frac{3 i (\epsilon)^{i-1} (\rho_0(y))^j}{i!j!}\int_0^1 (1-t)^{2}\frac{\partial^3 G_{0,\rem}(t\epsilon,\theta(z),t\rho_0(y),\theta(y))}{(\partial \rho_0(x))^i (\partial \rho_0(y))^j}\dd t     \bigg|\\
            &+\sum_{i+j=3}\bigg|\frac{3(\epsilon)^i (\rho_0(y))^j}{i!j!}\int_0^1 (1-t)^{2}\frac{\partial^4 G_{0,\rem}(t\epsilon,\theta(z),t\rho_0(y),\theta(y))}{(\partial \rho_0(x))^{i+1} (\partial \rho_0(y))^j}\dd t     \bigg|\\
            &\leq C_1 (\epsilon + \rho(y))^2 + C_2 (\epsilon + \rho(y))^3
        \end{split}
    \end{equation*}
    {by bounding the derivatives uniformly on the right hand side over \(M\setminus M_{\delta/4}\times M\setminus M_{\delta/4}\) due to the joint smoothness of the function \(G_{0,\rem }(x;y)\).} 
    Then, it follows that  there are constants \(C_0,C_1,C_2\) such that 
    \begin{equation*}
        \sup_{z\in \partial M_{\epsilon}}\left|\partial_{\nu_z} G_{0,\rem}(z;y)\right| \leq C_0 \cdot \rho_0(y) + C_1 (\epsilon + \rho(y))^2+ C_2 (\epsilon + \rho(y))^3 
    \end{equation*}
    holds for all \(y \in M\setminus M_{\delta/4}\) and \(\epsilon>0\).
    This concludes \cref{prop_part:greens decomp 2}. 
\end{proof}
In \cref{sec:proof-main} we will use these decompositions to explicitly compute some integrals near the boundary.
\subsection{Sobolev spaces}
In this subsection we give definitions of the Sobolev spaces of interest in this paper. 
\begin{definition}\label{def:sobolev}
    For \(k\in \ZZ^+,(\Omega,\bar g)\subset ( \overline M ,\bar g)\) an open domain and \(\bar g \) a smooth metric on \(\overline \Omega\), we define the Sobolev spaces as the completions with respect to the \(H^k(\Omega, \bar g)\)-norm
    \begin{equation}
        \begin{split}
            H^{k}(\Omega,\bar g) &:= \overline{\{u\in C^\infty(\Omega)\}}^{\|\cdot\|_{H^k(\Omega, \bar g )}}\\
            H^{k,p}_0(\Omega,\bar g) &:= \overline{\{u\in C^\infty_0(\Omega)\}}^{\|\cdot\|_{H^k(\Omega, \bar g )}} \ ,
        \end{split}
    \end{equation}
    where the \(H^k(\Omega, \bar g )\) norm is given by 
    \begin{equation*}
        \|f\|_{H^k(\Omega, \bar g )} = \left( \sum_{i=0}^k \|D^i f \|_{L^2(\Omega, \bar g )}^2 \right)^{1/2}
    \end{equation*}
    where we interpret the \(D^i f \) as the \(i\)\textsuperscript{th} weak derivative of \(f\in C^\infty (\Omega)\). 
    
    We define the localised Sobolev space 
    \begin{equation}
        H^{k}_{\mathrm{loc}}(\Omega,\bar g):= \{u \in \mathcal{D}'(\overline{\Omega}): \varphi u \in H^{k}(\Omega, \bar g ) \quad \text{for all \(\varphi\in C^\infty_0(\Omega)\)}\} \ .
    \end{equation}
    For \(k\in \ZZ^+ \), the space \(H^{-k}(\Omega, \bar g )\) is defined as the formal dual to \(H^{k}_0(\Omega, \bar g  )\).

    For non-integer values \(l>0 \), we define the Sobolev spaces \(H^{l}(\Omega , \bar g )\) using the complex interpolation method \cite[Theorem 6.4.5 part (7)]{Bergh1976interpolation}.
\end{definition}
This definition depends  implicitly on the metric (and therefore also on the \(L^2 \)-norms). 
By \cite[Proposition 2.2]{hebey1999sobolev}, it follows that if \(\bar g,\bar h \) are two smooth metrics for \(\overline{\Omega} \), then the Sobolev spaces \(H^{l}(\Omega,\bar g) = H^{l}(\Omega,\bar h) \), since the metrics \(\bar g, \) and \(\bar h  \) provide equivalent norms on \(\Omega\). 

Next, the Poincaré inequality also provides an equivalent norm for \(H^{1}_0(\Omega, \bar g )\). Indeed, we see for \(\varphi\in H^{1}_0(\Omega, \bar g )\) that 
\begin{equation}
    \label{eq:poincare-inequality}
    \|D \varphi \|_{L^2(\Omega, \bar g )} \leq \left( \|\varphi\|_{L^2(\Omega, \bar g )}^2 + \|D \varphi\|_{L^2(\Omega, \bar g )}^2 \right)^{1/2} 
    \leq C(\Omega) \|D\varphi\|_{L^2(\Omega, \bar g )} \ .
\end{equation}
The second inequality is the Poincaré inequality. Since  \(\|\varphi\|_{H^1(\Omega,\bar g)}:= (\|\varphi\|_{L^2(\Omega, \bar g )}^2 + \|D\varphi\|^2_{L^2(\Omega, \bar g )})^{1/2}\) the conclusion follows.  

The following lemmas show that (weak) solutions to elliptic boundary value problems have the ``expected'' regularity.
\begin{proposition}\label{prop:elliptic_regularity_p-type_sobolev}
    Let  \(\Omega \subset \overline M \) a  bounded domain with  smooth boundary and let  \(\bar g \) be a smooth metric on \(\overline{\Omega}\), then 
    if \(f \in H^{-1}(\Omega,\bar g)\), the boundary value problem
    \begin{equation}
        \begin{cases}
            \Delta_{\bar g } u = f \\
            u |_{\partial \Omega} = 0 
        \end{cases}
    \end{equation}
    has a unique solution \(u \in H^{1}_0(\Omega,\bar g)\) and there is a constant \(C> 0 \) such that 
    \begin{equation}
        \| u \|_{H^{1}(\Omega,\bar g)} \leq C \|f\|_{H^{-1}(\Omega,\bar g)}
    \end{equation}
\end{proposition}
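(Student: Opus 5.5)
This is the standard Lax--Milgram argument: $H^{-1}$ is the dual of $H^1_0$, and the Poincaré inequality \eqref{eq:poincare-inequality} makes the Dirichlet form coercive.

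\textbf{Weak formulation.} Define on $H^1_0(\Omega,\bar g)$ the bilinear form
\[
B(u,\varphi) := \int_\Omega \inner{du}{d\varphi}_{\bar g}\,\vol_{\bar g} ,
\]
and the functional $\ell(\varphi) := -\langle f,\varphi\rangle$, where $\langle\cdot,\cdot\rangle$ is the pairing between $H^{-1}(\Omega,\bar g)$ and $H^1_0(\Omega,\bar g)$ from \cref{def:sobolev}. Green's formula, applied for smooth $u$ and $\varphi\in C^\infty_0(\Omega)$, shows that $\Delta_{\bar g}u=f$ holds in the weak sense exactly when $B(u,\varphi)=\ell(\varphi)$ for all $\varphi\in C^\infty_0(\Omega)$. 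The boundary term vanishes because $\varphi$ has compact support. By density the identity then extends to all $\varphi\in H^1_0$. The boundary condition $u|_{\partial\Omega}=0$ is encoded by requiring $u\in H^1_0(\Omega,\bar g)$.

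\textbf{Checking the Lax--Milgram hypotheses.} Continuity follows from Cauchy--Schwarz:
\[
|B(u,\varphi)|\le \|Du\|_{L^2}\|D\varphi\|_{L^2}\le \|u\|_{H^1}\|\varphi\|_{H^1}.
\]
Coercivity comes from the second inequality in \eqref{eq:poincare-inequality}:
\[
B(u,u)=\|Du\|_{L^2}^2\ge C(\Omega)^{-2}\|u\|_{H^1}^2 .
\]
This is the one point where we need $\Omega$ to be bounded, since the Poincaré constant depends on it; the smoothness of $\bar g$ on $\overline\Omega$ ensures the norms are well defined and equivalent to the Euclidean ones by \cite[Proposition 2.2]{hebey1999sobolev}. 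Finally, $\ell$ is bounded on $H^1_0$ with $\|\ell\|\le\|f\|_{H^{-1}}$, directly from the definition of $H^{-1}$ as the dual space.

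\textbf{Conclusion.} The Lax--Milgram theorem now gives a unique $u\in H^1_0(\Omega,\bar g)$ with $B(u,\varphi)=\ell(\varphi)$ for all $\varphi\in H^1_0(\Omega,\bar g)$. For the estimate, test with $\varphi=u$:
\[
C(\Omega)^{-2}\|u\|_{H^1}^2\le B(u,u)=|\ell(u)|\le\|f\|_{H^{-1}}\|u\|_{H^1},
\]
which yields $\|u\|_{H^1}\le C(\Omega)^2\|f\|_{H^{-1}}$.

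Uniqueness can also be seen directly. If two solutions exist, their difference $w$ satisfies $B(w,w)=0$, and coercivity forces $w=0$.
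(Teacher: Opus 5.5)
Your argument is correct, but it takes a more direct route than the paper. The paper gives no argument beyond a citation. It invokes \cite[Theorem 4.10]{mclean2000strongly}, a Fredholm-alternative type result for strongly elliptic boundary value problems based on G{\aa}rding's inequality. It then uses the maximum principle to exclude nontrivial solutions of the homogeneous Dirichlet problem, and that is what turns the alternative into existence, uniqueness and the $H^{-1}\to H^{1}$ bound.

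You instead use the fact that, for the pure Laplacian with Dirichlet data, the Dirichlet form is genuinely coercive on $H^1_0(\Omega,\bar g)$ by the Poincar\'e inequality \eqref{eq:poincare-inequality}. Lax--Milgram then delivers existence, uniqueness and the estimate in one step, with the explicit constant $C=C(\Omega)^2$. Your energy argument ($B(w,w)=0$ forcing $w=0$) replaces the maximum principle. This is arguably the more natural uniqueness proof for merely $H^1_0$ weak solutions, since a maximum principle in that setting needs a weak form whose proof is essentially your coercivity estimate.

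What the paper's route buys is generality. It would also cover operators with lower-order terms, where only G{\aa}rding's inequality holds and uniqueness has to be supplied separately. For the statement as given, your proof is more elementary and self-contained, since it uses only the Poincar\'e inequality the paper already records.
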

This is a consequence of the boundary elliptic regularity \cite[Theorem 4.10]{mclean2000strongly} together with the maximum principle for the Laplacian.

\begin{proposition}\label{prop:local-regularity}
    Let  \(\Omega \subset \overline M \) a bounded domain with  smooth boundary, then for   and \(s\in \RR \),  if \(f\in H^{s-2}_{\mathrm{loc }}(\Omega, \bar g )\), and if \(u \) satisfies \(\Delta_{\bar g} u = f\) for \(x\in \Omega \), then \(u \in H^{s}_{\mathrm{loc }}(\Omega, \bar g )\).
\end{proposition}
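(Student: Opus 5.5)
This is interior elliptic regularity for the Laplace--Beltrami operator of a smooth metric. I would reduce it to the standard interior estimate for second-order elliptic operators with smooth coefficients on \(\RR^2\) by localising and then bootstrapping.

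\textbf{Localisation.} Fix \(\varphi\in C^\infty_0(\Omega)\). We must show \(\varphi u\in H^s(\Omega,\bar g)\). By a partition of unity subordinate to finitely many coordinate charts covering \(\supp\varphi\), we may assume \(\supp\varphi\) lies in one chart \(U\). In that chart
\[
\Delta_{\bar g}=\bar g^{ij}\partial_i\partial_j+b^j\partial_j ,
\]
with smooth coefficients and \((\bar g^{ij})\) uniformly positive definite on a neighbourhood of \(\supp\varphi\). By \cite[Proposition 2.2]{hebey1999sobolev}, the Sobolev norms on compact subsets of \(U\) are equivalent to the Euclidean ones. The problem is therefore a statement about an elliptic operator \(P\) with smooth coefficients on an open set in \(\RR^2\).

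\textbf{Parametrix.} Choose a properly supported parametrix \(Q\in\Psi^{-2}\) for \(P\) with \(QP=I-R\), where \(R\in\Psi^{-\infty}\). This is standard; see \cite{taylor2011pde2}. Take a cutoff \(\psi\in C^\infty_0(U)\) with \(\psi\equiv 1\) near \(\supp\varphi\). Then
\[
\varphi u=\varphi Q P(\psi u)+\varphi R(\psi u)=\varphi Q(\psi f)+\varphi Q([P,\psi]u)+\varphi R(\psi u).
\]
We treat the three terms in turn.
\begin{itemize}
\item The first term lies in \(H^s\), since \(\psi f\in H^{s-2}\) and \(Q\) gains two derivatives.
\item The commutator \([P,\psi]\) is a first-order operator supported where \(d\psi\ne0\), which is disjoint from \(\supp\varphi\). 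Since \(Q\) is pseudolocal, \(\varphi Q[P,\psi]\) has a smooth kernel, so this term is smooth.
\item The last term is smooth because \(R\) is smoothing.
\end{itemize}
We need \(u\) to be a distribution on \(\Omega\) for these compositions to make sense. This is implicit in the statement, where \(u\in\mathcal D'\).

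\textbf{Main obstacle.} The parametrix argument needs \(u\) to lie in some \(H^{t}_{\mathrm{loc}}\) for the smoothing and pseudolocal terms to be meaningful. Every distribution lies locally in some \(H^{t}_{\mathrm{loc}}\), but this must be handled with care.

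If one prefers to avoid pseudodifferential operators, the alternative is Nirenberg's difference-quotient method. Starting from \(\psi u\in H^{t}\), the identity \(P(\psi u)=\psi f+[P,\psi]u\) together with the Gårding-type estimate
\[
\|w\|_{H^{t+1}}\le C\bigl(\|Pw\|_{H^{t-1}}+\|w\|_{H^{t}}\bigr)\quad\text{for } w \text{ supported in a fixed compact set}
\]
gives \(\psi' u\in H^{t+1}\) on a slightly smaller cutoff \(\psi'\). Iterating finitely many times over nested cutoffs reaches \(H^s\); this is the bootstrap step. Non-integer \(s\) is then handled by interpolation, consistent with \cref{def:sobolev}.

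The remaining care point is the case of negative \(s\). Since \(\Omega\) is smooth and we only work on compact subsets of the interior, no boundary terms arise.
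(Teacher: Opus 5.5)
Your proposal is correct. The paper gives no argument of its own here: it simply invokes \cite[Theorem 6.33]{folland1995pde} chart by chart. Folland's proof follows the scheme of your \emph{fallback} route. A local a priori elliptic estimate is turned into a regularity statement by regularising and controlling the commutator with the operator, via a Friedrichs-type estimate. This is then bootstrapped one derivative at a time, starting from the fact that every compactly supported distribution lies in some \(H^t\).

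Your \emph{primary} route, through a properly supported parametrix \(Q\in\Psi^{-2}\), is genuinely different. It reaches \(H^s\) in one step for every real \(s\), with no iteration and no nested cutoffs. The price is importing the pseudodifferential calculus: existence of parametrices, pseudolocality, and \(Q:H^{s-2}_{\mathrm{comp}}\to H^s_{\mathrm{loc}}\). Folland's route needs only Fourier-defined Sobolev spaces and differential operators.

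Two remarks on your commentary.

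\emph{The flagged obstacle.} The ``main obstacle'' you raise is not an obstacle for the parametrix route. The operators \(\varphi R\psi\) and \(\varphi Q[P,\psi]\) have smooth Schwartz kernels, so they map any compactly supported distribution into \(C^\infty\); knowing \(\psi u\in\mathcal{E}'\) suffices. The real care point sits in the bootstrap route. There, \(\|w\|_{H^{t+1}}\le C(\|Pw\|_{H^{t-1}}+\|w\|_{H^t})\) is only an a priori bound for \(w\) already in \(H^{t+1}\). It yields \(w\in H^{t+1}\) only after being applied to regularised versions of \(w\) (mollified or difference quotients), with the commutators controlled uniformly. That step is exactly the content of Folland's proof, and your sketch names it without carrying it out.

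\emph{Interpolation.} Interpolation is unnecessary, and for a fixed \(u\) it would in any case require a linear solution operator to interpolate. Use Fourier-defined norms on \(\RR^2\), which agree with the paper's spaces where those are defined. Then the estimate holds for all real \(t\). Starting the bootstrap at some \(t_0\) with \(s-t_0\in\ZZ\) therefore handles non-integer and negative \(s\) directly.
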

This is \cite[Theorem 6.33]{folland1995pde} for smooth manifolds.

\begin{proposition}[Sobolev embedding theorem]\label{lem:sobolev embedding}
    Let \(\overline M \) be a compact smooth Riemannian manifold of dimension \(n \) with smooth boundary \(\partial M \), then 
    the embedding \(H^{l}(M, \bar g ) \hookrightarrow C^s(\overline M )\) is continuous if \(l-n/2 > s\geq 0 \)  for integer \(s\).
\end{proposition}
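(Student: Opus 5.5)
The plan is the standard reduction to Euclidean space: a finite atlas with a partition of unity, an extension operator across the boundary, and Fourier analysis on \(\RR^n\).

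\emph{Step 1: localisation.} Since \(\overline M\) is compact, I will cover it by finitely many coordinate charts \(\psi_j: U_j\to \RR^n\). Interior charts map onto open subsets of \(\RR^n\). Boundary charts map onto relatively open subsets of the closed half-space \(\overline{\RR^n_+}=\{x_n\geq 0\}\), using for instance the boundary coordinates \((\rho,\theta)\) of \cref{sec:bdry_greens}. Fix a subordinate partition of unity \(\{\chi_j\}\). By the remark after \cref{def:sobolev} (cf.\ \cite[Proposition 2.2]{hebey1999sobolev}), the space \(H^l(M,\bar g)\) does not depend on the choice of smooth metric. Multiplication by \(\chi_j\) followed by pullback under \(\psi_j^{-1}\) is bounded from \(H^l(M,\bar g)\) into \(H^l(\RR^n)\), respectively into \(H^l(\RR^n_+)\). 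For integer \(l\) this follows from the Leibniz rule and the chain rule, since the Jacobians of the charts are smooth and bounded on the compact supports. For non-integer \(l\) it follows by complex interpolation between the neighbouring integers, which is how \cref{def:sobolev} defines these spaces. The same holds for \(C^s\) norms. It therefore suffices to prove \(\|v\|_{C^s}\leq C\|v\|_{H^l}\) for compactly supported \(v\) on \(\RR^n\) and on \(\overline{\RR^n_+}\).

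\emph{Step 2: extension across the boundary.} For the half-space pieces I will use a Seeley-type reflection
\[
Ev(x',x_n)=\sum_{k=1}^{N} a_k\, v(x',-k x_n)\quad\text{for } x_n<0,
\]
with the coefficients \(a_k\) chosen so that derivatives of order up to \(N-1\) match across \(x_n=0\). With \(N>l\), the operator \(E\) is bounded \(H^m(\RR^n_+)\to H^m(\RR^n)\) for every integer \(0\leq m\leq N-1\). Interpolation then gives boundedness on \(H^l\) for non-integer \(l\) as well. Since \(Ev=v\) on the half-space, this reduces everything to \(\RR^n\).

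\emph{Step 3: the estimate on \(\RR^n\).} For \(v\in C^\infty_c(\RR^n)\) and \(|\alpha|\leq s\), Fourier inversion and Cauchy--Schwarz give
\[
|\partial^\alpha v(x)|\leq \int |\xi|^{|\alpha|}|\hat v(\xi)|\,\dd\xi \leq \left(\int (1+|\xi|^2)^{|\alpha|-l}\,\dd \xi\right)^{1/2}\|v\|_{H^l}.
\]
The integral on the right is finite precisely because \(2(l-|\alpha|)>n\), which is where the hypothesis \(l-n/2>s\) enters. The continuity of each \(\partial^\alpha v\) comes from dominated convergence applied to the Fourier integral. By density of smooth functions (built into \cref{def:sobolev}), the bound extends to all of \(H^l\). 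Uniform limits of \(C^s\) functions are \(C^s\), so the limit lies in \(C^s(\overline M)\). Summing over the charts yields the continuous embedding.

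\emph{Main obstacle.} The delicate point is the non-integer case. For it I need the complex-interpolation spaces of \cref{def:sobolev} to coincide with the Fourier (Bessel potential) spaces, with norm \(\|(1+|\xi|^2)^{l/2}\hat v\|_{L^2}\), on \(\RR^n\). I also need the localisation and extension operators to interpolate correctly. My plan is to cite \cite[Theorem 6.4.5]{Bergh1976interpolation} for the identification \([H^{k_0},H^{k_1}]_\theta=H^{l}\) on \(\RR^n\). For the operators, I will use that each of them is simultaneously bounded on all integer orders up to \(N-1\), so interpolation applies to them directly.
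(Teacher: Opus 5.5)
Your proof is correct, but it does not follow the paper, which gives no argument and simply cites \cite[Theorem 2.30]{Aubin1982}.

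\textbf{Your route versus the citation.} You give the standard self-contained \(L^2\) proof: a finite atlas with a partition of unity, a finite higher-order reflection across the boundary, and the Plancherel/Cauchy--Schwarz bound \(|\partial^\alpha v(x)|\lesssim \bigl(\int(1+|\xi|^2)^{|\alpha|-l}\,\dd\xi\bigr)^{1/2}\|v\|_{H^l}\) on \(\RR^n\). This makes visible exactly where the hypothesis \(l-n/2>s\) enters. The citation is more economical and gives the \(L^q\) version at no extra cost. However, Aubin works with the integer-order spaces \(H^q_k\), while the statement here also allows fractional \(l\), defined by complex interpolation. Your interpolation step is precisely what covers that case. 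It combines the identification \([H^{k},H^{k+1}]_\theta(\RR^n)=H^{k+\theta}(\RR^n)\) from \cite[Theorem 6.4.5]{Bergh1976interpolation} with boundedness of the localisation and extension maps at every integer level. In the paper's only use of the result, part 4 of \cref{lem:w_eps_Laplacian}, \(l\) can be chosen to be an integer, so the citation suffices there.

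\textbf{A small bookkeeping correction.} With matching conditions of orders \(0,\dots,N-1\), the reflection \(E\) is bounded on \(H^m\) for all \(m\leq N\), not only for \(m\leq N-1\). With the range you state, the condition \(N>l\) would not let you interpolate up to \(\lceil l\rceil\); for example \(l=5/2\), \(N=3\) fails. Either use the sharper range or simply take \(N\geq l+2\).
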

For a proof see e.g.\ \cite[Theorem 2.30]{Aubin1982}.

We finish this section with Hardy's inequality. For a discussion of the best constants see \cite{marcus1998hardy}.
\begin{proposition}\label{prop:hardy-inequality}
    Let \(\Omega \subset M \) be an open domain and  let  \(\bar g \) be a smooth metric on \(\overline{\Omega}\), then there exists a constant \(c \) such that for \(u \in H^{1}_0(\Omega, \bar g )\) the inequality
    \begin{equation}
        \int_{\Omega} \left( \frac{|u(x)|}{\mathrm{dist}_{\bar g} (x,\partial \Omega) } \right)^2 \vol_{\bar g }( x) \leq c \int_{\Omega}|D u |^2 \vol_{\bar g}( x) 
    \end{equation}
    holds.
\end{proposition}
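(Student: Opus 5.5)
We treat \(\Omega\) as a bounded domain with smooth boundary, as in the preceding propositions. This assumption is needed: for arbitrary open sets the inequality can fail, so the plan below only covers that case. The strategy is to reduce to the one-dimensional Hardy inequality
\[
    \int_0^a \frac{|f(t)|^2}{t^2}\,\dd t \leq 4\int_0^a |f'(t)|^2\,\dd t, \qquad f\in C^\infty([0,a]),\ f(0)=0 ,
\]
applied along normal geodesics in a collar of \(\partial\Omega\). The interior is handled by the Poincaré inequality \eqref{eq:poincare-inequality}.

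\emph{Step 1: reduction to smooth functions.} By the definition of \(H^1_0(\Omega,\bar g)\) in \cref{def:sobolev}, \(C^\infty_0(\Omega)\) is dense. Suppose the inequality holds for \(u_n\in C^\infty_0(\Omega)\) with \(u_n\to u\) in \(H^1\). Passing to an a.e.\ convergent subsequence, Fatou's lemma gives the inequality for \(u\). So it suffices to take \(u\in C^\infty_0(\Omega)\).

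\emph{Step 2: the collar region.} Let \(d(x)=\mathrm{dist}_{\bar g}(x,\partial\Omega)\). Since \(\partial\Omega\) is smooth and compact, there is \(a>0\) such that on \(\{d<a\}\) the map \(x\mapsto (d(x),\theta(x))\) is a diffeomorphism onto \((0,a)\times\partial\Omega\). This is the same normal-coordinate construction as in \eqref{eq: bar g in coordinates}. In these coordinates
\[
    \bar g = \dd d^2 + \tilde\beta(d,\theta)\,\dd\theta^2 ,
\]
and \(\vol_{\bar g}=J(d,\theta)\,\dd d\,\dd\theta\) with \(0<c_1\le J\le c_2\) on the closed collar. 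Also \(|\partial_d u|\le |Du|_{\bar g}\).

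Take a cutoff \(\chi\) with \(\chi\equiv 1\) on \(\{d<a/2\}\) and \(\supp\chi\subset\{d<a\}\). For fixed \(\theta\), the function \(t\mapsto(\chi u)(t,\theta)\) vanishes at \(t=0\), so the one-dimensional inequality applies. Integrating in \(\theta\) and comparing \(J\) with constants gives
\[
    \int_\Omega \frac{|\chi u|^2}{d^2}\vol_{\bar g} \leq 4\frac{c_2}{c_1}\int_\Omega |D(\chi u)|^2\vol_{\bar g} \leq C\left(\int_\Omega|Du|^2\vol_{\bar g} + \int_\Omega |u|^2\vol_{\bar g}\right).
\]

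\emph{Step 3: the interior region.} On \(\supp(1-\chi)\) we have \(d\ge a/2\). Hence
\[
    \int_\Omega \frac{|(1-\chi)u|^2}{d^2}\vol_{\bar g} \le \frac{4}{a^2}\|u\|^2_{L^2(\Omega,\bar g)} .
\]
Using \(|u|^2\le 2|\chi u|^2+2|(1-\chi)u|^2\) and adding Steps 2 and 3 bounds the left-hand side by \(C\|u\|^2_{H^1(\Omega,\bar g)}\). The Poincaré inequality \eqref{eq:poincare-inequality} then replaces this by \(c\int_\Omega|Du|^2\vol_{\bar g}\).

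\emph{Main obstacle.} The only delicate point is Step 2: justifying a uniform collar width \(a\) and two-sided bounds on the Jacobian \(J\). I would take both from the smoothness and compactness of \(\partial\Omega\) via the tubular neighbourhood theorem. The one-dimensional inequality itself is routine. Write \(f(t)=\int_0^t f'(s)\,\dd s\), then either integrate \(f^2\cdot t^{-2}\) by parts and apply Cauchy–Schwarz, or apply Minkowski's integral inequality to \(f(t)/t=\int_0^1 f'(st)\,\dd s\).
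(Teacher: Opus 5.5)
Your proof is correct. The paper does not prove this proposition: it is stated as a standard fact, with \cite{marcus1998hardy} cited only for the question of best constants. Your argument is the usual localisation proof, which has four ingredients:
\begin{enumerate}
\item the one-dimensional Hardy inequality applied fibrewise in the normal variable on a collar, with the Jacobian pinched between constants and $|\partial_t u|\le|Du|_{\bar g}$;
\item a trivial bound on the interior, where $d\ge a/2$;
\item the Poincar\'e inequality \eqref{eq:poincare-inequality}, which absorbs the $L^2$ terms, including the one produced by $D\chi$;
\item density together with Fatou's lemma.
\end{enumerate}

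Your route gives a self-contained justification in exactly the setting where the paper uses the result. That setting is $\Omega=M^\circ$ with $\bar g_0$, where $\mathrm{dist}_{\bar g_0}(\cdot,\partial M)$ is comparable to $\rho_0$. This is what makes $\varphi\mapsto\langle\varphi,f_2\rangle$ bounded on $H^1_0(M,\bar g_0)$ and drives the $H^{-1}$ convergence argument in the proof of Proposition~\ref{lem:w_eps_Laplacian}.

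The citation route gives generality instead: Lipschitz domains, or complements satisfying a uniform capacity condition. It also carries information on the optimal constant, which your $4c_2/c_1$ times Poincar\'e constants does not address.

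The step you leave implicit, that $d$ equals the normal coordinate $t$ on the collar, is quick. Work on a collar of width $2a$ where, by the Gauss lemma, $\bar g=\dd t^2+h_t$. Then any curve from the point with normal coordinate $t$ to $\partial\Omega$ has length at least $\min(t,2a-t)$. The normal geodesic of length $t$ gives the reverse inequality, so $d=t$ for $t\le a$.

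Restricting to bounded domains with smooth boundary is not a loss but a needed correction: for an arbitrary open domain the proposition as written is false. In dimension two, take $\Omega$ to be a punctured disc. Points have zero $H^1$-capacity, so $H^1_0(\Omega)=H^1_0(\DD)$. Any $u\in C^\infty_0(\DD)$ equal to $1$ near the puncture then gives $\int_\Omega |u|^2d^{-2}\ge 2\pi\int_0^{r_0}\dd r/r=\infty$, while $\int_\Omega|Du|^2<\infty$. Some boundary regularity, smooth as you assume or Lipschitz, has to be added to the statement, and your version covers every use of it in the paper.
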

\section{Proof of  \texorpdfstring{\cref{thm:hyperbolic}}{Theorem \ref{thm:hyperbolic}}}\label{sec:proof-main}
Now to continue, assume that \(\epsilon \) is sufficiently small. Fix some \(x_0\in M_\epsilon^\circ \) and let \(\delta >\epsilon>0 \) such that \(\rho_0(x_0) > \delta \).
Similarly to before, set \(M_\delta = \{x\in M : \rho_0(x) \geq \delta \}\). We further assume that the  the estimates from \cref{lem:G-0-epsilon} hold for \(y\in M\setminus M_\delta\). This can be achieved by choosing \(\delta \) sufficiently small.
On the neighbourhood \(M_\epsilon\setminus M_\delta \) we can define the  function
\begin{equation}
    \label{eq:v-epsilon-def}
    v_\epsilon(x) = \log \rho_0(x) -\log \epsilon \ .
\end{equation}
On the model asymptotically hyperbolic surface \((\DD\setminus \DD_{1-\overline \delta},g_{\Euc}/\varrho^2 )\), the function \(v_\epsilon(x) \) solves the boundary value problem \eqref{eq:bvp} exactly.
By multiplying \(v_\epsilon \) with a smooth cut-off function \(\chi\in C^\infty(\overline{M}) \), depending only on \(\rho_0(x) \), which has the properties \(\chi|_{\rho_0(x)<\delta/2 }\equiv 1 \) and \(\supp \chi\subset \overline{M}\setminus M_\delta\), we see that the function \(\chi v_\epsilon\in C^\infty(M_\epsilon) \).
\begin{proposition}\label{lem:w_eps_Laplacian}
    For \(x\in M_\epsilon\), let \(w_\epsilon:M_\epsilon \to \RR \) with \(w_\epsilon (x)=u_\epsilon(x) - \chi(x)  v_\epsilon(x)\) where \(u_\epsilon\) is the solution to \eqref{eq:bvp}. Then 
    \begin{enumerate}
        \item \label{prop_part:w_eps_laplacian_1} There are smooth functions \(f_1,f_2\in C^\infty(M_\epsilon)\) such that 
    \begin{equation}
        \Delta_{\bar{g}_0}w_\epsilon(x) = f_1(x) \log  \epsilon + f_2(x) \ {\text{on }} M_\epsilon \ .
    \end{equation}
    \item \label{prop_part:w_eps_laplacian_2} The function \(f_1 \) extends to a smooth function on \(\overline{M } \) which is 0 on \(\overline{M}\setminus M_\epsilon\). The integral
    \begin{align}\label{eq: f1}
        I_1(x) = \int_{M_\epsilon} G_0(x;y) f_1(y) \vol_{\bar{g}_0}(y) &= \chi(x) -1.
    \end{align}
    \item \label{prop_part:w_eps_laplacian_3} For each fixed  \(x_0 = (\rho_0(x_0),\theta(x_0))\in M^\circ \), the function 
    \begin{eqnarray}\label{eq: f2}     
    f_2(\rho_0,\theta) = -\frac{(1-\chi) + \beta^{(1)}(\theta)\rho_0 + \beta^{(2)}(\rho_0,\theta)}{\rho_0^2} -\Delta_{\bar{g}_0}\chi\cdot \log \rho_0- 2 \inner{d\chi}{d v_\epsilon}_{\bar{g}_0}
    \end{eqnarray}
    satisfies
    \begin{eqnarray}\label{eq:I2}  
        I_2(x_0)=\int_{M_\epsilon}G_0(x_0;y) f_2(y) \vol_{\bar{g}_0}(y) &=  \mathcal{O}(1)\ ,
    \end{eqnarray}
    as \(\epsilon \to 0 \). 
    \item \label{prop_part:w_eps_laplacian_4} Finally, if \(K\Subset M^\circ\) is compact, the function \(\tilde u_\epsilon(x) := I_2(x)\) converges uniformly on \(K\) to some  bounded smooth function \(\tilde{u}(x)\in C^\infty(K)\) given by 
    \begin{equation}\label{eq: tilde u def}
        \tilde u(x) := \int_{\overline{M}}G_0(x;y) f_2(y) \vol_{\bar{g}_0}(y)  \ .
    \end{equation}
\end{enumerate}
\end{proposition}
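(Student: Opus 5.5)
Compute \(\Delta_{\bar g_0}w_\epsilon\) directly from \eqref{eq:bvp_simplified} and the product rule:
\[
\Delta_{\bar g_0}(\chi v_\epsilon)=\chi\Delta_{\bar g_0}v_\epsilon+2\inner{d\chi}{dv_\epsilon}_{\bar g_0}+v_\epsilon\Delta_{\bar g_0}\chi .
\]
On \(\supp\chi\) we use the model coordinates. There \(\Delta_{\bar g_0}\log\rho_0=\Delta_{g_\Euc}\log\varrho=-1/\varrho^2\) (this is the computation behind the curvature lemma). Hence \(\chi\Delta_{\bar g_0} v_\epsilon=-\chi/\rho_0^2\), and combining with \(\Delta_{\bar g_0}u_\epsilon=-(1+\beta^{(1)}\rho_0+\beta^{(2)})/\rho_0^2\) gives
\[
\Delta_{\bar g_0}w_\epsilon=\Delta_{\bar g_0}\chi\,\log\epsilon+f_2,
\]
with \(f_2\) exactly as in \eqref{eq: f2}. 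Here \(dv_\epsilon=d\log\rho_0\) is independent of \(\epsilon\). So \(f_1=\Delta_{\bar g_0}\chi\), which is smooth on all of \(\overline M\) and supported in \(\{\delta/2\le\rho_0\le\delta\}\subset M_\epsilon\). This gives parts \ref{prop_part:w_eps_laplacian_1} and the first half of \ref{prop_part:w_eps_laplacian_2}. (The phrase ``0 on \(\overline M\setminus M_\epsilon\)'' is then automatic.)

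For \(I_1\), I apply Green's formula on \(M\) to the smooth function \(\chi-1\), which vanishes on \(\partial M\) because \(\chi\equiv1\) near \(\partial M\). Using \eqref{eq:greens0_conditions},
\[
\int_M G_0(x;y)\Delta_{\bar g_0}(\chi-1)\,\vol=\chi(x)-1,
\]
and both boundary terms vanish. Since \(f_1\) is supported in \(M_\epsilon\), the integral over \(M_\epsilon\) equals the integral over \(M\), which gives \eqref{eq: f1}.

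For part \ref{prop_part:w_eps_laplacian_3}, I split \(f_2\) into two pieces. The first piece consists of the terms \(-\Delta\chi\log\rho_0-2\inner{d\chi}{d\log\rho_0}\) together with \(-(1-\chi)/\rho_0^2\). This piece is smooth, bounded, and \(\epsilon\)-independent on \(\overline M\): \(1-\chi\) vanishes for \(\rho_0<\delta/2\), and the derivatives of \(\chi\) are supported away from \(\partial M\). Its integral against \(G_0(x_0;\cdot)\in L^1\) is therefore \(O(1)\).

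The second piece is \(-(\beta^{(1)}(\theta)\rho_0+\beta^{(2)}(\rho_0,\theta))/\rho_0^2\), where \(\beta^{(2)}=O(\rho_0^2)\). It is bounded by \(C/\rho_0\) near \(\partial M\) and smooth inside, so its only singularity is at the boundary. By \cref{lem:G-0-epsilon}, \(|G_0(x_0;y)|\le C_{x_0}\rho_0(y)\) for \(\rho_0(y)<\delta\), so the integrand is bounded by \(C_{x_0}\) there. Away from the boundary, \(G_0(x_0;\cdot)\) has only a log singularity at \(x_0\) and \(f_2\) is bounded, so the integrand is integrable. Hence \(|I_2(x_0)|\le\int_M|G_0(x_0;y)f_2(y)|\,\vol<\infty\) uniformly in \(\epsilon\), since \(M_\epsilon\subset M\).

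This domination is also the key to part \ref{prop_part:w_eps_laplacian_4}, which I expect to be the main point requiring care. Here I use the uniform constant \(C_K\) of \cref{lem:G-0-epsilon}:
\[
\sup_{x\in K}|\tilde u_\epsilon(x)-\tilde u(x)|\le\sup_{x\in K}\int_{\{\rho_0<\epsilon\}}|G_0(x;y)f_2(y)|\,\vol\le C_K'\,\mathrm{Vol}(\{\rho_0<\epsilon\})=O(\epsilon).
\]
This gives uniform convergence on \(K\), and in fact with a rate that feeds the \(r_\epsilon\) estimate. Boundedness of \(\tilde u\) on \(K\) follows from the same bound. For smoothness, \(\tilde u\) solves \(\Delta_{\bar g_0}\tilde u=f_2\) in \(M^\circ\) in the distributional sense, and \(f_2\) is smooth on \(M^\circ\). \Cref{prop:local-regularity}, applied for every \(s\), together with \cref{lem:sobolev embedding}, then gives \(\tilde u\in C^\infty(K)\). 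I will also need to justify that \(f_2\in L^1\) against test functions, which holds since \(f_2=O(1/\rho_0)\) is locally integrable in \(M^\circ\).
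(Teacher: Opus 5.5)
Your proof is correct. Parts 1--3 follow the paper's argument: the same computation with \(\Delta_{\bar g_0}\log\rho_0=-1/\rho_0^2\), the same \(f_1=\Delta_{\bar g_0}\chi\), and the same domination of \(G_0(x_0;\cdot)f_2\) via \cref{lem:G-0-epsilon}. For \(I_1\) you apply Green's formula on \(M\) to \(\chi-1\), so both boundary terms vanish. The paper instead works on \(M_\epsilon\) and uses \(\int_{\partial M_\epsilon}\partial_\nu G_0\,\vol_h=1\); this is only a cosmetic difference.

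The real difference is in part 4. The paper treats \(f_2\) as an element of \(H^{-1}(M,\bar g_0)\) via Hardy's inequality (\cref{prop:hardy-inequality}). It gets \(\tilde u_\epsilon\to\tilde u\) in \(H^1_0(M)\) from the \(H^{-1}\)-smallness of \((\mathbf{1}_{\rho_0>\epsilon}-1)f_2\). It then upgrades this to \(C^s(K)\) convergence by interior regularity, interpolation and Sobolev embedding. You instead observe that the uniform bound \(|G_0(x;y)|\le C_K\rho_0(y)\) exactly cancels the \(\mathcal{O}(1/\rho_0)\) singularity of \(f_2\), so that \(\sup_K|\tilde u_\epsilon-\tilde u|\le C\,\mathrm{Vol}(\{\rho_0<\epsilon\})=\mathcal{O}(\epsilon)\).

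Your route is shorter and gives an explicit rate. It also avoids the interpolation step, which in the paper tacitly needs \(\epsilon\)-uniform \(H^k(K)\) bounds on \(\tilde u_\epsilon\) that are not stated. In exchange, the paper's route identifies \(\tilde u\) as the unique \(H^1_0(M)\) solution of \(\Delta_{\bar g_0}\tilde u=f_2\) and gives convergence in every \(C^s(K)\). You could recover the latter by differentiating under the integral in \(x\), since \(\partial_x^\alpha G_0(x;y)\) also vanishes to first order at \(\rho_0(y)=0\) uniformly for \(x\in K\).

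One small fix concerns the distributional identity \(\Delta_{\bar g_0}\tilde u=f_2\) in \(M^\circ\). The integrability you need there is not local integrability of \(f_2\) in \(M^\circ\), which is trivial since \(f_2\) is smooth there. What Fubini needs is absolute integrability of \(G_0(x;y)f_2(y)\Delta_{\bar g_0}\varphi(x)\) on \(M\times M\). This follows from the same \(C_K\rho_0(y)\) domination with \(K=\supp\varphi\). It is genuinely needed, because \(f_2\) is in general not integrable up to \(\partial M\).
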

\begin{proof}
    We simply calculate the Laplacian applied to \(w_\epsilon\) and use expression \eqref{eq:v-epsilon-def} for \(v_\epsilon\) to get 
    \begin{equation}\label{eq: Delta of w epsilon}
        \begin{split}
            \Delta_{\bar{g}_0} w_\epsilon(x) &= -\frac{1-\chi(x) + \beta^{(1)}(\theta)\rho_0(x) + \beta^{(2)}(\rho_0,\theta)}{\rho_0(x)^2 } -\Delta_{\bar{g}_0}(\chi(x)) v_\epsilon(x) -2\inner{d\chi(x)}{d v_\epsilon(x)}_{\bar{g}_0} \\
            &= f_1(x)\log\epsilon + f_2(x) \ ,
        \end{split}
    \end{equation}
    where \(f_1 = \Delta_{\bar{g}_0}\chi\) and 
    \[
        f_2 = -\frac{(1-\chi) + \beta^{(1)}(\theta)\rho_0 + \beta^{(2)}(\rho_0,\theta)}{\rho_0^2} -\Delta_{\bar{g}_0}\chi\cdot \log \rho_0- 2 \inner{d\chi}{d v_\epsilon}_{\bar{g}_0}\ .
    \] 
    Then by construction \(f_1,f_2 \in C^\infty(M_\epsilon) \) are smooth. This gives part 1.

    Now, we note that \(f_1 \) is identically 0 on a neighbourhood of the  boundary \(\partial M_\epsilon\), so that we can extend 
    \(f_1 \) to a smooth function on \(\overline{M }\) with \(f_1|_{\overline{M}\setminus M_\epsilon} \equiv 0 \).

    Given these expressions we can compute 
    \begin{equation}
        \begin{split}
            I_1(x) &= \int_{M_\epsilon}G_0(x;y)f_1(y) \vol_{\bar{g}_0}(x) =\int_{M_\epsilon}G_0(x;y)\Delta_{\bar{g}_0}\chi (y) \vol_{\bar{g}_0}(y) \\
            &=    \chi(x) + \int_{\partial M_\epsilon}G_0(x;y)\partial_{\nu} \chi(y) - \partial_{\nu_y} G_0(x;y) \chi(y) \vol_h(y) = \chi(x)-1 \ ,
        \end{split}
    \end{equation}
    due to fact that \(\int_{\partial M_\epsilon}\partial_\nu G_0(x;y) \vol_h(y) = 1 \) by integration by parts, 
    and hence \(I_1(x) \in C^\infty(\overline{M })\).
    This gives part 2.

    We now show that for each \(x_0\in M^\circ \) the integral \(I_2(x_0) \) is \(\mathcal{O}(1)\)  as \(\epsilon\to 0 \). 
    The functions 
    \[
        -\frac{1-\chi}{\rho_0^2}\ ,\quad \Delta_{\bar{g}_0}\chi \cdot \log \rho_0\ ,\quad \inner{d \chi}{d v_\epsilon}_{\bar{g}_0}
    \]
    are all identically 0 in a collar neighbourhood containing \(\partial M_\epsilon \) of the boundary \(\partial M \). 
    Similarly, the function \(\beta^{(2)}(\rho_0,\theta)/\rho_0^2\) is a bounded smooth function on this collar neighbourhood.  Hence, for each \(x_0\in M_\epsilon \) the function 
    \[
        G_0(x_0;y) \left[ -\frac{1-\chi+ \beta^{(2)}(\rho_0,\theta)}{\rho_0^2}-\Delta_{\bar{g}_0}\chi \cdot \log \rho_0-\inner{d \chi}{d v_\epsilon}_{\bar{g}_0} \right]
    \]
    is absolutely integrable over \(\overline{M }\).
    Finally, by  \cref{lem:G-0-epsilon} we notice that for each \(x_0\in M^\circ \) the function \(G_0(x_0;y)/\rho_0(y) \) is also absolutely integrable over \(\overline{M }\). Then, the integral 
    \begin{equation*}
        \int_{M_\epsilon}G_0(x;y) f_2(y) \vol_{\bar g_0}(y)  = \int_{\overline M}G_0(x;y) f_2(y)\mathbf{1}_{\rho_0>\epsilon} (y)\vol_{\bar g_0}(y)  
    \end{equation*}
    converges to 
    \begin{equation*}
        \int_{\overline M}G_0(x;y) f_2(y) \vol_{\bar g_0}(y) = \mathcal O  (1)\ ,
    \end{equation*}
    by the dominated convergence theorem.
    This gives part 3. 

    We can extend the smooth function \(f_2  \)  to a distribution in the Sobolev space \(H^{-1}(M,\bar g_0)\)  by setting 
    for each \(\varphi\in H^{1}_0(M,\bar g_0)\), the pairing 
    \begin{equation}
        \label{eq:f2 in H-1p}
        \inner{\varphi}{f_2 } = \int_{\overline M} \overline{\varphi(x) } \left(-\frac{(1-\chi) + \beta^{(1)}(\theta)\rho_0 + \beta^{(2)}(\rho_0,\theta)}{\rho_0^2} -\Delta_{\bar{g}_0}\chi\cdot \log \rho_0- 2 \inner{d\chi}{d v_\epsilon}_{\bar{g}_0}\right) \vol_{\bar g_0}(x) .
    \end{equation}
    Note that \(\varphi\mapsto \langle \varphi, f_2\rangle\) is a bounded linear function on \(H^1_0(M, \bar g_0)\) thanks to Hardy's inequality, \cref{prop:hardy-inequality}.
    
    Given this, we observe then, that the function \(\tilde u_\epsilon(x): M \to \RR \) defined by 
    \begin{equation}
        \tilde u_\epsilon(x) =I_2(x)= \int_{M_\epsilon}G_0(x;y) f_2(y) \vol_{\bar g_0}(y)  = \int_M G_0(x;y) \mathbf{1}_{\rho_0>\epsilon}(y)f_2\vol_{\bar g_0}(y)
    \end{equation}
    solves the partial differential equation 
    \begin{equation}
        \begin{cases}
            \Delta_{\bar g_0} \tilde u_\epsilon(x) =\mathbf{1}_{\rho_0>\epsilon}(x) f_2(x)  \quad &\text{for \(x\in M^\circ\) }\\
            \tilde u_\epsilon|_{\partial M} = 0  
        \end{cases}  \ .
    \end{equation}
    For any \(\epsilon>0 \), \cref{prop:elliptic_regularity_p-type_sobolev} implies that 
    the function  \(\tilde u_\epsilon(x) \in H^{1}_0(M,\bar g_0)\), since the right hand side \(\mathbf{1}_{\rho_0>\epsilon}f_2 \in H^{-1}(M,\bar g_0)\).

    Notice further that \(\tilde u (x) \) given by \eqref{eq: tilde u def} solves the distributional partial differential equation 
    \begin{equation}\label{eq: tilde u solves}
        \begin{cases}
            \Delta_{\bar g_0} \tilde u (x) = f_2(x) \quad &\text{for \(x\in M^\circ\) }\\
            \tilde u|_{\partial M} = 0 
        \end{cases}
    \end{equation}
    We now show that \(\tilde u_\epsilon(x) \) converges to \(\tilde u(x) \) in \(H^{1}_0(M,\bar g_0)\).
    By \cref{prop:elliptic_regularity_p-type_sobolev} it follows that solutions to \cref{eq: tilde u solves} are unique. This shows that \(\tilde u (x)\in H^{1}_0(M, \bar g_0) \) is the unique  solution to \cref{eq: tilde u solves}.

    Now we have by the Cauchy-Schwartz inequality and by \cref{prop:hardy-inequality}, for each \(\epsilon>0 \) and 
    all  \(\varphi\in H^{1}_0(M,\bar g_0)\)  that there are constants \(c,d,d'>0 \) such that
    \begin{equation}
        \begin{split}
            \int_{M}&\left|(\mathbf{1}_{\rho_0>\epsilon}(x) - 1) f_2(x) \varphi(x) \right|\vol_{\bar g_0} x 
            \leq c \left( \int_{M}\left|\frac{\varphi}{\rho_0}\right|^{2} \vol_{\bar g_0}(x)\right)^{1/2} \|\mathbf{1}_{\rho_0>\epsilon}-1\|_{L^{2}(M,\bar g_0 )}\\
            &\leq d\left( \int_M |D\varphi(x)|^{2} \vol_{\bar g_0} x  \right)^{1/2}\cdot \|\mathbf{1}_{\rho_0>\epsilon}-1\|_{L^{2}(M,\bar g_0)} = d\|D\varphi\|_{L^{2}(M)} \cdot \|\mathbf{1}_{\rho_0>\epsilon}-1\|_{L^{2}(M,\bar g_0)}\\
            &\leq d' \|\varphi\|_{H^{1}(M,\bar g_0)}\cdot \|\mathbf{1}_{\rho_0>\epsilon}-1\|_{L^{2}(M,\bar g_0)} \ .
        \end{split}
    \end{equation}
    Hence, we have that in the dual space topology on \(H^{-1}(M,\bar g_0)\), that 
    \begin{equation}
        \begin{split}
            \|\Delta_{\bar g_0} (\tilde u_\epsilon - \tilde u)\|_{H^{-1}(M,\bar g_0)} &= \sup_{\substack{\varphi\in H^{1}_0(M,\bar g_0)\\ 
            \|\varphi\|_{H^{1}}(M,\bar g_0)=1}}|\inner{\Delta_{\bar g_0} (\tilde u_\epsilon - \tilde u)}{\varphi}|\\
            &= \sup_{\substack{\varphi\in H^{1}_0(M,\bar g_0)\\ \|\varphi\|_{H^{1}(M,\bar g_0)}=1}}|\inner{(\mathbf{1}_{\rho_0>\epsilon}(x) - 1) f_2(x)}{\varphi}|\\
            &\leq d' \|\mathbf{1}_{\rho_0>\epsilon}-1\|_{L^{2}(M,\bar g_0)} \xrightarrow{\epsilon\to 0 } 0  \ . 
        \end{split}
    \end{equation}
    Thus by \cref{prop:elliptic_regularity_p-type_sobolev}, it follows that, there is a constant \(C\) such that 
    \begin{equation}
        \|\tilde u_\epsilon - \tilde u\|_{H^{1}(M,\bar g_0)} \leq C \|\mathbf{1}_{\rho_0>\epsilon}-1\|_{L^2(M,\bar g_0)} \xrightarrow{\epsilon\to 0 }0 \ ,
    \end{equation}
    hence \(\tilde u_\epsilon(x) \) converges to \(\tilde u(x) \) in \(H^{1}_0\).

    Now for any compact set \(K \Subset M^\circ \), we can then take \(\epsilon>0 \) sufficiently small, such that the set \(K \subset \overline{\{\rho_0 > 2 \epsilon\}}\). 
    Because \(f_2 \in H^{k-2}_{\mathrm{loc }}(M,\bar g_0)\) for any \(k\in \ZZ^+ \), it follows that \(\tilde u_\epsilon, \tilde u \in H^{k}_{\mathrm{loc }}(M,\bar g_0)\) for any \(k\in \ZZ^+ \)  by \cref{prop:local-regularity}.
    Furthermore, for any \(\varphi\in C^\infty_0(M) \) such that \(\supp \varphi \subset K \) we have 
    \begin{equation}
        \varphi \Delta_{\bar g_0 } \tilde u_\epsilon  = \varphi f_2 = \varphi \Delta_{\bar g_0} \tilde u  \ ,\quad 
        \text{which implies that}\quad 
        \varphi \Delta_{\bar g_0} (\tilde u_\epsilon - \tilde u) = 0  \ .
    \end{equation}
    This means that \(\tilde u_\epsilon - \tilde u \in H^{k}(K,\bar g_0) \) for any \(k \in \ZZ^+\). Now by the complex interpolation method \cite[Theorem 6.4.5 part (7)]{Bergh1976interpolation}, we find that for any \(l  = (1-\theta')k +\theta', \theta'\in (0,1 ) \) 
    \begin{equation}
        \begin{split}
            \|\tilde u_\epsilon - \tilde u\|_{H^{l}(K,\bar g_0)} &\leq C(k) \| \tilde u_\epsilon - \tilde u \|^{1-\theta'}_{H^{k}(K,\bar g_0)} \cdot \|\tilde u_\epsilon - \tilde u\|_{H^{1}(K,\bar g_0)}^{\theta'}\\
            &\leq C(k) (\|\tilde u_\epsilon\|^{1-\theta'}_{H^{k}(K,\bar g_0)} + \| \tilde u \|^{1-\theta'}_{H^{k}(K,\bar g_0)} ) \cdot\|\tilde u_\epsilon - \tilde u\|_{H^{1}(K,\bar g_0)}^{\theta'} \xrightarrow{\epsilon \to 0 }0  \ .
        \end{split}
    \end{equation}
    Hence by  \cref{lem:sobolev embedding} it follows that for any integer \(s \) we can find \(l\) sufficiently large such that 
    \begin{equation}
        \|\tilde u_\epsilon - \tilde u\|_{C^s(K)} \leq C(s,k')\|\tilde u_\epsilon - \tilde u \|_{H^{l}(M,\bar g_0)} \xrightarrow{\epsilon\to 0 }0  \ .
    \end{equation}
    This shows that for any  compact subsets \(K \Subset M^\circ \), the function \(\tilde u \in C^\infty (K) \) and \(\tilde u_\epsilon \) converges uniformly to \(\tilde u \) on \(K \). This finishes the proof of the proposition.
\end{proof}
We now give one more estimate on the functions \(f_1 \) and \(f_2 \) in the following proposition. 
\begin{proposition}\label{lem:normal_integral}
    Let \(f_1,f_2 \in C^\infty({M_\epsilon })\) be the functions from  \cref{lem:w_eps_Laplacian}. Then, for \(z\in \partial M_\epsilon\), there are functions \(g_1, g_2:\partial M_\epsilon\to \RR  \) such that 
    \begin{equation}\label{eq: int f1 f2}
        \int_{M_\epsilon} \partial_{\nu_z}G_0(z;y)(f_1(y)\log \epsilon + f_2(y))\vol_{\bar{g}_0}(y) = g_1(z)\log\epsilon + g_2(z) \ ,
    \end{equation}
    with 
    \(\sup_{z\in \partial M_\epsilon}| g_i (z)|= \mathcal{O}(1)\) as \(\epsilon\to 0 \).
\end{proposition}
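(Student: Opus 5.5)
The natural choice is \(g_1(z) := \int_{M_\epsilon}\partial_{\nu_z}G_0(z;y)f_1(y)\vol_{\bar g_0}(y)\) and \(g_2(z) := \int_{M_\epsilon}\partial_{\nu_z}G_0(z;y)f_2(y)\vol_{\bar g_0}(y)\), so that \eqref{eq: int f1 f2} is an identity. It then remains to bound each \(g_i\) uniformly in \(z\in\partial M_\epsilon\).

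\textbf{The term \(g_1\).} Here \(f_1=\Delta_{\bar g_0}\chi\) is supported in \(\{\delta/2\le\rho_0\le\delta\}\), a fixed compact set independent of \(\epsilon\). Since \(z\in\partial M_\epsilon\) lies at distance at least \(\delta/4\) from this support once \(\epsilon\ll\delta\), the kernel \(\partial_{\nu_z}G_0(z;y)\) is smooth there. It is uniformly bounded because \(G_0\) is smooth off the diagonal up to the boundary, so \(|g_1(z)|\le C\|f_1\|_{L^1}\). Alternatively, one can differentiate the identity \(I_1=\chi-1\) from \cref{lem:w_eps_Laplacian}, with the integral taken over the part of \(M\) bounded by \(\partial M_\epsilon\). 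This would even give \(g_1\equiv\partial_\nu\chi=0\) near the boundary. The direct bound suffices, though.

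\textbf{The term \(g_2\): splitting the source.} Write \(f_2 = -\beta^{(1)}(\theta)/\rho_0 + h\), where \(h\) collects everything else. The terms involving \(1-\chi\), \(\Delta\chi\log\rho_0\) and \(d\chi\) are compactly supported away from \(\partial M\). The term \(\beta^{(2)}/\rho_0^2\) is bounded. Hence \(h\in L^\infty\) uniformly in \(\epsilon\).

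\textbf{Bounding each piece.} I split the \(y\)-integral into the region \(y\in M_{\delta/4}\) and the collar \(y\in M_\epsilon\setminus M_{\delta/4}\).
\begin{itemize}
\item On \(M_{\delta/4}\), the point \(y\) is far from \(z\), so \(\partial_{\nu_z}G_0\) is bounded and the contribution is \(\mathcal O(1)\).
\item In the collar, use \cref{prop:greens decomposition}: \(G_0=G_{0,\prin}+G_{0,\rem}\). The remainder part is bounded by \(C_0\rho_0(y)+C_1(\epsilon+\rho(y))^2+C_2(\epsilon+\rho(y))^3\). Multiplied by \(|f_2|\lesssim 1/\rho_0(y)\), this is bounded, so its integral is \(\mathcal O(1)\).
\item For the principal part, \(\kappa\equiv 1\) near \(z\), and the computation reduces to the disc via \(\Phi\). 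There \(\partial_{\nu_z}G_{0,\prin,\A}(z;y)\) at \(|z|=r_\epsilon=\sqrt{1-2\epsilon}\) is explicitly a Poisson-type kernel for the disc of radius \(r_\epsilon\), up to a smooth correction of size \(\mathcal O(\epsilon)\) coming from the fact that \(\partial M_\epsilon\) is not \(\partial\DD\).
\end{itemize}

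\textbf{Main obstacle.} The key step is to show \(\int_{\epsilon<\rho_0<\delta}|\partial_{\nu_z}G_{0,\prin}(z;y)|\,\rho_0(y)^{-1}\dd y=\mathcal O(1)\) uniformly in \(z\). I would first try the maximum principle. Note \(\partial_{\nu_z}G_0(z;\cdot)\) has a sign and, as a function of \(y\), is harmonic with boundary values a Poisson kernel at \(z\) on the small circle. Then \(\int\partial_{\nu_z}G_0(z;y)\,\phi(y)\,\dd y = \psi(z)\)-type identities apply, with \(\psi\) solving \(\Delta\psi=\phi\) with zero Dirichlet data on the outer boundary. Taking \(\phi=1/\rho_0\) in the model disc, \(\psi\) can be computed in closed form as a radial function. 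Its growth is like \(\rho_0\log\rho_0\), so \(\partial_r\psi\) at \(r_\epsilon\) is \(\mathcal O(\log\epsilon)\).

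That is worrying. It suggests the \(\beta^{(1)}\) term might genuinely produce \(\log\epsilon\), to be absorbed into \(g_1\log\epsilon\) rather than \(g_2\). So I would compute this radial model exactly. I would then redistribute any \(\log\epsilon\) contribution into \(g_1\), keeping \(g_2\) bounded, since the statement only requires both to be \(\mathcal O(1)\). Positivity of the kernel then controls the non-radial \(\beta^{(1)}(\theta)\) by \(\sup|\beta^{(1)}|\) times the radial bound.
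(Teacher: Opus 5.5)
Your final plan is the paper's argument. The $f_1$ piece vanishes (the paper's $\tilde g_1\equiv 0$), and the far-field, remainder, cut-off and $\beta^{(2)}$ pieces are $\mathcal O(1)$; the principal-part contribution of $\beta^{(1)}(\theta)/\rho_0$ is genuinely of size $|\log\epsilon|$ and is absorbed into $g_1$ (the paper takes $g_1=h_2/\log\epsilon$ and $g_2=\tilde g_2+h_1+h_3+h_4$), so your initial choice of $g_2$ and the announced ``key step'' must be discarded, as you recognised.

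The only real difference is how the principal term is bounded. You use positivity of the model kernel $\partial_{r_z}G_{0,\prin,\A}(z;\cdot)$ on $|y|<r_\epsilon$ together with the explicit radial solution of $\Delta\psi=1/\rho_0$, which gives exactly $\partial_r\psi(r_\epsilon)=-\log(2\epsilon)/\sqrt{1-2\epsilon}$. The paper instead evaluates the angular integral of the absolute kernel by computer algebra; your route is cleaner and reaches the same $\mathcal O(|\log\epsilon|)$ bound.
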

\begin{proof}
    Define the  family of functions \(\tilde g_1, \tilde g_2: \partial M_\epsilon \to \RR \) by 
    \begin{equation}\label{eq:del chi - 1}
        \tilde g_1(z) := \int_{M_\epsilon}\partial_{\nu_z}G_0(z;y)f_1(y)   \vol_{\bar{g}_0}(y)  
    \end{equation}
    and 
    \begin{equation}\label{eq: tilde g2}
        \tilde g_2 (z):=\int_{M_\epsilon}\partial_{\nu_z}G_0(z;y)\left[ \frac{1-\chi}{\rho_0^2}+\Delta_{\bar{g}_0}\chi \cdot \log \rho_0+\inner{d\chi}{d v_\epsilon}_{\bar{g}_0} \right]\vol_{\bar{g}_0}(y) \ .
    \end{equation}
    \begin{lemma}\label{lem:normal_integral_no_singularities}
        For \(\epsilon \) small enough, \(\tilde g_1: \partial M_\epsilon \to \RR \) is constant \(0\).
        Furthermore, the function \(\tilde g_2(z)\) satisfies the estimate \(\sup_{z\in \partial M_\epsilon}|\tilde g_2(z)| = \mathcal O(1)  \) as \(\epsilon \to 0 \).
    \end{lemma}
    By \cref{lem:normal_integral_no_singularities} we have that the left hand side of \eqref{eq: int f1 f2} becomes, for \(z\in \partial M_\epsilon\)
    \begin{equation}
        \label{eq: int f1 f2'}
        \begin{split}
            \int_{M_\epsilon} \partial_{\nu_z}G_0(z;y)&(f_1(y)\log \epsilon + f_2(y))\vol_{\bar{g}_0}(y) \\ 
            &= \tilde g_2(z) +  \int_{M_\epsilon}\partial_{\nu_z}G_0(z;y) \frac{\beta^{(1)}(\theta(y))\rho_0(y)+ \beta^{(2)}(\rho_0,\theta)}{\rho_0(y)^2}\vol_{\bar{g}_0}(y) 
        \end{split}
    \end{equation}
    with \(\tilde g_2(z)\) uniformly bounded  on \(\partial M_\epsilon \) as \(\epsilon \to 0 \).

    To this end, it remains to analyse the integral
    \begin{equation}\int_{M_\epsilon}\partial_{\nu_z}G_0(z;y) \frac{\beta^{(1)}(\theta(y))\rho_0(y)+ \beta^{(2)}(\rho_0,\theta)}{\rho_0(y)^2}\vol_{\bar{g}_0}(y) \ .
        \label{eq:normal_error}
    \end{equation}
    We shall show that the absolute integral is bounded uniformly by a function which is a multiple of \(\log \epsilon\)  for \(\epsilon>0 \). We do this with the following four lemmas.
    \begin{lemma}\label{lem:normal_error_far_away}
        The function 
        \begin{equation}\label{eq: int G far}
            h_1(z) = \int_{M_{\delta/4}}\partial_{\nu_z}G_0(z;y) \frac{\beta^{(1)}(\theta(y))}{\rho_0(y)}\vol_{\bar{g}_0}(y) 
        \end{equation}
        satisfies the estimate \(\sup_{z\in \partial M_\epsilon}|h_1(z) | = \mathcal{O}(1)\) as \(\epsilon \to 0 \).
    \end{lemma}
    \begin{lemma}\label{lem:normal_error_principal}
        Let \(G_{0,\mathrm{prin}}\) be the principal part of the Green's function from \cref{def:Greens_principal}. Then the function 
        \begin{equation}\label{eq: int Gprin}
            h_2(z) = \int_{M_\epsilon\setminus M_{\delta/4}}\partial_{\nu_z}G_{0,\mathrm{prin}}(z;y) \frac{\beta^{(1)}(\theta(y))}{\rho_0(y)}\vol_{\bar{g}_0}(y) 
        \end{equation}
        satisfies the estimate \(\sup_{z\in \partial M_\epsilon}|h(z)| = \mathcal{O}(-\log \epsilon )\) as \(\epsilon\to 0 \).
    \end{lemma}
    \begin{lemma}\label{lem:normal_error_remainder}
        Let \(G_{0,\mathrm{rem}}\) be the remainder part of the Green's function from \cref{def:Greens_principal}. Then the function  
        \begin{equation}\label{eq: int Grem}
            h_3(z) = \int_{M_\epsilon\setminus M_{\delta/4} }\partial_{\nu_z}G_{0,\mathrm{rem}}(z;y)\frac{\beta^{(1)}(\theta(y))}{\rho_0(y)}\vol_{\bar{g}_0}(y)
        \end{equation}
        satisfies the estimate \(\sup_{z\in \partial M_\epsilon }|h_3(z)| = \mathcal{O}(1) \) as \(\epsilon\to 0 \).
    \end{lemma}
    \begin{lemma}\label{lem:normal_beta2}
        The function 
        \begin{equation}\label{eq: uniformly bdd integrand}
            h_4(z) = \int_{M_{\epsilon}}\partial_{\nu_z}G_0(z;y) \frac{\beta^{(2)}(\rho_0(y),\theta(y))}{\rho_0(y)^2}\vol_{\bar{g}_0}(y) 
        \end{equation}
        satisfies the estimate \(\sup_{z\in \partial M_\epsilon}|h_1(z) | = \mathcal{O}(1)\) as \(\epsilon \to 0 \).
    \end{lemma}

    By 
    \cref{lem:normal_error_principal}, it follows that \(\sup_{z\in \partial M_\epsilon}|h_2(z)| = \mathcal O(\log \epsilon )\) as \(\epsilon \to 0 \), hence the function
    \begin{equation}
        g_1(z) = \frac{h_2(z) }{\log \epsilon}
    \end{equation}
    satisfies \(\sup_{z\in \partial M_\epsilon}|g_1(z)| =\mathcal{O}(1)\) as \(\epsilon\to 0 \). 
    
    Similarly, \cref{lem:normal_integral_no_singularities} shows that the function \(\tilde g_2(z)\) satisfies the estimate \(\sup_{z\in \partial M_\epsilon}|\tilde g_2(z)| = \mathcal O(1) \) as \(\epsilon\to 0 \). \Cref{lem:normal_error_far_away} shows that the function \(h_1(z) \) satisfies the estimate \(\sup_{z\in \partial M_\epsilon}|h_1(z)|= \mathcal O(1) \) as \(\epsilon\to 0 \), \cref{lem:normal_error_remainder} shows that the function \(h_2(z) \) satisfies the estimate \(\sup_{z\in \partial M_\epsilon}|h_3(z)|= \mathcal O(1) \) as \(\epsilon\to 0 \) and \cref{lem:normal_beta2} shows that the functions \(h_3(z) \)  satisfies  the estimate \(\sup_{z\in \partial M_\epsilon}|h_4(z)| = \mathcal O(1) \) as \(\epsilon\to 0 \). 
    Hence, it follows that the function
    \begin{equation}
        g_2(z) = \tilde g_2 (z) + h_1(z) +h_3(z) + h_4(z)
    \end{equation}
    satisfies the estimate \(\sup_{z\in \partial M_\epsilon}|g_2(z)| =\mathcal{O}(1)\) as \(\epsilon\to 0 \). 
    Combining these facts, we conclude that 
    \[ 
        \int_{M_\epsilon} \partial_{\nu_z}G_0(z;y)(f_1(y)\log \epsilon + f_2(y))\vol_{\bar{g}_0}(y) = g_1(z)\log\epsilon + g_2(z) 
    \]
    with 
    \(\sup_{z\in \partial M_\epsilon}| g_i (z)|= \mathcal{O}(1)\) as \(\epsilon\to 0 \).
\end{proof}
\begin{proof}[Proof of \cref{lem:normal_integral_no_singularities}]
    The conclusion for Equation \eqref{eq:del chi - 1} follows from the observation that 
    \begin{equation*}
        \int_{M_\epsilon} \partial_{\nu_z}G(z;y) f_1(y)\vol_{\bar g_0}(y)  = \partial_{\nu_z}\int_{M_\epsilon} G(z;y) f_1(y)\vol_{\bar g_0}(y)  = \partial_{\nu_z}(\chi(z)-1) = 0
    \end{equation*}
    for \(z\in \partial M_\epsilon\) by \cref{eq: f1}.

    For \(\tilde g_2\) given by \cref{eq: tilde g2}, we notice that the  functions \((1-\chi)/\rho_0^2,\Delta_{\bar{g}_0}\chi \cdot \log \rho_0 \) and \(\inner{d\chi}{d v_\epsilon}_{\bar{g}_0}\in C^\infty(M_\epsilon)\) are supported on a subset of \(M_{\delta/2}\), and trivially are uniformly bounded as \(\epsilon\to 0 \). By construction of \( G_{0,\prin }(x;y) \) in \cref{def:Greens_principal} we have that by \cref{eq:kappa-properties} that \(\supp G_{0,\prin}(z;\cdot)|_{z\in \partial M_\epsilon }\subset \overline M \setminus M_{\delta/3}\) and thus that 
    \begin{equation}\label{eq: tilde g2 1}
        \int_{M_\epsilon}\left|\partial_{\nu_z} G_{0,\prin}(z;y) \left[ \frac{1-\chi}{\rho_0^2}+\Delta_{\bar{g}_0}\chi \cdot \log \rho_0+\inner{d\chi}{d v_\epsilon}_{\bar{g}_0} \right]\right|\vol_{\bar{g}_0}(y) = 0  \ .
    \end{equation}
    By \cref{prop_part:greens decomp 2} of \cref{prop:greens decomposition} and since the functions \((1-\chi)/\rho_0^2,\Delta_{\bar{g}_0}\chi \cdot \log \rho_0 \) and \(\inner{d\chi}{d v_\epsilon}_{\bar{g}_0}\in C^\infty(M_\epsilon)\) are uniformly bounded by some constant \(D \) on \(M_\epsilon \) as \(\epsilon \to 0 \), it follows that there is some constant \(D' \) such that 
    \begin{equation}\label{eq: tilde g2 2}
        \begin{split}
            \sup_{z\in \partial M_\epsilon}&\int_{M_\epsilon\setminus M_{\delta/4}}\left| \partial_{\nu_z} G_{0,\rem}(z;y)\left[ \frac{1-\chi}{\rho_0^2}+\Delta_{\bar{g}_0}\chi \cdot \log \rho_0+\inner{d\chi}{d v_\epsilon}_{\bar{g}_0} \right]\right|\vol_{\bar{g}_0}(y)\\
            &\leq \int_{M_\epsilon\setminus M_\delta} (C_0\rho_0(y) + C_1(\epsilon + \rho_0(y))^2 + C_2(\epsilon + \rho_0(y))^3)\cdot  D \vol_{\bar g_0}(y) \leq D' 
        \end{split}
    \end{equation}
    uniformly as \(\epsilon \to 0  \). Finally, there is a constant \(E \) such that the integral 
    \begin{equation}\label{eq: tilde g2 3}
        \int_{M_{\delta/4}}\left| \partial_{\nu_z} G_{0,\rem}(z;y)\left[ \frac{1-\chi}{\rho_0^2}+\Delta_{\bar{g}_0}\chi \cdot \log \rho_0+\inner{d\chi}{d v_\epsilon}_{\bar{g}_0} \right]\right|\vol_{\bar{g}_0}(y)\leq E 
    \end{equation}
    is uniformly bounded as \(\epsilon \to 0 \). This is because the integral in \cref{eq: tilde g2 3} is defined by integration over the manifold \(M_{\delta/4  } \), and since the functions \(G_{0,\rem }(z;y) \) and the functions \((1-\chi)/\rho_0^2,\Delta_{\bar{g}_0}\chi \cdot \log \rho_0 \) and \(\inner{d\chi}{d v_\epsilon}_{\bar{g}_0}\in C^\infty(M_\epsilon)\) are smooth and uniformly bounded away from the boundary \(\partial M_\epsilon\), it follows by the dominated convergence theorem,  that the integral is uniformly bounded as \(\epsilon\to 0 \) since the integrand is absolutely integrable on \(M_{\delta/4}\).

    To conclude the proof of \cref{lem:normal_integral_no_singularities} we have that 
    \begin{equation*}
        \begin{split}
            \sup_{z\in \partial M_\epsilon}|\tilde g_2 (z) | &\leq \int_{M_\epsilon}\left|\partial_{\nu_z}G_0(z;y) \left[ \frac{1-\chi}{\rho_0^2}+\Delta_{\bar{g}_0}\chi \cdot \log \rho_0+\inner{d\chi}{d v_\epsilon}_{\bar{g}_0} \right]\right|\vol_{\bar{g}_0}(y)\\
            &\leq \int_{M_\epsilon}\left|\partial_{\nu_z}G_{0,\prin}(z;y) \left[ \frac{1-\chi}{\rho_0^2}+\Delta_{\bar{g}_0}\chi \cdot \log \rho_0+\inner{d\chi}{d v_\epsilon}_{\bar{g}_0} \right]\right|\vol_{\bar{g}_0}(y)\\
            &\quad + \int_{M_\epsilon\setminus M_{\delta/4 }}\left|\partial_{\nu_z}G_{0,\rem }(z;y) \left[ \frac{1-\chi}{\rho_0^2}+\Delta_{\bar{g}_0}\chi \cdot \log \rho_0+\inner{d\chi}{d v_\epsilon}_{\bar{g}_0} \right]\right|\vol_{\bar{g}_0}(y)\\
            &\quad + \int_{M_{\delta/4 }}\left|\partial_{\nu_z}G_{0,\rem }(z;y) \left[ \frac{1-\chi}{\rho_0^2}+\Delta_{\bar{g}_0}\chi \cdot \log \rho_0+\inner{d\chi}{d v_\epsilon}_{\bar{g}_0} \right]\right|\vol_{\bar{g}_0}(y) \ .
        \end{split}
    \end{equation*}
    Each of the three integrals on the right hand side is uniformly bounded as \(\epsilon \to 0 \) by Equations \cref{eq: tilde g2 1,eq: tilde g2 2,eq: tilde g2 3}. Hence the estimate \(\sup_{z\in \partial M_\epsilon}|\tilde g_2(z)| = \mathcal O (1) \) as \(\epsilon \to 0 \) follows.
\end{proof}

\begin{proof}[Proof of \cref{lem:normal_error_far_away}]
    The function \(h_1\) defined in \eqref{eq: int G far} is defined by integration over the manifold \(\rho_0(y)> \delta/4\). Since \(G_0(z;y)\) and \({1}/{\rho_0(y) } \)  are smooth and uniformly bounded as \(\rho_0(z) \to 0 \) away from the boundary \(\partial M_\epsilon\),  the conclusion follows by the dominated convergence theorem since the integrand is absolutely integrable.
\end{proof}
\begin{proof}[Proof of \cref{lem:normal_error_principal}]
    We are going to explicitly compute the integral \eqref{eq: int Gprin} on the annulus \(\DD\setminus \DD_{1-\bar\delta}\), since we can give a global structure of the principal part of the Green's function on the annulus. We have by \cref{rem:Greens disc} on the annulus \(\DD\setminus \DD_{1-\bar\delta}\)
    \begin{equation}\label{eq:g-0-prin-normalderiv}
        \begin{split}
            \partial_{r_z}&G_{0,\mathrm{prin},\mathcal{A}}(\sqrt{1-2\epsilon},\vartheta_z;r_y,\vartheta_y)\\
            &= \frac{\left(r_y^2-1\right) \left(\left(r_y^2+1\right) \sqrt{1-2 \epsilon }+2 r_y (\epsilon -1) \cos (\vartheta_y-\vartheta_z)\right)}{2 \pi  \left(r_y^2-2 r_y \sqrt{1-2 \epsilon } \cos (\vartheta_y-\vartheta_z)-2 \epsilon +1\right) \left(r_y^2 (2 \epsilon -1)+2 r_y \sqrt{1-2 \epsilon } \cos (\vartheta_y-\vartheta_z)-1\right)} 
        \end{split}
    \end{equation}
    in polar coordinates \((r_y,\vartheta_y)\) of the annulus \(\DD\setminus \DD_{1-\bar\delta}\). After a change of variables \(u_y =\vartheta_y- \vartheta_z \) we have 
    \begin{equation}
        \begin{split}
            h_2(z) = \int_{M_\epsilon\setminus M_{\delta/4}}&\partial_{\nu_z}G_{0,\mathrm{prin}}(z;y) \frac{\beta^{(1)}(\theta(y))}{\rho_0(y)}\vol_{\bar{g}_0}(y) \\
            &= \int_{\sqrt{1-\delta/2}}^{\sqrt{1-2\epsilon}}\int_{-\pi}^\pi\partial_{r_z}G_{0,\mathrm{prin},\mathcal{A}}(\sqrt{1-2\epsilon},0;r_y,u_y)\frac{2r_y \beta^{(1)}(u_y+\vartheta_z)}{1-r_y^2}\dd u\dd r_y 
        \end{split}
    \end{equation}
    Then, since \(\beta^{(1)}\) is uniformly bounded on \(M\setminus M_\delta\), we can, with the use the triangle inequality for integrals and computer algebra systems, bound the inner integral (over the angle \(\vartheta_y\)) by 
    \begin{equation}
        \begin{split}
            |h_2(z)| &=\left|\int_{\sqrt{1-\delta/2}}^{\sqrt{1-2\epsilon}}\int_{-\pi}^\pi\partial_{r_z}G_{0,\mathrm{prin},\mathcal{A}}(\sqrt{1-2\epsilon},0;r_y,u)\frac{2r_y \beta^{(1)}(u+\theta_z)}{1-r_y^2}\dd u\dd r_y\right|  \\
            &\leq \int_{\sqrt{1-\delta/2}}^{\sqrt{1-2\epsilon}}\int_{-\pi}^\pi\left|\partial_{r_z}G_{0,\mathrm{prin},\mathcal{A}}(\sqrt{1-2\epsilon},0;r_y,u)\frac{2r_y \beta^{(1)}(u+\theta_z)}{1-r_y^2}\right|\dd u\dd r_y 
        \end{split}
    \end{equation}
    which by \cref{eq:g-0-prin-normalderiv} is equal to 
    \begin{equation}\label{eq:to-bound-normal-error}
        \int\limits_{\sqrt{1-\delta/2}}^{\sqrt{1-2\epsilon} }\int\limits_{-\pi}^\pi \left|\frac{2r_y \beta^{(1)}(u + \vartheta_z)\cdot \left( \left(r_y^2+1\right) \sqrt{1-2 \epsilon }+2 r_y (\epsilon -1) \cos (u) \right)}{2 \pi  \left(r_y^2-2 r_y \sqrt{1-2 \epsilon } \cos (u)-2 \epsilon +1\right) \left(r_y^2 (2 \epsilon -1)+2 r_y \sqrt{1-2 \epsilon } \cos (u)-1\right)} \right|\dd u \dd r_y \ .
    \end{equation}
    With computer algebra systems we find that the integral 
    \begin{equation}\label{eq:cas}
        \begin{split}
            \int_{-\pi}^\pi \bigg|&\frac{2r_y  \left( \left(r_y^2+1\right) \sqrt{1-2 \epsilon }+2 r_y (\epsilon -1) \cos (u) \right)}{2 \pi  \left(r_y^2-2 r_y \sqrt{1-2 \epsilon } \cos (u)-2 \epsilon +1\right) \left(r_y^2 (2 \epsilon -1)+2 r_y \sqrt{1-2 \epsilon } \cos (u)-1\right)} \bigg|\dd u \\
            &\qquad = \frac{ r_y}{\sqrt{1-2\epsilon}(1-r_y^2)} \ .
        \end{split}
    \end{equation}
    Bounding \(\beta^{(1)}(u + \vartheta_z) \) by a uniform constant \(C \), and plugging the result from \cref{eq:cas} into \cref{eq:to-bound-normal-error} we find that 
    \begin{equation*}
        \begin{split}
            \int_{\sqrt{1-\delta/2}}^{\sqrt{1-2\epsilon} }&\int_{-\pi}^\pi \left|\frac{2r_y \beta^{(1)}(u + \theta_z)\cdot \left( \left(r_y^2+1\right) \sqrt{1-2 \epsilon }+2 r_y (\epsilon -1) \cos (u) \right)}{2 \pi  \left(r_y^2-2 r_y \sqrt{1-2 \epsilon } \cos (u)-2 \epsilon +1\right) \left(r_y^2 (2 \epsilon -1)+2 r_y \sqrt{1-2 \epsilon } \cos (u)-1\right)} \right|\dd u \dd r_y\\
            &\leq  \int_{\sqrt{1-\delta/2}}^{\sqrt{1-2\epsilon}}\frac{C r_y}{\sqrt{1-2\epsilon}(1-r_y^2)}\dd r_y   \ .
        \end{split}
    \end{equation*}

    Hence  we conclude that 
    \begin{equation*}
        |h_2(z)| \leq  \int_{\sqrt{1-\delta/2}}^{\sqrt{1-2\epsilon}}\frac{C r_y}{\sqrt{1-2\epsilon}(1-r_y^2)}\dd r_y =\frac{-C \log\epsilon}{2\sqrt{1-2\epsilon}} + \mathcal{O}(1)
    \end{equation*}
    as \(\epsilon \to 0 \). Then, it follows that \(\sup_{z\in \partial M_\epsilon}|h_2(z)|=\mathcal{O}(-\log \epsilon  )\) as \(\epsilon\to 0 \).
\end{proof}

\begin{proof}[Proof of \cref{lem:normal_error_remainder}]
    By \cref{prop_part:greens decomp 2} of \cref{prop:greens decomposition}, and since \(\beta^{(1)}(\theta)\) is  bounded on \(\partial M \), 
    it follows that for \(z\in \partial M_\epsilon \) there are constants \(C_0',C_1',C_2'  \) such that 
    \begin{equation*}
        \sup_{z\in \partial M_\epsilon}\left|\partial_{\nu_z}G_{0,\rem}(z;y) \frac{\beta^{(1)}(\theta(y))}{\rho_0(y)}\right| \leq C_0' + C_1'\frac{(\epsilon+\rho_0(y))^2}{\rho_0(y)} + C_2'\frac{(\epsilon+\rho_0(y))^3}{\rho_0(y)}   \ .
    \end{equation*} 
    Hence there are constants \(D_0,D_1,D_2 \) such that  
    \begin{equation}
        \begin{split}
            \sup_{z\in \partial M_\epsilon}|h_3(z)|&=\sup_{z\in \partial M_\epsilon}\left|\int_{M_{\epsilon}\setminus M_{\delta/4}}\partial_{\nu_z}G_{0,\rem}(z;y) \frac{\beta^{(1)}(\theta(y))}{\rho_0(y)}\vol_{\bar g_0}(y)\right|\\
            &\leq \sup_{z\in \partial M_\epsilon} \int_{M_{\epsilon}\setminus M_{\delta/4}}\left|C_0' + C_1'\frac{(\epsilon+\rho_0(y))^2}{\rho_0(y)} + C_2'\frac{(\epsilon+\rho_0(y))^3}{\rho_0(y)}\right|\vol_{\bar g_0}(y)\\
            &\leq D_0 + D_1 \epsilon\log \epsilon + D_2 \epsilon^2 \log \epsilon \ .
        \end{split}
    \end{equation}
    Hence the estimate \(\sup_{z\in \partial M_\epsilon} |h_3(z)| = \mathcal O (1) \)
    holds as \(\epsilon\to 0 \).
\end{proof}
\begin{proof}[Proof of \cref{lem:normal_beta2}]
    Using the same decomposition of integrals as in \cref{lem:normal_error_far_away,lem:normal_error_principal,lem:normal_error_remainder} for \(h_4(z) \) from Equation \eqref{eq: uniformly bdd integrand}, we find that \(\sup_{z\in \partial M_\epsilon}|h_4(z)| = \mathcal{O}(1) \) as \(\epsilon\to 0 \).
\end{proof}
\begin{proposition}\label{prop:normal_w_eps}
    The following estimate holds
    \begin{equation}
        \|\partial_\nu w_\epsilon(z) \|_{L^2(\partial M_\epsilon)} = \mathcal{O}( -\log \epsilon) \ , \quad \text{as \(\epsilon\to 0 \).}
    \end{equation}
\end{proposition}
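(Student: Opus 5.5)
We will use the Green's representation of \(w_\epsilon\) on \(M_\epsilon\). Since \(u_\epsilon=0\) on \(\partial M_\epsilon\), and \(\chi\equiv 1\) near \(\partial M_\epsilon\) with \(v_\epsilon=\log\rho_0-\log\epsilon=0\) there, we have \(w_\epsilon|_{\partial M_\epsilon}=0\). By \cref{lem:w_eps_Laplacian}, \(w_\epsilon\) therefore solves a Dirichlet problem on \(M_\epsilon\) with right-hand side \(f_1\log\epsilon+f_2\).

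We represent \(w_\epsilon\) using the Dirichlet Green's function \(G_\epsilon\) of \(\Delta_{\bar g_0}\) on \(M_\epsilon\). We then compare \(G_\epsilon\) with \(G_0\). Writing \(G_\epsilon(x;y)=G_0(x;y)-H_\epsilon(x;y)\), the correction \(H_\epsilon(x;\cdot)\) is harmonic in \(M_\epsilon\) and has boundary values \(G_0(x;\cdot)|_{\partial M_\epsilon}\). By \cref{lem:G-0-epsilon} and \cref{prop:greens decomposition}, these boundary values are \(\mathcal O(\epsilon)\) for \(x\) away from the boundary. Near the boundary they are controlled by the explicit formula for \(G_{0,\prin,\A}\), namely the Poisson kernel on the disc of radius \(\sqrt{1-2\epsilon}\). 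This gives
\[
\partial_{\nu_z} w_\epsilon(z)=\int_{M_\epsilon}\partial_{\nu_z}G_0(z;y)\bigl(f_1\log\epsilon+f_2\bigr)\vol_{\bar g_0}(y)\;-\;\int_{M_\epsilon}\partial_{\nu_z}H_\epsilon(z;y)\bigl(f_1\log\epsilon+f_2\bigr)\vol_{\bar g_0}(y).
\]

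The first term is handled directly by \cref{lem:normal_integral}. It equals \(g_1(z)\log\epsilon+g_2(z)\), with both \(g_i\) uniformly \(\mathcal O(1)\) on \(\partial M_\epsilon\). Since the length of \(\partial M_\epsilon\) in \(\bar g_0\) is bounded by \(2\pi\), its \(L^2(\partial M_\epsilon)\) norm is \(\mathcal O(-\log\epsilon)\).

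The second term is the main obstacle. My first approach avoids estimating \(H_\epsilon\) pointwise by using an energy/trace argument instead. Let \(W_\epsilon\) be the function \(\int_{M_\epsilon}G_0(\cdot;y)(f_1\log\epsilon+f_2)\,dy\). It satisfies \(\Delta W_\epsilon=\Delta w_\epsilon\) in \(M_\epsilon\), and its trace on \(\partial M_\epsilon\) is small. Indeed, near the boundary \(W_\epsilon\) is the integral of \(G_0\) against the \(\beta^{(1)}/\rho_0\) singularity; by the computation pattern of \cref{lem:normal_error_principal}, this is of size \(\mathcal O(\epsilon\log\epsilon)\) together with its tangential derivatives.

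The difference \(w_\epsilon-W_\epsilon\) is then harmonic in \(M_\epsilon\), with Dirichlet data \(-W_\epsilon|_{\partial M_\epsilon}\). On the model annulus, the Dirichlet-to-Neumann map is explicit: it is multiplication by \(|k|\) on Fourier modes, up to smoothing errors. Using this, I would bound \(\|\partial_\nu(w_\epsilon-W_\epsilon)\|_{L^2}\) by \(C\|W_\epsilon\|_{H^1(\partial M_\epsilon)}\). This must be done with a constant uniform in \(\epsilon\), which is plausible because the collar metric \(\bar g_0\) is Euclidean and \(\partial M_\epsilon\) is a round circle in the model.

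If the trace of \(W_\epsilon\) turns out to be only \(\mathcal O(-\log\epsilon)\) rather than small in \(H^1\), the same bound still yields \(\mathcal O(-\log\epsilon)\), which suffices. Combining the two pieces by the triangle inequality gives \(\|\partial_\nu w_\epsilon\|_{L^2(\partial M_\epsilon)}=\mathcal O(-\log\epsilon)\).

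The delicate point is establishing the uniform-in-\(\epsilon\) Dirichlet-to-Neumann bound together with the tangential regularity of \(W_\epsilon\) on \(\partial M_\epsilon\). For the latter, I would differentiate the explicit kernel in \(\vartheta_z\) and repeat the angular integration carried out in \cref{eq:cas}.
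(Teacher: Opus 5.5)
Your argument is correct in outline, but it takes a genuinely different route from the paper's. The paper never uses the Dirichlet Green's function of \(M_\epsilon\) or a Dirichlet-to-Neumann map. It keeps \(G_0\) and writes \(w_\epsilon = W_\epsilon - \int_{\partial M_\epsilon} G_0(\cdot\,;y)\,\partial_\nu w_\epsilon(y)\vol_h(y)\). It then differentiates the single layer using its jump relation, which gives the second-kind equation \(\tfrac12(I+N_\epsilon^\#)\partial_\nu w_\epsilon = g_1\log\epsilon+g_2\). This equation is inverted uniformly in \(\epsilon\): the principal part of \(N_\epsilon^\#\) acts on Fourier modes by \((1-2\epsilon)^{|n|}\ge 0\), so \(\bigl(\tfrac12(I+N^\#_{\epsilon,\prin})\bigr)^{-1}\) has norm at most \(2\), and \(\|N^\#_{\epsilon,\rem}\|=\mathcal O(\epsilon)\). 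That route needs only \(\partial_\nu W_\epsilon\), i.e.\ \cref{lem:normal_integral}, and nothing about \(W_\epsilon|_{\partial M_\epsilon}\). Your identity \(\partial_\nu w_\epsilon=\partial_\nu W_\epsilon-\Lambda_\epsilon(W_\epsilon|_{\partial M_\epsilon})\) replaces that inversion by two facts you leave open, and both hold. First, the uniform bound \(\|\Lambda_\epsilon\phi\|_{L^2}\le C\|\phi\|_{H^1}\) follows by cutting off the Euclidean Poisson extension in the collar. Its normal derivative is the multiplier \(|k|/\sqrt{1-2\epsilon}\). You then correct by a Dirichlet solution whose source is supported at a fixed distance from \(\partial M_\epsilon\), so its normal derivative is \(\mathcal O(\|\phi\|_{L^2})\) uniformly. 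Second, the trace: the \(f_1\log\epsilon\) part of \(W_\epsilon\) equals \((\chi-1)\log\epsilon\) by \cref{lem:w_eps_Laplacian} and vanishes on \(\partial M_\epsilon\). Hence \(W_\epsilon|_{\partial M_\epsilon}=\tilde u_\epsilon|_{\partial M_\epsilon}=\mathcal O(\epsilon\log\epsilon)\). For the tangential derivative, an absolute-value angular integration modelled on \cref{eq:cas} only gives \(\mathcal O(1)\). The \(\epsilon\log\epsilon\) smallness you claim needs the oddness of \(\partial_{\vartheta_z}G_{0,\prin,\A}\) in \(\vartheta_z-\vartheta_y\), but \(\mathcal O(1)\) is all you need. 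Your route shows transparently that the whole \(\log\epsilon\) comes from \(\partial_\nu W_\epsilon\) while the harmonic correction stays bounded. It also avoids the jump relations and the exact spectral computation of \(N_\epsilon^\#\). Its cost is the extra tangential estimate on \(W_\epsilon|_{\partial M_\epsilon}\), which the paper's second-kind equation sidesteps.
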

We prove this proposition using layer potential theory \cite[Chapter 7, Section 11]{taylor2011pde2}.
\begin{definition}\label{def:N-sharp}
    Let \(N_\epsilon^\# \) be the double-layer potential 
    \begin{equation}\label{eq:N-sharp-def}
        (N^\#_\epsilon f)(z) = 2\int_{\partial M_\epsilon}\partial_{\nu_z} G_0(z;y)f(y)\vol_{h}(y) \ .
    \end{equation}
    Then \(N_\epsilon^\#\) is a pseudodifferential operator of order \(-1\) on \(\partial M_\epsilon\)~\cite[Chapter 7, Proposition 11.3]{taylor2011pde2}.
    
    For \(f \in C^\infty(\partial M_\epsilon )\) we write
    \begin{equation}
        (N_\epsilon^\# f)(z) = (N_{\epsilon,\mathrm{prin}}^\# f)(z) + (N^\#_{\epsilon,\mathrm{rem}} f)(z)\ ,
    \end{equation}
    with 
    \begin{align}
        (N_{\epsilon,\mathrm{prin}}^\# f)(z) &= 2\int_{\partial M_\epsilon} \partial_{\nu_z} G_{0,\mathrm{prin}}(z;y) f(y)\vol_h(y)
        \ ,
        \intertext{and}
        (N_{\epsilon,\mathrm{rem}}^\# f)(z) &= 2\int_{\partial M_\epsilon} \partial_{\nu_z} G_{0,\mathrm{rem}}(z;y) f(y)\vol_h(y) \ ,
    \end{align}
    where \(G_{0,\mathrm{prin}}\) and \(G_{0,\mathrm{rem}}\) are the principal and remainder parts of the Green's function from \cref{def:Greens_principal}.
\end{definition}
\begin{definition}\label{def:fourier}
    For \(f\in L^2(\partial M_\epsilon)\) we define the Fourier transform \(\hat{f}\in L^2(\ZZ)\) by the isometry \(\Phi \) from \cref{def:rho0}. In particular:
    \begin{equation}
        \hat{f} = \mathcal{F} \circ (\Phi^{-1})^*\ ,
    \end{equation}
    where \(\mathcal F \) is the Fourier transform on the circle of radius \(\sqrt{1-2\epsilon}\) given by 
    \begin{equation*}
        \hat{g}(n) = \frac{1}{2\pi \sqrt{1-2\epsilon}} \int_{-\pi}^\pi e^{-in \vartheta} g(\vartheta) \sqrt{1-2\epsilon}\dd \vartheta
    \end{equation*}
    for \(g(\vartheta)\in L^2(\partial\DD_{\sqrt{1-2\epsilon}})\).
\end{definition}
\begin{proposition}\label{prop:double-layer-N-sharp}
    \begin{enumerate}
        \item \label{prop_part:double-layer-N-sharp-prin} For \(f\in L^2(\partial M_\epsilon)\) we have 
        \begin{equation}
            (N_{\epsilon,\mathrm{prin}}^\# f)^\wedge (n) = (1-2\epsilon)^{|n|}\cdot \hat{f}(n) \ .
        \end{equation}        
        \item \label{prop_part:double-layer-N-sharp-rem}  The estimate 
        \[
            \|N_{\epsilon,\mathrm{rem}}^\#\|_{L^2(\partial M_\epsilon)\to L^2(\partial M_\epsilon)} = \mathcal{O}(\epsilon)
        \]
        on the operator  \(N_{\epsilon,\rem}^\#\) holds as \(\epsilon \to 0 \).
    \end{enumerate}
\end{proposition}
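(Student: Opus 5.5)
For \cref{prop_part:double-layer-N-sharp-prin} I would compute on the model annulus via \(\Phi\). By \cref{lem:G0prin-pullback}, \(G_{0,\prin}(z;y)=\kappa(z;y)G_{0,\prin,\A}(\Phi(z);\Phi(y))\), and for \(z,y\in\partial M_\epsilon\) with \(\epsilon\ll\delta\) we have \(\kappa_1\equiv1\). I would first treat \(\kappa_0\equiv 1\), i.e.\ take the full disc Green's function restricted to the circle \(r=\sqrt{1-2\epsilon}\), and regard the remaining cut-off error \((1-\kappa_0)\partial_\nu G_{0,\A}\) as a smooth kernel absorbed into \(N^\#_{\epsilon,\rem}\).

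On the circle \(|z|=|y|=s:=\sqrt{1-2\epsilon}\), \(\partial_{r_z}G_{0,\prin,\A}\) is given by \eqref{eq:g-0-prin-normalderiv}, which is a function of \(u=\vartheta_y-\vartheta_z\) alone. Hence \(N^\#_{\epsilon,\prin}\) is a convolution operator on the circle and is diagonalised by the Fourier transform of \cref{def:fourier}. I would use the classical Poisson-kernel expansions
\[
\log|z-y| = \log s-\sum_{n\ge1}\tfrac1n (r_</r_>)^n\cos(nu),
\qquad
\log\bigl||z|\,(y-z/|z|^2)\bigr| = -\sum_{n\ge1}\tfrac1n (|z||y|)^n\cos (nu)
\]
(for \(|z|\,|y|<1\)), differentiate in \(r_z\), take the appropriate one-sided limit \(r_z\to s\) (the outward normal on \(\partial M_\epsilon\) is \(-\partial_{\rho_0}\), i.e.\ \(\partial_{r}\) up to the Jacobian \(r\)), include the factor \(2\) and the arclength \(s\,\dd\vartheta\), and read off the Fourier multiplier. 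The jump relation for the double layer supplies the \(n=0\) and the ``\(1\)'' part of the symbol. The multiplier should come out as \(\tfrac12\bigl(1+s^{2|n|}\bigr)\cdot 2/2\), which in the paper's normalisation should equal \((1-2\epsilon)^{|n|}=s^{2|n|}\). I would reconcile the constants by checking the \(n=0\) case against \(\int_{\partial M_\epsilon}\partial_\nu G_0\,\vol_h=1\).

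For \cref{prop_part:double-layer-N-sharp-rem}, the kernel of \(N^\#_{\epsilon,\rem}\) is \(2\partial_{\nu_z}G_{0,\rem}(z;y)\) with \(z,y\in\partial M_\epsilon\), plus the smooth cut-off correction noted above. By \cref{prop_part:greens decomp 2} of \cref{prop:greens decomposition} with \(\rho_0(y)=\epsilon\), this kernel is bounded by \(C_0\epsilon+C_1(2\epsilon)^2+C_2(2\epsilon)^3=\mathcal O(\epsilon)\), uniformly in \(z,y\). For the cut-off correction, which is supported where \(d(z,y)>\delta/4\), I would use vanishing on the boundary exactly as in \cref{lem:G-0-epsilon}: Taylor expansion in \(\rho_0(y)\) together with the symmetry of the kernel again gives \(\mathcal O(\epsilon)\). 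Since \(\partial M_\epsilon\) has length bounded uniformly in \(\epsilon\), Schur's test (or Hilbert--Schmidt) converts the uniform \(\mathcal O(\epsilon)\) kernel bound into \(\|N^\#_{\epsilon,\rem}\|_{L^2\to L^2}=\mathcal O(\epsilon)\).

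The main obstacle is the first part: getting the exact multiplier \((1-2\epsilon)^{|n|}\) requires careful handling of the singular limit \(r_z\to s\) (jump relation), the normal direction and orientation conventions, and the normalisation in \cref{def:fourier}. I would settle these by checking them against the explicit formula \eqref{eq:g-0-prin-normalderiv}, in particular by verifying the zero mode.
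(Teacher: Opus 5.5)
\textbf{Part 1 has a genuine gap.} You never pin down which operator you are computing, and the procedure you describe does not produce the stated symbol.

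In the paper's definition, $N^\#_{\epsilon,\prin}$ has kernel $2\partial_{\nu_z}G_{0,\prin}(z;y)$, evaluated \emph{directly} with both $z$ and $y$ on the circle $r=s=\sqrt{1-2\epsilon}$. There is no limit and no jump term. In two dimensions this kernel is bounded on a smooth curve; indeed $\partial_{r_z}\log|z-y|=1/(2s)$ exactly when $|z|=|y|=s$, $z\neq y$. This gives
\[
2s\,\partial_{r_z}G_{0,\prin,\A}(z;y)\big|_{|z|=|y|=s}=\frac{1}{2\pi}\,\frac{1-s^4}{1-2s^2\cos u+s^4},
\]
which is $2s$ times the closed form \eqref{eq:kernel_normal_derivative}. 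This is the Poisson kernel at radius $s^2=1-2\epsilon$, whose Fourier coefficients are $(1-2\epsilon)^{|n|}$; the paper obtains them by residues.

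If you instead take a one-sided limit $r_z\to s^{\pm}$ and add the jump, you get the symbol $s^{2|n|}\pm 1$, not $(1-2\epsilon)^{|n|}$. Your predicted $\tfrac12(1+s^{2|n|})$ is the symbol of $A_\epsilon=\tfrac12(I+N^\#_{\epsilon,\prin})$, the operator inverted in the proof of Proposition~\ref{prop:normal_w_eps}, not of $N^\#_{\epsilon,\prin}$. It does not equal $s^{2|n|}$ for $n\neq 0$. Your proposed $n=0$ check cannot detect this, since both expressions equal $1$ at $n=0$.

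\textbf{How to repair your series route.} Keep $\log r_>$ rather than $\log s$ in the expansion of $\log|z-y|$; its $r_z$-derivative is what produces the constant term. Then average the two one-sided derivatives, so the $\delta(u)$ contributions cancel. This yields $2s\,\partial_{r_z}G_{0,\prin,\A}=\frac{1}{2\pi}\bigl(1+2\sum_{n\ge 1}s^{2n}\cos nu\bigr)$, from which $(1-2\epsilon)^{|n|}$ can be read off. This is an equivalent, arguably more transparent, alternative to the residue computation, but only once the point above is settled.

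\textbf{Part 2 is essentially the paper's argument.} You use the kernel bound from the decomposition of $G_0$ at $\rho_0(y)=\epsilon$, then a Schur/Hilbert--Schmidt bound using the bounded length of $\partial M_\epsilon$. Moving $(1-\kappa_0)\,\partial_{\nu_z}G_{0,\prin,\A}$ into the remainder is in fact more careful than the appendix, which silently computes with the uncut $G_{0,\prin,\A}$ on the whole circle. By the formula above this correction is $\mathcal{O}(\epsilon)$: $1-s^4=4\epsilon(1-\epsilon)$, and $1-2s^2\cos u+s^4\geq |z-y|^2$ is bounded below by a constant depending only on $\delta$ on the support of $1-\kappa_0$.
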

The proof to  \cref{prop:double-layer-N-sharp} is given in  \cref{app:double-layer-N-sharp-pf-general}. 
We continue with the 
\begin{proof}[Proof of  \cref{prop:normal_w_eps}]
    We begin with Green's formula 
    \begin{equation}
        w_\epsilon(x) + \int_{\partial M_\epsilon}G_0(x;y) \partial_{\nu_y} w_\epsilon(y) \vol_h(y)  = \int_{M_\epsilon}{G_0(x;y)}(\Delta_{\bar{g}_0}w_\epsilon)(y) \vol_{\bar{g}_0}(y)
    \end{equation}
    take  \(x \in M_\epsilon \) to some point \(z \in \partial M_\epsilon \) and apply the outward normal derivative \(\partial_{\nu }\) at that \(z\in \partial M_\epsilon\).
    Using \cite[Chapter 7, Proposition 11.3]{taylor2011pde2} 
    we find with \cref{eq: Delta of w epsilon} that  
    \begin{equation}
        \begin{split}
            \partial_{\nu} w_\epsilon(z) + \frac{1}{2}( -\partial_{\nu}w_\epsilon(z) &+ N_\epsilon^\# (\partial_{\nu}w)(z)) \\
            &= \partial_{\nu}\int_{M_\epsilon}{G_0(z;y)}(f_1(y)\log \epsilon + f_2(y))\vol_{\bar{g }_0}(y) \ .
        \end{split}
    \end{equation}
    On the left-hand side we apply  \cref{prop:double-layer-N-sharp}, whereas on the right-hand side we pull the normal derivative into the integral, as the integral is absolutely convergent, and then apply  \cref{lem:normal_integral}. The conclusion is that for \(\epsilon \) sufficiently small
    \begin{equation}
    \label{eq: integral eq with layer potential}
        \frac{1}{2}\left(\partial_{\nu} w_\epsilon(z) +  N_\epsilon^\# (\partial_\nu w_\epsilon)(z)\right) = g_1(z) \log \epsilon + g_2(z)\ .
    \end{equation}
    
    Now, since we know by \cref{prop_part:double-layer-N-sharp-prin} of  \cref{prop:double-layer-N-sharp} that \(({N}_{\epsilon,\mathrm{prin}}^\# f)^\wedge (n) = (1-2\epsilon)^{|n|}\hat{f}(n)\), it follows that the inverse of the operator \(A_\epsilon := \frac{1}{2}(I + N_{\epsilon,\mathrm{prin}}^\#)\) satisfies the bound 
    \begin{equation}
        \begin{split}
            \|A_\epsilon^{-1} f \|_{L^2(\partial M_\epsilon)}^2&= \left\|\sum_{k\in \ZZ}\frac{2e^{in\theta}}{1+\hat{N}_{\epsilon,\mathrm{prin}}^\#(n)} \hat{f}(n)\right\|_{L^2(\partial M_\epsilon)}^2\leq 
            |\partial M_\epsilon|_h \sum_{k\in \ZZ}\frac{4|\hat{f}(n)|^2}{(1+(1-2\epsilon)^{|n|})^2}\\
            & \leq 4|\partial M_\epsilon|_h \sum_{k\in \ZZ}|\hat{f}(n)|^2 = 4 |\partial M_\epsilon|_h \|\hat{f}\|_{L^2(\ZZ)}^2 = 4 \|f\|_{L^2(\partial M_\epsilon)}^2 \ , 
        \end{split}
    \end{equation}
    by Parseval's identity: \(\|f \|_{L^2(\partial M_\epsilon)}^2 =|\partial M_\epsilon|_h \|\hat{f }\|_{L^2(\ZZ)}^2  \).
    Thus, by the Neumann series for operators we have 
    \begin{equation}
        \left(\frac{1}{2}(I + N_\epsilon^\# )\right)^{-1} = \left(A_\epsilon + \frac{1}{2} N_{\epsilon,\rem}^\#\right)^{-1} = A_\epsilon^{-1}\sum_{k=0}^\infty \left(-\frac{1}{2}N_{\epsilon,\rem}^\# A_\epsilon^{-1}\right)^k \ ,
    \end{equation}
    with 
    \begin{equation}
        \begin{split}
            \left\| \left(\frac{1}{2}(I + N_\epsilon^\# )\right)^{-1} \right\|_{L^2\to L^2} &\leq \|A_\epsilon^{-1}\|_{L^2\to L^2} \cdot \left\|\sum_{k=0}^\infty \left(-\frac{1}{2}N_{\epsilon,\rem}^\# A_\epsilon^{-1}\right)^k \right\|_{L^2\to L^2}\\
            &\leq 2 +   \mathcal{O}(\epsilon)  \leq 3
        \end{split}\ ,
    \end{equation}
    for sufficiently small \(\epsilon \), 
    because \(\|N_{\epsilon,\rem}^\#\|_{L^2\to L^2} = \mathcal{O}(\epsilon )\) as \(\epsilon\to 0\) by \cref{prop_part:double-layer-N-sharp-rem} of \cref{prop:double-layer-N-sharp}. 

    To find asymptotics of \(\partial_\nu w_\epsilon \), we apply the inverse \(\left( \frac{1}{2} I + N_\epsilon^\#  \right)^{-1} \) to both sides of  \eqref{eq: integral eq with layer potential}. We find that 
    \begin{equation}
        \begin{split}
            \|\partial_\nu w_\epsilon(z)\|_{L^2(\partial M_\epsilon)} 
            &=  \left\|\left(\frac{1}{2}(I+N_\epsilon^\#)\right)^{-1}(g_1(z)\log\epsilon + g_2(z))\right\|_{L^2(\partial M_\epsilon)}\\
            &\leq \left\| \left(\frac{1}{2}(I + N_\epsilon^\# )\right)^{-1} \right\|_{L^2\to L^2}\cdot \left\|g_1(z)\log\epsilon + g_2(z)\right\|_{L^2(\partial M_\epsilon)}\\
            &\leq 3 \| g_1(z) \log \epsilon + g_2 (z) \|_{L^2(\partial M_\epsilon )} 
            = \mathcal{O}(-\log \epsilon ) \quad \text{as \(\epsilon \to 0 \),}
        \end{split}
    \end{equation}
    because \(\|g_1\log \epsilon + g_2\|_{L^2(\partial M_\epsilon)}\leq |\partial M_\epsilon|_h\|g_1\log \epsilon + g_2\|_{L^\infty(\partial M_\epsilon)} = \mathcal O(\log \epsilon)  \) as \(\epsilon\to 0 \) by \cref{lem:normal_integral}.
\end{proof}
We now only need one more proposition to prove  \cref{thm:hyperbolic}.
\begin{proposition}\label{prop:int-G0-delta-w}
    Let \(\tilde u_\epsilon(x) \) be the function from \cref{prop_part:w_eps_laplacian_4} of \cref{lem:w_eps_Laplacian}. For \(x\in  M^\circ  \), there is a function \(r_\epsilon(x) \) such that  
    \begin{equation}
        w_\epsilon(x) =  (\chi(x)-1) \log \epsilon + \tilde u_\epsilon(x) + r_\epsilon(x)  \ ,
    \end{equation}
    with  a positive constant \(D_x \) such that \(|r_\epsilon(x)|\leq D_x \epsilon \log \epsilon\)
    as \(\epsilon\to 0 \). If \(K \Subset M^\circ  \) is compact then there is  positive constant \(D_K \) such that \(\sup_{x\in K }|r_\epsilon(x)| \leq D_K \epsilon\log \epsilon\).
\end{proposition}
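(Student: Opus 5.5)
Starting from Green's representation formula on \(M_\epsilon\) for the metric \(\bar g_0\), applied to \(w_\epsilon\) at an interior point \(x\):
\begin{equation*}
    w_\epsilon(x) = \int_{M_\epsilon} G_0(x;y)\,(\Delta_{\bar g_0} w_\epsilon)(y)\vol_{\bar g_0}(y) - \int_{\partial M_\epsilon} G_0(x;y)\,\partial_{\nu_y} w_\epsilon(y)\vol_h(y) \ .
\end{equation*}
This formula holds because \(w_\epsilon|_{\partial M_\epsilon}=0\). Indeed, \(u_\epsilon=0\) there, and \(\chi\equiv 1\), \(v_\epsilon=\log\rho_0-\log\epsilon=0\) on \(\partial M_\epsilon\). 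A further correction comes from \(G_0(x;\cdot)\) not vanishing on \(\partial M_\epsilon\): it vanishes on \(\partial M\), so we keep the boundary term explicitly. Substituting \cref{prop_part:w_eps_laplacian_1} of \cref{lem:w_eps_Laplacian}, the volume integral becomes \(\log\epsilon\, I_1(x) + I_2(x)\). By \cref{prop_part:w_eps_laplacian_2} this is \((\chi(x)-1)\log\epsilon + \tilde u_\epsilon(x)\). Hence the natural definition is
\begin{equation*}
    r_\epsilon(x) := -\int_{\partial M_\epsilon} G_0(x;y)\,\partial_{\nu_y} w_\epsilon(y)\vol_h(y) \ ,
\end{equation*}
and the whole task reduces to bounding this boundary integral by \(D_x\,\epsilon|\log\epsilon|\).

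For the estimate I would use Cauchy--Schwarz on \(\partial M_\epsilon\):
\begin{equation*}
    |r_\epsilon(x)| \leq \|G_0(x;\cdot)\|_{L^2(\partial M_\epsilon)}\,\|\partial_\nu w_\epsilon\|_{L^2(\partial M_\epsilon)} \ .
\end{equation*}
The second factor is \(\mathcal O(-\log\epsilon)\) by \cref{prop:normal_w_eps}. For the first factor I use \cref{lem:G-0-epsilon}. For \(y\in\partial M_\epsilon\) we have \(\rho_0(y)=\epsilon\ll\rho_0(x)\), so \(|G_0(x;y)|\leq C_x\epsilon\). Since \(|\partial M_\epsilon|_h\) stays bounded (it tends to the length of \(\partial M\), as \(\partial M_\epsilon\) is the circle of radius \(\sqrt{1-2\epsilon}\) in the model), this gives \(\|G_0(x;\cdot)\|_{L^2(\partial M_\epsilon)}\leq C_x\epsilon\,|\partial M_\epsilon|_h^{1/2}\). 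Multiplying the two factors yields \(|r_\epsilon(x)|\leq D_x\epsilon|\log\epsilon|\), which is the claimed bound (with the sign convention of the statement read as absolute value). For compact \(K\Subset M^\circ\), the same argument with the uniform constant \(C_K\) from \cref{lem:G-0-epsilon} gives the uniform estimate, because \(\min_K\rho_0>0\) and \(\epsilon\to0\).

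I expect the main obstacle to be justifying the representation formula itself rather than the estimate. The function \(\Delta_{\bar g_0}w_\epsilon\) contains the term \(\beta^{(1)}/\rho_0\), which is unbounded near \(\partial M\) but bounded on \(M_\epsilon\) for fixed \(\epsilon\). There is also the question of whether \(w_\epsilon\) is regular enough up to \(\partial M_\epsilon\) to apply Green's identity. On \(M_\epsilon\) everything is smooth up to the boundary for fixed \(\epsilon\): \(u_\epsilon\in H^2\) and the data are smooth, so elliptic boundary regularity applies. I would therefore apply Green's formula with \(G_0(x;\cdot)\) by excising a small ball around \(x\) in the standard way. The log singularity of \(G_0\) at \(x\) contributes exactly \(w_\epsilon(x)\) in the limit, and the sign conventions must be checked against the Green's formula stated in the notation section.
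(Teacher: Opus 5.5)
Your proposal is correct and follows the paper's proof essentially step for step: Green's representation formula on \(M_\epsilon\) with the Dirichlet Green's function \(G_0\) of \(M\), substitution of \cref{lem:w_eps_Laplacian} to obtain \((\chi(x)-1)\log\epsilon+\tilde u_\epsilon(x)\), defining \(r_\epsilon\) as the remaining boundary term, and bounding it by Cauchy--Schwarz together with \cref{prop:normal_w_eps} and \cref{lem:G-0-epsilon}. Two of your choices are slightly cleaner than the paper's: reading the bound as \(D_x\,\epsilon|\log\epsilon|\), and using the uniform constant \(C_K\) of \cref{lem:G-0-epsilon} for the compact case instead of the paper's \(D_K=\max_{x\in K}D_x\).
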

\begin{proof}
    By Green's formula we have
    \begin{equation}
        \begin{split}
            \label{eq:w_eps-greens-formula}
            w_\epsilon(x) 
            = \int_{M_\epsilon}G_0(x;y)\Delta_{\bar g_0 ,y}w_\epsilon(y)\vol_{\bar g_0}(y)-\int_{\partial M_\epsilon}G_0(x;y) \partial_\nu w_\epsilon(y) \vol_h(y) \ .
        \end{split}
    \end{equation}
    By \cref{prop_part:w_eps_laplacian_1} of \cref{lem:w_eps_Laplacian} we have that 
    \begin{equation}\label{eq: w-eps-laplacian}
        \int_{M_\epsilon} G_0(x;y) \Delta_{\bar g_0, y }w_\epsilon(y) \vol_{\bar g_0} (y)  = \int_{M_\epsilon} G_0(x;y) (f_1(y) \log\epsilon + f_2(y)) \vol_{\bar g_0}(y)  \ .
    \end{equation}
    By \cref{prop_part:w_eps_laplacian_2} of \cref{lem:w_eps_Laplacian} we have 
    \begin{equation}\label{eq: f1-chi-1}
         \int_{M_\epsilon} G_0(x;y) f_1(y)\log \epsilon \vol_{\bar{g}_0}(y) = \left( \chi(x) -1     \right)\log \epsilon \ .
    \end{equation}
    By \cref{prop_part:w_eps_laplacian_3} of \cref{lem:w_eps_Laplacian} we have that for each \(x\in M^\circ \)  the function 
    \begin{equation}\label{eq: f2 asymptotics}
        \tilde u_\epsilon(x) = \int_{M_\epsilon} G_0(x;y) f_2(y) \vol_{\bar g_0 }(y) = \mathcal{O}(1) \quad \text{as \(\epsilon\to 0 \).}
    \end{equation}
    If \(K\Subset M^\circ \), then \(\tilde u _\epsilon(x) \) converges uniformly on \(K \) to \(\tilde u(x) \in C^\infty(K) \) by \cref{prop_part:w_eps_laplacian_4} of \cref{lem:w_eps_Laplacian}.

    Setting \(r_\epsilon(x) \) to be 
    \begin{equation}\label{eq:r-eps-def}
        r_\epsilon(x):= - \int_{\partial M_\epsilon} G_0(x;y) \partial_\nu w_\epsilon(y) \vol_h(y) \ ,
    \end{equation}
    we find by  the Cauchy-Schwartz inequality  that
    \begin{equation}\label{eq:r-eps-cs}
        \begin{split}
            |r_\epsilon(x) | = \left|\int_{\partial M_\epsilon}G_0(x;y)\partial_\nu w_\epsilon(y) \vol_h(y) \right|&\leq \|G_0(x;\cdot)\|_{L^2(\partial M_\epsilon)} \cdot \| \partial w_\epsilon\|_{L^2(\partial M_\epsilon)} \ .
        \end{split}
    \end{equation}
    Because 
    \(\|\partial_\nu w_\epsilon(z)\|_{L^2(\partial M_\epsilon)} = \mathcal{O}(-\log \epsilon)\) as \(\epsilon\to 0 \) and because  \(\sup_{y\in \partial M_\epsilon}|G_0(x;y)| \leq C_x \cdot \epsilon\) as \(\epsilon\to 0 \) by  \cref{lem:G-0-epsilon}, it follows that there is a positive constant \(D_x \) such that 
    \begin{equation}\label{eq:r-eps-asymptotics}
        |r_\epsilon(x) | \leq D_x \epsilon\log \epsilon \quad \text{as \(\epsilon \to 0 \).}
    \end{equation}
    If \(K \Subset M^\circ \) is compact, then once again taking \( D_K = \max_{x\in K }D_x \) we conclude that 
    \(\sup_{x\in K } |r_\epsilon(x) | \leq D_K \epsilon \log \epsilon \).

    Plugging \cref{eq: w-eps-laplacian,eq: f1-chi-1,eq: f2 asymptotics} into the first integral on the right hand side of \cref{eq:w_eps-greens-formula}, and \cref{,eq:r-eps-def} into the second integral in \cref{eq:w_eps-greens-formula}, we have 
    \begin{equation}
        \begin{split}
            w_\epsilon(x) &=  \int_{M_\epsilon}G_0(x;y) (f_1(y) \log \epsilon+f_2(y))\vol_{\bar{g}_0}(y) -\int_{\partial M_\epsilon} G_0(x;y) \partial_\nu w_\epsilon(y) \vol_h(y)\\
            & = (\chi(x)-1) \log \epsilon + \tilde{u}_\epsilon(x) +r_\epsilon(x)   \ ,
        \end{split}
    \end{equation}
    with \(|r_\epsilon(x) |\leq D_x \epsilon\log \epsilon \) by \cref{eq:r-eps-asymptotics}. Furthermore, for \(K \Subset M ^\circ \), there is a constant \(D_K \) such that \(|r_\epsilon(x)|\leq D_K \epsilon\log \epsilon\) and we have that  \(\tilde{u}_\epsilon\) converges  uniformly to some smooth bounded function \(\tilde{u}(x) \) on \(K \).
\end{proof}
\begin{proof}[Proof of \cref{thm:hyperbolic}]
    We have by the definition of \(w_\epsilon(x) \) in \cref{lem:w_eps_Laplacian} that 
    \begin{equation}
        u_\epsilon(x)  = \chi(x) v_\epsilon(x) + w_\epsilon(x)  \ .
    \end{equation}
    Thus by \cref{eq:v-epsilon-def,prop:int-G0-delta-w} we find for each \(x\in M^\circ\) that 
    \begin{equation}
        u_\epsilon(x) = -\log \epsilon +\chi(x) \log \rho_0(x)+ \tilde{u}_\epsilon(x)+ r_\epsilon(x)
    \end{equation}
    with \(|r_\epsilon(x)|\leq D_x \epsilon \log \epsilon   \) and \(\tilde u_\epsilon(x) = \mathcal O (1) \)    as \(\epsilon\to 0 \) by \cref{prop:int-G0-delta-w}.

    Finally, 
    if \(K \Subset M^\circ \) is  compact, then there is a constant \(D_K \) such that \(\sup_{x\in K } |r_\epsilon(x)| \leq D_K \epsilon\log \epsilon \) and 
    the function
    \begin{equation}
        \tilde U_\epsilon : K \to \RR: x \mapsto \chi(x) \log \rho_0(x) + \tilde u_\epsilon (x)
    \end{equation} 
    converges uniformly 
    to the smooth and  bounded function
    \begin{equation}
        \tilde{U}: K \to \RR: x\mapsto \chi(x) \log \rho_0(x) + \tilde u(x) 
    \end{equation}
    by applying \cref{prop_part:w_eps_laplacian_4} of \cref{lem:w_eps_Laplacian} to \(\tilde u_\epsilon \) and noting that \(\chi(x)\log \rho_0(x) \) is bounded and smooth on \(K \).
\end{proof}
\section{Gas giant geometries}\label{sec:ggg}
In the following sections we turn to gas giant geometries introduced in \cite{dehoop2024}. A Riemannian manifold \((M,\partial M , g )\) has a gas giant metric \(g \) of order \(\alpha\in (0,2) \) if it can be written in the form 
\begin{equation}
    g = \frac{\bar{g }}{\rho^\alpha}
\end{equation}
for some boundary defining function \(\rho \) and some smooth non-degenerate metric \(\bar{g }\). For \(\epsilon>0 \) we once again set \(M_\epsilon \) to be the Riemannian manifold with boundary \(\{x\in M : \rho(x) \geq \epsilon \}\). We once again wish to study the behaviour of the mean first escape time \(\EE(\tau_\epsilon^x)\) of the Brownian motion \((X_t^x,\PP^x_t)\) on \((M,\partial M,g)\) starting at \(x\in M \) as \(\epsilon\to 0 \). Once again, by \cite[Appendix A]{nursultanov2021mean}, \(u_\epsilon(x)= \EE(\tau_\epsilon^x )\) satisfies the boundary value problem \eqref{eq:bvp} 
\begin{equation*}
    \Delta_g u_\epsilon(x)|_{x\in M_\epsilon^\circ }= -1 , \quad u_\epsilon(x)|_{x\in \partial M_\epsilon} = 0 \quad \text{for \(u\in H^2(M_\epsilon)\cap H^1_0(M_\epsilon)\)} \ .
\end{equation*}
The following theorem shows that the behaviour of Brownian motion on gas giant surfaces differs significantly from Brownian motion on asymptotically hyperbolic surfaces.
\begin{theorem}
    \label{thm:ggg}
    Assume that \(u_\epsilon(x) \) satisfies the boundary value problem \eqref{eq:bvp} for a gas giant metric \(g \) of order \(\alpha \), then there are functions \(\tilde U_{\epsilon,\alpha }(x),r_{\epsilon,\alpha}(x) \) such that 
    \begin{equation}
        u_\epsilon(x) =\tilde U_{\epsilon,\alpha}(x) + r_{\epsilon,\alpha}(x) \ ,
    \end{equation}
    with for each fixed \(x\in M^\circ \) the function \(\tilde U_{\epsilon,\alpha}(x) = \mathcal O(1)\) as \(\epsilon\to 0 \) and for each fixed \(x\in M^\circ \) there is a constant \(D_{x}\) such that \(|r(x)|\leq D_x \epsilon \). Finally, if \(K \Subset M^\circ \) is compact, then there is a smooth function \(\tilde U_\alpha \in C^\infty(K)\) and a constant \(D_K \)  such that \(\tilde U_{\epsilon,\alpha}(x)\) converges uniformly on \(K \) to \(\tilde U_\alpha \)  and \(\sup_{x\in K }|r_\epsilon(x)| \leq D_K \epsilon\).
\end{theorem}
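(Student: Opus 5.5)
Our plan is to repeat the structure of the proof of \cref{thm:hyperbolic}, now for the conformal factor $\rho^{-\alpha}$. The key simplification is that the source term is integrable, so no explicit leading term $v_\epsilon$ is needed.

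\textbf{Step 1: rewrite the equation.} As in the derivation of \eqref{eq:bvp_simplified}, conformal invariance of the Laplacian in dimension two and \cref{lem: constants beta} turn \eqref{eq:bvp} into
\[
\Delta_{\bar g_0}u_\epsilon=-f_\alpha, \qquad f_\alpha:=\frac{\beta}{\rho^\alpha}=\frac{c^{\alpha}\beta(\rho_0,\theta)}{\rho_0^\alpha}\bigl(1+\mathcal O(\rho_0)\bigr), \qquad u_\epsilon|_{\partial M_\epsilon}=0 .
\]
Since $\alpha<2$, we have $f_\alpha\in L^1(M,\bar g_0)$. For $\alpha<1$ we even have $f_\alpha\in L^2$. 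For $\alpha\in[1,2)$, Hardy's inequality (\cref{prop:hardy-inequality}) gives $f_\alpha\in H^{-1}(M,\bar g_0)$: indeed $|\int f_\alpha\varphi|\le \|\varphi/\rho_0\|_{L^2}\,\|\rho_0^{1-\alpha}\|_{L^2}$, and $\rho_0^{1-\alpha}\in L^2$ because $2-2\alpha>-1$.

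\textbf{Step 2: the leading term.} We take $w_\epsilon:=u_\epsilon$ directly and set
\[
\tilde U_{\epsilon,\alpha}(x):=-\int_{M_\epsilon}G_0(x;y)f_\alpha(y)\vol_{\bar g_0}(y).
\]
We show that it converges to $\tilde U_\alpha:=-\int_M G_0 f_\alpha$ by copying parts 3--4 of \cref{lem:w_eps_Laplacian}. Pointwise convergence follows from dominated convergence, since $|G_0(x;y)|\le C_x\rho_0(y)$ by \cref{lem:G-0-epsilon} and $\rho_0^{1-\alpha}$ is integrable. Convergence in $H^1_0$ follows from \cref{prop:elliptic_regularity_p-type_sobolev}, together with $\|(\mathbf 1_{\rho_0>\epsilon}-1)f_\alpha\|_{H^{-1}}\to0$, which comes from the Hardy estimate of Step 1. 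Interior regularity (\cref{prop:local-regularity}), interpolation and Sobolev embedding then upgrade this to uniform convergence on compact $K$.

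\textbf{Step 3: the boundary remainder.} Green's formula, exactly as in \cref{prop:int-G0-delta-w}, gives $u_\epsilon=\tilde U_{\epsilon,\alpha}+r_{\epsilon,\alpha}$ with
\[
r_{\epsilon,\alpha}(x)=-\int_{\partial M_\epsilon}G_0(x;y)\partial_\nu u_\epsilon(y)\vol_h(y).
\]
By Cauchy--Schwarz and \cref{lem:G-0-epsilon}, $|r_{\epsilon,\alpha}(x)|\le C_x\,\epsilon\,\|\partial_\nu u_\epsilon\|_{L^2(\partial M_\epsilon)}$. It therefore suffices to prove $\|\partial_\nu u_\epsilon\|_{L^2(\partial M_\epsilon)}=\mathcal O(1)$. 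For this we follow \cref{prop:normal_w_eps}: we differentiate Green's formula at $z\in\partial M_\epsilon$ and invert $\tfrac12(I+N^\#_\epsilon)$, which has uniformly bounded inverse by \cref{prop:double-layer-N-sharp}. This reduces the claim to showing that
\[
\sup_{z\in\partial M_\epsilon}\Bigl|\int_{M_\epsilon}\partial_{\nu_z}G_0(z;y)f_\alpha(y)\vol_{\bar g_0}(y)\Bigr|=\mathcal O(1).
\]

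\textbf{Main obstacle: the bound on $\partial_{\nu_z}G_0 f_\alpha$.} We split $G_0=G_{0,\prin}+G_{0,\rem}$ and split the region into $M_{\delta/4}$ and its complement, as in \cref{lem:normal_error_far_away,lem:normal_error_remainder}.
\begin{itemize}
\item On $M_{\delta/4}$ the integrand is bounded, so that piece is $\mathcal O(1)$.
\item The remainder piece is bounded by $\int(C_0\rho_0+C_1(\epsilon+\rho_0)^2+\dots)\rho_0^{-\alpha}$, which is uniformly bounded.
\item For the principal part we use the explicit angular integral \eqref{eq:cas}. The $u$-integral of $|\partial_{r_z}G_{0,\prin,\A}|$ contributes $\lesssim r_y/(1-r_y^2)\cdot(1-r_y^2)$ after removing the volume factor, i.e.\ it is $\mathcal O(1)$ per unit $\rho_0$. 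Against $f_\alpha\sim\rho_0^{-\alpha}$ this leaves $\int_\epsilon^{\delta}\rho_0^{-\alpha}\,d\rho_0$. This is bounded for $\alpha<1$. For $\alpha\ge1$ it is only $\mathcal O(\epsilon^{1-\alpha})$ or $\mathcal O(\log\epsilon)$, and we would then need a sharper, non-absolute estimate.
\end{itemize}
If the absolute bound fails for $\alpha\ge1$, we would first try an explicit model correction $\chi v_{\epsilon,\alpha}$ with $v_{\epsilon,\alpha}=(\epsilon^{2-\alpha}-\rho_0^{2-\alpha})/(2-\alpha)^2$. This solves the model equation exactly up to lower order, is $\mathcal O(1)$, and is absorbed into $\tilde U_{\epsilon,\alpha}$. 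Its residual source is $\mathcal O(\rho_0^{1-\alpha})$, and we iterate if necessary, which yields the improvement in integrability. Once the normal-derivative integral is $\mathcal O(1)$, we get $|r_{\epsilon,\alpha}(x)|\le D_x\epsilon$, and taking the maximum over $K$ gives the uniform statement.
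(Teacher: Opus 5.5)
For $\alpha<1$ your direct argument is correct, and simpler than the paper's. For $\alpha\in[1,2)$, however, Step 3 reduces to an estimate that is false, so no sharper or non-absolute bound can supply it.

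Green's formula with the constant function $1$ gives $\int_{\partial M_\epsilon}\partial_\nu u_\epsilon\,\vol_h=-\int_{M_\epsilon}f_\alpha\,\vol_{\bar g_0}$. Since $f_\alpha\ge c\,\rho_0^{-\alpha}$ near $\partial M$, this flux has modulus $\gtrsim\epsilon^{1-\alpha}$ for $\alpha>1$ and $\gtrsim|\log\epsilon|$ for $\alpha=1$. Cauchy--Schwarz on $\partial M_\epsilon$ then forces $\|\partial_\nu u_\epsilon\|_{L^2(\partial M_\epsilon)}$ to be at least that large. So your remark that no leading term is needed is right for the interior limit but wrong for the boundary flux.

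The missing idea is the paper's subtraction of a model solution $\chi v_{\epsilon,\alpha}$ that carries this flux. After the subtraction, $w_{\epsilon,\alpha}=u_\epsilon-\chi v_{\epsilon,\alpha}$ has two kinds of source. The first is $\mathcal O(\rho_0^{1-\alpha})$; it is integrable against $\partial_{\nu_z}G_0$ and gives $\|\partial_\nu w_{\epsilon,\alpha}\|_{L^2}=\mathcal O(1)$. The second is an $\epsilon$-dependent multiple of $\Delta_{\bar g_0}\chi$, evaluated exactly via $\int G_0\,\Delta_{\bar g_0}\chi\,\vol_{\bar g_0}=\chi-1$.

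Your fallback points in this direction, but the proposed $v$ does not achieve it. For $v=(\epsilon^{2-\alpha}-\rho_0^{2-\alpha})/(2-\alpha)^2$ one has $\partial_{\rho_0}^2v=\frac{\alpha-1}{2-\alpha}\rho_0^{-\alpha}$ instead of $-\rho_0^{-\alpha}$. The residual is therefore still a nonzero multiple of $\rho_0^{-\alpha}$, and iterating does not help. The correct leading term is $\frac{\rho_0^{2-\alpha}-\epsilon^{2-\alpha}}{(\alpha-1)(2-\alpha)}$ for $\alpha\neq1$, and $\epsilon\log\epsilon-\rho_0\log\rho_0$ plus linear terms for $\alpha=1$. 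On a general $M$ it should also carry the boundary value $c^\alpha\beta|_{\partial M}=c(\theta)^{\alpha-2}$ of the coefficient you (correctly) kept in $f_\alpha$.

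There is also a cheaper repair. Your own bounds give $\|\partial_\nu u_\epsilon\|_{L^2}=\mathcal O(\epsilon^{1-\alpha})$ (resp.\ $\mathcal O(|\log\epsilon|)$), hence $\sup_K|r_{\epsilon,\alpha}|\lesssim\epsilon^{2-\alpha}$ (resp.\ $\epsilon|\log\epsilon|$). The statement lets any term tending to zero uniformly on $K$ be moved into $\tilde U_{\epsilon,\alpha}$; the paper does exactly this with its $\epsilon^{2-\alpha}$ term. This already proves the theorem as formulated, but it does not give a genuine $\mathcal O(\epsilon)$ boundary remainder in your decomposition.

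Step 1 also contains an error that breaks Step 2 for larger $\alpha$. The condition $\rho_0^{1-\alpha}\in L^2$ requires $2-2\alpha>-1$, i.e.\ $\alpha<3/2$; likewise $f_\alpha\in L^2$ only for $\alpha<1/2$. For $\alpha\in[3/2,2)$ the pairing $\varphi\mapsto\int f_\alpha\varphi\,\vol_{\bar g_0}$ is not bounded on $H^1_0$: test with $\varphi\approx\rho_0^{1/2}|\log\rho_0|^{-1}$ near $\partial M$, which lies in $H^1_0$ but makes the pairing diverge. Hence $f_\alpha\notin H^{-1}$ and the $H^1_0$-convergence argument fails there.

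You can bypass this. \Cref{lem:G-0-epsilon} gives $\sup_{x\in K}|\tilde U_{\epsilon,\alpha}(x)-\tilde U_\alpha(x)|\le C_K\int_{\{\rho_0<\epsilon\}}\rho_0^{1-\alpha}\vol_{\bar g_0}=\mathcal O(\epsilon^{2-\alpha})$. Smoothness of $\tilde U_\alpha$ on $K$ then follows from interior elliptic regularity. In the paper's route the problem does not arise, because the singular source left after subtracting the model solution is $\mathcal O(\rho_0^{1-\alpha})$, which Hardy's inequality places in $H^{-1}$ for every $\alpha<2$.
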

The proof of \cref{thm:ggg} follows the same steps as the proof of \cref{thm:hyperbolic}.  Notice first that by \cref{lem: constants beta},  we can once again reduce the boundary value problem \eqref{eq:bvp} to 
\begin{equation}
    \Delta_{\bar{g}_0}u_\epsilon(x) = -\frac{1+ \beta^{(1)}(\theta)\rho_0(x) + \beta^{(2)}(\rho_0,\theta)}{\rho_0^\alpha(x)} \ .
\end{equation}
We define the functions \(v_{\epsilon,\alpha }\) by 
\begin{equation}
    v_{\epsilon,\alpha}(x):= \begin{cases}
        \begin{aligned}
            \frac{1}{(\alpha -1) (2-\alpha )}\bigg[\rho_0 ^{2-\alpha } \, _2F_1(1,2-\alpha ;3-\alpha ;2 \rho_0 )\qquad \qquad \qquad \\
            -{\epsilon }^{2-\alpha } \, _2F_1(1,2-\alpha ;3-\alpha ;2 \epsilon )\bigg]\qquad\quad  \\
            +\frac{2^{\alpha -2} (\log (1-2 \rho_0 )-\log (1-2 \epsilon ))}{\alpha -1}
        \end{aligned}&\text{if \(\alpha\neq 1 \)}\\
        \frac{1}{2} \left(\text{Li}_2(1-2 \epsilon )-\text{Li}_2\left(1-2\rho_0\right)\right)\quad &\text{if \(\alpha = 1 \)}
    \end{cases}
    \label{eq:v-epsilon-alpha-def}
\end{equation}
where \(\, _2F_1\) is the Hypergeometric function \cite[Chapter 15]{Abramowitz1964Handbook},  
and \(\text{Li}_2\) is the Dilogarithm function \cite[Section 27.7]{Abramowitz1964Handbook}. 
On the model gas giant surface \((\DD\setminus \DD_{1- \bar\delta},g_{\Euc}/\varrho^\alpha )\), the functions \(v_{\epsilon,\alpha}(x) \) solve the boundary value problem \eqref{eq:bvp} exactly.

These functions satisfy the expansions around \(\epsilon,\rho_0 = 0 \)
\begin{equation}
    \label{eq:v-alpha-epsilon-asymptotics}
    v_{\epsilon,\alpha}(x)\sim 
    \begin{cases}
        \frac{1}{\alpha-1}\bigg(\frac{1}{2-\alpha}\left( \rho_0^{2-\alpha}-\epsilon^{2-\alpha} \right)+{2^{\alpha -1} \epsilon }-{2^{\alpha -1} \rho_0}\bigg) + \mathcal{O}(\rho_0^2) + \mathcal{O}(\epsilon^2)\quad &\text{if \(\alpha\neq 1\)}\\
        \rho_0 (-\log \rho_0+1-\log 2)+\epsilon  (\log \epsilon -1+\log 2) + \mathcal{O}(\rho_0^2\log \rho_0 )+ \mathcal{O}(\epsilon^2 \log \epsilon) &\text{if \(\alpha = 1 \)} \ .
    \end{cases}
\end{equation}
Now if \(\chi(x) \in C^\infty(\overline M )\)  a the cut-off function depending only on \(\rho_0(x) \), with the properties that \(\chi|_{\rho_0(x) < \delta/2 }\equiv 1 \) and \(\supp \chi \subset \overline M \setminus M_\delta\), 
then \(\chi v_{\epsilon,\alpha} \in C^\infty(M_\epsilon)\) and  we have the following  lemma, similar to \cref{lem:w_eps_Laplacian}.
\begin{lemma}\label{lem:w-eps-alpha-laplacian}
    Let \(w_{\epsilon,\alpha}(x) = u_{\epsilon,\alpha}(x)-\chi(x)\cdot v_{\epsilon,\alpha}(x)\). Then there are smooth functions \(f_{1,\alpha},f_{2,\alpha}\in C^\infty(M_\epsilon) \) such that 
    \begin{equation}
        \Delta_{\bar{g}_0}w_{\epsilon,\alpha}(x) = \begin{cases}
            \frac{1}{(\alpha-1)(2-\alpha)}f_{1,\alpha}(x) \epsilon^{2-\alpha} +f_{2,\alpha}(x) \quad &\text{if \(\alpha> 1 \)}\\
            -f_{1,1}(x)\epsilon(\log \epsilon-1+\log 2) +f_{2,1}(x) \quad &\text{if \(\alpha = 1 \)}\\
            \frac{-2^{\alpha-1}}{\alpha-1}f_{1,\alpha} (x)\epsilon + f_{2,\alpha} (x)&\text{if \(\alpha< 1 \)}
        \end{cases}
    \end{equation}
    Furthermore, \(f_{1,\alpha} \)  extends to a smooth function which is 0 on \(\overline{M}\setminus M_\epsilon\). The integral
    \begin{align}
        I_{1,\alpha}(x)=\int_{M_\epsilon} G_0(x;y) f_{1,\alpha}(y) \vol_{\bar{g}_0}(y) &= \chi(x) -1\ ,
        \intertext{and for each \(x\in M^\circ\), the integral satisfies the estimate}
        I_{2,\alpha}(x)=\int_{M_\epsilon}G_0(x;y) f_{2,\alpha}(y) \vol_{\bar{g}_0}(y) &=  \mathcal{O}(1)\ ,
    \end{align}
    as \(\epsilon\to 0 \).

    Finally, if \(K\Subset M^\circ \) is a compact set, then the function \(\tilde u_{\epsilon,\alpha}:= I_{2,\alpha}(x) \) converges uniformly on \(K \) to some bounded smooth function \(\tilde u_\alpha(x) \in C^\infty(K)\) given by 
    \begin{equation*}
        \tilde u_{\alpha}(x) : = \int_{\overline M }G_0(x;y) f_2(y) \vol_{\bar g_0}(y) \ .
    \end{equation*} 
\end{lemma}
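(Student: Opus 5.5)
I will mirror the proof of \cref{lem:w_eps_Laplacian} step by step, the only new input being the explicit form of \(v_{\epsilon,\alpha}\) in \eqref{eq:v-epsilon-alpha-def}. First I compute \(\Delta_{\bar g_0}w_{\epsilon,\alpha}\). Since \(v_{\epsilon,\alpha}\) solves the model problem exactly in the collar, we have \(\Delta_{\bar g_0}v_{\epsilon,\alpha} = -\rho_0^{-\alpha}\) there. Hence
\[
\Delta_{\bar g_0}w_{\epsilon,\alpha} = -\frac{(1-\chi)+\beta^{(1)}\rho_0+\beta^{(2)}}{\rho_0^\alpha} - (\Delta_{\bar g_0}\chi)\, v_{\epsilon,\alpha} - 2\inner{d\chi}{dv_{\epsilon,\alpha}}_{\bar g_0}.
\]
The gradient \(dv_{\epsilon,\alpha}\) does not depend on \(\epsilon\), because \(\epsilon\) only enters through an additive constant. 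Next I split \(v_{\epsilon,\alpha}(x)=V_\alpha(\rho_0(x)) - V_\alpha(\epsilon)\), where \(V_\alpha\) is the \(\rho_0\)-part of \eqref{eq:v-epsilon-alpha-def}. Using \eqref{eq:v-alpha-epsilon-asymptotics}, the constant \(-V_\alpha(\epsilon)\) has leading term \(\epsilon^{2-\alpha}/((\alpha-1)(2-\alpha))\) up to sign and lower-order corrections when \(\alpha>1\), \(-2^{\alpha-1}\epsilon/(\alpha-1)\) when \(\alpha<1\), and \(\epsilon(\log\epsilon-1+\log2)\) when \(\alpha=1\). I set \(f_{1,\alpha}=\Delta_{\bar g_0}\chi\), up to the normalising sign. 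All remaining terms, namely the \(\epsilon\)-independent ones together with \(\Delta\chi\) times the higher-order remainder of \(V_\alpha(\epsilon)\), go into \(f_{2,\alpha}\). The \(\epsilon\)-dependent part of \(f_{2,\alpha}\) is supported in \(\{\delta/2\le\rho_0\le\delta\}\) and is uniformly bounded. That remainder could alternatively be absorbed by allowing \(f_{1,\alpha}\) to be multiplied by the full constant \(V_\alpha(\epsilon)\); I would do this if the exact coefficients stated in the lemma are meant only as leading order.

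The identity \(I_{1,\alpha}=\chi-1\) is word for word the computation in part 2 of \cref{lem:w_eps_Laplacian}. Since \(f_{1,\alpha}=\Delta\chi\) vanishes near \(\partial M_\epsilon\), we may apply Green's formula, together with \(G_0(x;\cdot)|_{\partial M}=0\) and \(\int_{\partial M_\epsilon}\partial_\nu G_0=1\) (i.e.\ \(\chi\equiv1\) near the boundary).

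For \(I_{2,\alpha}=\mathcal O(1)\) I will use dominated convergence, exactly as in part 3. By \cref{lem:G-0-epsilon} we have \(|G_0(x;y)|\le C_x\rho_0(y)\) near \(\partial M\). The singular part of \(f_{2,\alpha}\) is \(\lesssim \rho_0^{-\alpha}\), so the integrand is \(\lesssim \rho_0^{1-\alpha}\), which is integrable since \(\alpha<2\). Every other term is bounded and compactly supported away from \(\partial M\). This gives convergence of \(I_{2,\alpha}\) to \(\tilde u_\alpha(x)=\int_{\overline M}G_0 f_{2,\alpha}\). The only subtlety is the \(\epsilon\)-dependent bounded piece, which tends to \(0\) uniformly.

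For the uniform convergence on compact \(K\), I repeat part 4. I view \(f_{2,\alpha}\) as an element of \(H^{-1}(M,\bar g_0)\) using Hardy's inequality \cref{prop:hardy-inequality}. This works because \(\rho_0^{1-\alpha}\in L^2\), i.e.\ \(|f_{2,\alpha}|\lesssim \rho_0^{-1}\cdot\rho_0^{1-\alpha}\), with \(\rho_0^{1-\alpha}\in L^2\) since \(2-2\alpha>-1\). The difference \((\mathbf 1_{\rho_0>\epsilon}-1)f_{2,\alpha}\) then tends to \(0\) in \(H^{-1}\) via Cauchy--Schwarz against \(\|(\mathbf 1_{\rho_0>\epsilon}-1)\rho_0^{1-\alpha}\|_{L^2}\to0\). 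Then \cref{prop:elliptic_regularity_p-type_sobolev} gives \(H^1\) convergence, and interior regularity \cref{prop:local-regularity}, interpolation and Sobolev embedding \cref{lem:sobolev embedding} upgrade this to \(C^s(K)\) convergence.

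The main obstacle is this \(H^{-1}\) step for \(\alpha\in[3/2,2)\). There \(\rho_0^{1-\alpha}\notin L^2\) in general, since \(2-2\alpha\le -1\), so the plain Hardy argument fails. I would first try a weighted Hardy inequality, \(\int |\varphi|^2\rho_0^{-2}\le c\int|D\varphi|^2\), combined with pairing against \(\rho_0^{1-\alpha}\mathbf 1_{\rho_0<\epsilon}\) in a weak \(L^1\)-type sense. If that fails, I would bypass \(H^{-1}\) entirely: on \(K\) with \(\mathrm{dist}(K,\partial M)>0\), the difference \(\tilde u_\alpha-\tilde u_{\epsilon,\alpha}=\int_{\rho_0<\epsilon}G_0(x;y)f_{2,\alpha}(y)\) can be bounded directly in \(C^s(K)\). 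Derivatives in \(x\) of \(G_0(x;y)\) are still \(\mathcal O(\rho_0(y))\) uniformly for \(x\in K\), by joint smoothness off the diagonal and the Taylor argument of \cref{lem:G-0-epsilon}. This gives the bound \(\lesssim\int_0^\epsilon \rho^{1-\alpha}\,d\rho=\mathcal O(\epsilon^{2-\alpha})\to0\), and I expect this direct route is the cleaner one.
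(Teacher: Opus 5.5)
Your proposal is correct and has the same skeleton as the paper, whose proof simply declares the argument identical to that of \cref{lem:w_eps_Laplacian}; the only new input there is convergence of \(\int_{M_\epsilon}G_0(x;y)\rho_0^{1-\alpha}(y)\vol_{\bar g_0}(y)\). You are more careful than the paper on one point. \(V_\alpha(\epsilon)\) is not a single monomial, so the stated coefficients are only leading order unless the correction \(\Delta_{\bar g_0}\chi\,(V_\alpha(\epsilon)-\text{leading term})\) is moved into \(f_{2,\alpha}\), where it contributes a vanishing constant times \(\chi-1\), or into \(f_{1,\alpha}\).

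The obstacle you anticipate for \(\alpha\in[3/2,2)\) does not arise in the lemma's setting. It rests on the bound \(|f_{2,\alpha}|\lesssim\rho_0^{-\alpha}\), which is not sharp. Since \(\chi\equiv1\) near \(\partial M\), the \(\rho_0^{-\alpha}\) term is cancelled exactly by \(\Delta_{\bar g_0}(\chi v_{\epsilon,\alpha})\). Your own formula then gives \(f_{2,\alpha}=-\beta^{(1)}\rho_0^{1-\alpha}-\beta^{(2)}\rho_0^{-\alpha}=\mathcal O(\rho_0^{1-\alpha})\) near \(\partial M\), because \(\beta^{(2)}=\mathcal O(\rho_0^2)\). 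Hence \(\rho_0 f_{2,\alpha}=\mathcal O(\rho_0^{2-\alpha})\) is bounded, and the paper's Hardy argument works verbatim for every \(\alpha\in(0,2)\), with \(\|(\mathbf 1_{\rho_0>\epsilon}-1)f_{2,\alpha}\|_{H^{-1}}\lesssim\|\mathbf 1_{\rho_0<\epsilon}\rho_0^{2-\alpha}\|_{L^2}\to0\).

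Your fallback is nonetheless a valid and genuinely different proof of the final claim. You estimate \(\tilde u_\alpha-\tilde u_{\epsilon,\alpha}=\int_{\rho_0<\epsilon}G_0(x;y)f_{2,\alpha}(y)\vol_{\bar g_0}(y)\) directly in \(C^s(K)\), using \(|\partial_x^kG_0(x;y)|\le C_{K,k}\rho_0(y)\) for \(x\in K\). This bypasses the paper's chain of \(H^1\) convergence, interpolation and Sobolev embedding. It also gives an explicit rate, \(\mathcal O(\epsilon^{3-\alpha})\) with the sharp bound, and does not depend on how strong the boundary singularity of \(f_{2,\alpha}\) is. The paper's route, in exchange, additionally yields convergence in \(H^1_0(M)\).

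That robustness is worth keeping. Normalising the leading coefficient of the reduced equation to \(1\) is not automatic for \(\alpha<2\). By \cref{lem: constants beta} the factor \(\rho^{-2}\) contributes leading coefficient \(1\), but the remaining factor \(\rho^{2-\alpha}=(\rho_0/c+\mathcal O(\rho_0^2))^{2-\alpha}\) contributes \(c(\theta)^{\alpha-2}\). If that coefficient is not identically \(1\), then with \(v_{\epsilon,\alpha}\) as defined \(f_{2,\alpha}\) really does contain a \(\rho_0^{-\alpha}\) term. In that case the Hardy route breaks down for \(\alpha\ge 3/2\), while your direct estimate still goes through.
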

\begin{proof}
    The proof is identical to the proof of \cref{lem:w_eps_Laplacian}, by replacing \(v_\epsilon \) from Equation \eqref{eq:v-epsilon-def} by \(v_{\epsilon,\alpha }\) from Equation \eqref{eq:v-epsilon-alpha-def}, 
    hence the integral
    \begin{equation}
        \int_{M_\epsilon}G_0(x;y) \rho_0^{1-\alpha}(y) \vol_{\bar g_0} (y )
    \end{equation}
    converges to the integral
    \begin{equation}
        \int_{\overline{M}} G_0(x;y) \rho_0^{1-\alpha}(y) \vol_{\bar g_0}( y)  
    \end{equation}
    as \(\epsilon\to 0 \).
\end{proof}
\Cref{lem:normal_integral} can also be modified for the gas giant geometries. 
\begin{lemma}\label{lem: g i alpha}
    Let \(f_{1,\alpha },f_{2,\alpha}\in C^\infty(\overline{M })\) be the functions from \cref{lem:w-eps-alpha-laplacian}. Then,  there are functions \(g_{i,\alpha}:\partial M_\epsilon \to \RR\) such that for \(z\in \partial M_\epsilon , \alpha \in (0,2)\), the following estimates hold. 
    If \(\alpha> 1 \), then 
    \begin{align}
            \int_{M_\epsilon}\partial_{\nu_z}G_0(z;y)(f_{1,\alpha}(y)\epsilon^{2-\alpha}+f_{2,\alpha}(y))\vol_{\bar{g}_0}(y) = g_{1,\alpha}(z) \epsilon^{2-\alpha} + g_{2,\alpha} (z)
    \end{align}
    if \(\alpha = 1\), then
    \begin{align}
            \int_{M_\epsilon}\partial_{\nu_x}G_0(z;y)(-f_{1,1}(y)\epsilon(\log \epsilon-1+\log 2 )+f_{2,1}(y))\vol_{\bar{g}_0}(y) = g_{1,1}(z) \epsilon + g_{2,1}(z)
    \end{align}
    and if \(\alpha< 1 \), then
    \begin{equation}
            \int_{M_\epsilon}\partial_{\nu_z} G_0(z;y)(f_{1,\alpha}(y)\epsilon + f_{2,\alpha})\vol_{\bar g_0}(y) = g_{1,\alpha}(z) \epsilon^{2-\alpha} + g_{2,\alpha}(z)
    \end{equation}
    with 
    \(\sup_{z\in \partial M_\epsilon}|g_{i,\alpha}(z)| = \mathcal{O}(1) \), as \(\epsilon\to 0 \).
\end{lemma}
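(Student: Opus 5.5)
We follow the proof of \cref{lem:normal_integral} term by term, replacing \(v_\epsilon\) by \(v_{\epsilon,\alpha}\). First we write out \(f_{1,\alpha}\) and \(f_{2,\alpha}\) explicitly from the computation of \(\Delta_{\bar g_0} w_{\epsilon,\alpha}\), as in \eqref{eq: Delta of w epsilon}. Here \(f_{1,\alpha}=\Delta_{\bar g_0}\chi\) is supported in \(M_{\delta/2}\setminus M_\delta\), and it is multiplied by the \(\epsilon\)-dependent constant part of \(v_{\epsilon,\alpha}\) taken from \eqref{eq:v-alpha-epsilon-asymptotics}: \(\epsilon^{2-\alpha}\), \(\epsilon\log\epsilon\), or \(\epsilon\), together with \(\mathcal O(\epsilon^2)\) corrections that we absorb into \(g_{2,\alpha}\). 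The function \(f_{2,\alpha}\) consists of two parts:
\[
-\frac{(1-\chi)+\beta^{(1)}\rho_0+\beta^{(2)}}{\rho_0^{\alpha}},
\]
plus the commutator terms \(-\Delta_{\bar g_0}\chi\cdot(\text{\(\rho_0\)-part of }v_{\epsilon,\alpha})-2\langle d\chi,dv_{\epsilon,\alpha}\rangle_{\bar g_0}\). The commutator terms are smooth, supported in \(M_{\delta/2}\), and uniformly bounded in \(\epsilon\).

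For the \(f_{1,\alpha}\) term we use the argument of \cref{lem:normal_integral_no_singularities}. The integral \(\int_{M_\epsilon}\partial_{\nu_z}G_0(z;y)f_{1,\alpha}(y)\vol_{\bar g_0}(y)\) equals \(\partial_{\nu_z}(\chi(z)-1)\) by \cref{lem:w-eps-alpha-laplacian}, and this vanishes on \(\partial M_\epsilon\) for \(\epsilon<\delta/2\). We may therefore take \(g_{1,\alpha}\equiv 0\), or keep it as an \(\mathcal O(1)\) bookkeeping term so that the stated form holds in all three cases. The commutator part of \(f_{2,\alpha}\) is treated exactly like \(\tilde g_2\): we split \(G_0=G_{0,\prin}+G_{0,\rem}\). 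The principal part contributes nothing, since \(\supp G_{0,\prin}(z;\cdot)\subset \overline M\setminus M_{\delta/3}\) for \(z\in\partial M_\epsilon\). The remainder part is bounded using \cref{prop:greens decomposition}, part \ref{prop_part:greens decomp 2}, on \(M_\epsilon\setminus M_{\delta/4}\) and by smoothness on \(M_{\delta/4}\). Likewise, \(\beta^{(2)}/\rho_0^\alpha=\mathcal O(\rho_0^{2-\alpha})\) is bounded, so it is handled as in \cref{lem:normal_beta2}.

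The main step is the singular term \(\beta^{(1)}(\theta)\rho_0^{1-\alpha}\). This is where the gas giant case improves on \cref{lem:normal_error_principal}. We split it into a far part on \(M_{\delta/4}\), which is \(\mathcal O(1)\) as in \cref{lem:normal_error_far_away}, a remainder part, and a principal part. The remainder part is bounded via \cref{prop:greens decomposition} by \(\int(C_0\rho_0+C_1(\epsilon+\rho_0)^2+\dots)\rho_0^{1-\alpha}\), which is uniformly bounded because \(1-\alpha>-1\). For the principal part we pass to the annulus and take absolute values inside the angular integral exactly as in the proof of \cref{lem:normal_error_principal}. Using the computer-algebra identity \eqref{eq:cas}, the angular integral of \(|\partial_{r_z}G_{0,\prin,\A}|\) is at most a multiple of \(r_y/(1-r_y^2)\), that is, of order \(1/\rho_0(y)\). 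We are then left with the radial integral
\[
\int_{\epsilon}^{\delta/4}\rho_0^{-1}\rho_0^{1-\alpha}\,\dd\rho_0 .
\]
This is \(\mathcal O(\epsilon^{1-\alpha})\) for \(\alpha>1\), \(\mathcal O(\log\epsilon)\) for \(\alpha=1\), and \(\mathcal O(1)\) for \(\alpha<1\).

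This crude bound is the obstacle. For \(\alpha\ge 1\) it does not give \(\mathcal O(1)\), so taking absolute values is too lossy. First we would avoid the absolute value in the angle. \(\partial_{\nu_z}G_{0,\prin,\A}(z;\cdot)\) is a Poisson-type kernel concentrated at scale \(\rho_0(y)+\epsilon\) around \(\theta(z)\), with total mass \(\mathcal O(1)\) in \(\vartheta_y\) only when integrated with sign. We write \(\beta^{(1)}(\vartheta_y)=\beta^{(1)}(\vartheta_z)+\mathcal O(|\vartheta_y-\vartheta_z|)\). The constant piece is computed exactly through the zeroth Fourier mode, as in \cref{prop:double-layer-N-sharp}: the signed angular integral is explicit and vanishes or is \(\mathcal O(1)\), since the kernel is \(\partial_r\) of a function that is radial after angular averaging. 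The Lipschitz piece gains a factor \(\rho_0(y)\), which makes the radial integral \(\int\rho_0^{1-\alpha}\) finite. If this refinement fails, we would instead take \(g_{1,\alpha}:=h_{2,\alpha}/\epsilon^{2-\alpha}\)-type normalisations, mirroring the hyperbolic proof where \(g_1=h_2/\log\epsilon\). Combining all pieces then yields the three stated decompositions with \(\sup_{\partial M_\epsilon}|g_{i,\alpha}|=\mathcal O(1)\).
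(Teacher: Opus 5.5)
Your decomposition is the paper's: the $f_{1,\alpha}$ term, the commutator terms, the $\beta^{(2)}$ term, the far part, and the $G_{0,\rem}$ part bounded by $\int(\epsilon+\rho_0)^k\rho_0^{1-\alpha}$. The paper's proof is exactly ``as in \cref{lem:normal_integral}, except that the $\beta^{(1)}\rho_0^{1-\alpha}$ integral is now $\mathcal O(1)$''. The gap is in the key step, the principal $\beta^{(1)}$ term, where you misread \eqref{eq:cas}.

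The integrand there is not $|\partial_{r_z}G_{0,\prin,\A}|$. It is $\partial_{r_z}G_{0,\prin,\A}$ multiplied by $2r_y/(r_y^2-1)$, i.e.\ by the Jacobian $r_y$ and the hyperbolic weight $1/\rho_0(y)=2/(1-r_y^2)$; the factor $r_y^2-1$ in \eqref{eq:g-0-prin-normalderiv} has been cancelled. So the $r_y/(1-r_y^2)$ on the right of \eqref{eq:cas} \emph{is} the weight, and when you multiply it again by $\rho_0^{1-\alpha}$ you count $1/\rho_0$ twice.

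The kernel itself has bounded angular mass. For $r_y<r_z=\sqrt{1-2\epsilon}$,
\[
\partial_{r_z}G_{0,\prin,\A}=\frac{(1-r_y^2)\,[\,r_z(1+r_y^2)-r_y(1+r_z^2)\cos u\,]}{2\pi\,(r_z^2-2r_zr_y\cos u+r_y^2)(r_z^2r_y^2-2r_zr_y\cos u+1)},
\]
and the bracket is at least $(r_z-r_y)(1-r_zr_y)>0$, so the kernel is positive. Since $\int_{-\pi}^{\pi}G_{0,\prin,\A}\,\dd\vartheta_y=\log\max(r_z,r_y)$ (Jensen), its angular integral equals $1/\sqrt{1-2\epsilon}$. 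Your own heuristic agrees: a Poisson-type kernel of width $\rho_0(y)+\epsilon$ has height of order $(\rho_0(y)+\epsilon)^{-1}$, hence $\mathcal O(1)$ mass. The crude absolute-value bound therefore already gives
\[
|h_{2,\alpha}(z)|\leq \frac{\sup|\beta^{(1)}|}{\sqrt{1-2\epsilon}}\int_\epsilon^{\delta/4}\rho_0^{1-\alpha}\,\dd\rho_0=\mathcal O(1)
\]
uniformly in $z\in\partial M_\epsilon$, for every $\alpha\in(0,2)$. This is the one change the paper makes. The hyperbolic $\log\epsilon$ came only from $\int_\epsilon\rho_0^{-1}\,\dd\rho_0$, and $1-\alpha>-1$ removes it.

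As written, your proposal does not prove the statement for $\alpha\geq 1$. It asserts that the crude estimate gives $\mathcal O(\epsilon^{1-\alpha})$ and $\mathcal O(\log\epsilon)$, which is false. The refinement you offer rests on a false premise: the kernel does not have $\mathcal O(1)$ mass ``only when integrated with sign'', because it does not change sign on $M_\epsilon$, so signed and absolute integrals coincide. Its constant-coefficient piece is just the computation above and its Lipschitz piece is trivially bounded, so the refinement would reach the right conclusion, but it is an unnecessary detour justified by a wrong reason.

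The fallback fails outright. With your claimed $h_{2,\alpha}=\mathcal O(\epsilon^{1-\alpha})$, setting $g_{1,\alpha}=h_{2,\alpha}/\epsilon^{2-\alpha}$ gives $g_{1,\alpha}=\mathcal O(\epsilon^{-1})$. For $\alpha=1$, dividing $\mathcal O(\log\epsilon)$ by $\epsilon$ is likewise unbounded. The hyperbolic normalisation $g_1=h_2/\log\epsilon$ worked only because the prefactor $\log\epsilon$ matched the size of $h_2$. Replace your ``obstacle'' paragraph with the bookkeeping above and the rest of your argument goes through.
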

\begin{proof}
    The proof is identical to the proof of \cref{lem:normal_integral} with the change that the integral 
    \[
        \sup_{z\in \partial M_\epsilon}\left|\int_{M_\epsilon}\partial_{\nu_z}G_0(z;y) \beta^{(1)}(\theta(y)) \rho_0^{1-\alpha}(y)\vol_{\bar{g}_0}(y)\right| = \mathcal{O}(1)
    \]
    as \(\epsilon \to 0 \), 
\end{proof}
A consequence of these lemmas is the following proposition, similar to \cref{prop:normal_w_eps}.
\begin{proposition}\label{prop:w-eps-alpha-normal-deriv}
    The following estimate holds 
    \begin{equation}
        \|\partial_{\nu} w_{\epsilon,\alpha}(z)\|_{L^2(\partial M_\epsilon)} = \mathcal{O}(1)
        \quad \text{as \(\epsilon\to 0 \). }
    \end{equation}
\end{proposition}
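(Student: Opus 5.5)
The plan mirrors the proof of \cref{prop:normal_w_eps}, with \cref{lem: g i alpha} taking the place of \cref{lem:normal_integral}. We start from Green's formula for \(w_{\epsilon,\alpha}\) on \(M_\epsilon\) with the Dirichlet Green's function \(G_0\) of \(\bar g_0\):
\begin{equation*}
    w_{\epsilon,\alpha}(x) + \int_{\partial M_\epsilon} G_0(x;y)\,\partial_{\nu_y} w_{\epsilon,\alpha}(y)\vol_h(y) = \int_{M_\epsilon} G_0(x;y)\,\Delta_{\bar g_0} w_{\epsilon,\alpha}(y)\vol_{\bar g_0}(y) .
\end{equation*}
This is valid because \(w_{\epsilon,\alpha}\) vanishes on \(\partial M_\epsilon\): both \(u_\epsilon\) and \(v_{\epsilon,\alpha}\) vanish there, and \(\chi\equiv 1\) near \(\partial M_\epsilon\). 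We let \(x\to z\in\partial M_\epsilon\) and apply \(\partial_\nu\). The jump relations for the double-layer potential \cite[Chapter 7, Proposition 11.3]{taylor2011pde2} then give the boundary integral equation
\begin{equation*}
    \tfrac12\bigl(I+N^\#_\epsilon\bigr)\partial_\nu w_{\epsilon,\alpha}(z) = \partial_{\nu_z}\int_{M_\epsilon} G_0(z;y)\,\Delta_{\bar g_0} w_{\epsilon,\alpha}(y)\vol_{\bar g_0}(y).
\end{equation*}

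Next we control the right-hand side. By \cref{lem:w-eps-alpha-laplacian}, \(\Delta_{\bar g_0}w_{\epsilon,\alpha}\) equals \(f_{2,\alpha}\) plus a multiple of \(f_{1,\alpha}\) with coefficient \(\epsilon^{2-\alpha}\), \(\epsilon\log\epsilon\), or \(\epsilon\), according as \(\alpha>1\), \(\alpha=1\), or \(\alpha<1\). In every case the coefficient tends to \(0\), since \(\alpha<2\). The integrand is now at worst of size \(\rho_0^{1-\alpha}\) near the boundary. This is either integrable, or its singularity is weaker than the \(1/\rho_0\) singularity that produced the logarithm in the hyperbolic case. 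We may therefore move the normal derivative inside the integral and apply \cref{lem: g i alpha}. The right-hand side becomes \(g_{1,\alpha}(z)\,o(1)+g_{2,\alpha}(z)\), with \(\sup_{\partial M_\epsilon}|g_{i,\alpha}|=\mathcal O(1)\). Since \(|\partial M_\epsilon|_h\) is bounded, its \(L^2(\partial M_\epsilon)\) norm is \(\mathcal O(1)\).

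Finally we invert the operator. The operator \(N^\#_\epsilon\) depends only on \(\bar g_0\) and \(\partial M_\epsilon\), not on \(\alpha\). Hence \cref{prop:double-layer-N-sharp} applies unchanged: the principal part has Fourier multiplier \((1-2\epsilon)^{|n|}\in(0,1]\), so \(A_\epsilon=\tfrac12(I+N^\#_{\epsilon,\prin})\) satisfies \(\|A_\epsilon^{-1}\|_{L^2\to L^2}\le 2\). The remainder satisfies \(\|N^\#_{\epsilon,\rem}\|=\mathcal O(\epsilon)\). A Neumann series therefore gives \(\|(\tfrac12(I+N^\#_\epsilon))^{-1}\|_{L^2\to L^2}\le 3\) for small \(\epsilon\), exactly as in the proof of \cref{prop:normal_w_eps}. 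Applying this inverse to the boundary integral equation yields \(\|\partial_\nu w_{\epsilon,\alpha}\|_{L^2(\partial M_\epsilon)}\le 3\,\mathcal O(1)=\mathcal O(1)\).

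The main obstacle is the input from \cref{lem: g i alpha}: the uniform boundedness of \(\int_{M_\epsilon}\partial_{\nu_z}G_0(z;y)\,\beta^{(1)}\rho_0^{1-\alpha}\vol_{\bar g_0}(y)\). If we want to verify it rather than quote it, we would split the integral as in \cref{lem:normal_error_far_away,lem:normal_error_principal,lem:normal_error_remainder}. In the principal part, the angular integral of \(|\partial_{r_z}G_{0,\prin,\A}|\) is explicitly \(\mathcal O(1)\) uniformly in \(r_y\), by \eqref{eq:cas} multiplied by \((1-r_y^2)/(2r_y)\). What is left is \(\int_\epsilon \rho_0^{1-\alpha}\,d\rho_0\), which is bounded because \(1-\alpha>-1\). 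The remainder part is handled by the estimate in \cref{prop:greens decomposition}.
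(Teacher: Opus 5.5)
Your proposal is correct and follows the paper's proof essentially step for step. Green's formula and the single-layer jump relation give the boundary integral equation. \cref{lem: g i alpha} bounds its right-hand side, since the coefficient of \(f_{1,\alpha}\) tends to \(0\). The Neumann-series bound \(\|(\tfrac12(I+N^\#_\epsilon))^{-1}\|_{L^2\to L^2}\le 3\) from \cref{prop:double-layer-N-sharp} then finishes. Your check of the key input to \cref{lem: g i alpha} is a useful addition, since the paper only asserts it: the angular integral of \(|\partial_{r_z}G_{0,\prin,\A}|\) is \(\mathcal O(1)\), which leaves \(\int_\epsilon \rho_0^{1-\alpha}\,\dd\rho_0\), bounded for \(\alpha<2\).
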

\begin{proof}
    The proof is once again identical to the proof of \cref{prop:normal_w_eps} with the change that since the limit \( \lim_{\epsilon \to 0 } \epsilon^{2-\alpha} = 0\), we have 
    \begin{equation}
        \begin{split}
            \|\partial_{\nu}w_{\epsilon,\alpha}(z)\|_{L^2(\partial M_\epsilon)}
            &\leq  \left\|\left( \frac{1}{2} (I+N_\epsilon^\#)\right)^{-1}\right\|_{L^2\to L^2}\cdot 
            \|g_{1,\alpha}(z)\epsilon^{2-\alpha} + g_{2,\alpha}(z)\|_{L^2(\partial M_\epsilon)}
            = \mathcal{O}(1)
        \end{split}
    \end{equation}
    by \cref{prop:double-layer-N-sharp,lem: g i alpha}.
\end{proof}
To finish the proof of \cref{thm:ggg} we modify \cref{prop:int-G0-delta-w}.
\begin{proposition}\label{prop:int-G0-delta-w-eps-alpha}
    Let \(\tilde u_{\epsilon,\alpha}(x) \) be the function defined in \cref{lem:w-eps-alpha-laplacian}. For \(x\in M^\circ \), there is a function \(r_{\epsilon,\alpha}(x) \) such that 
    \begin{equation}
        w_{\epsilon,\alpha}(x) = \begin{cases}
            \tilde u_{\epsilon,\alpha}(x)+\frac{1}{(\alpha-1)(2-\alpha)}(\chi(x)-1)  \epsilon^{2-\alpha} + r_{\epsilon,\alpha}(x)\quad &\text{if \(\alpha> 1 \)}\\
            \tilde u_{\epsilon,1}(x)+(\chi(x)-1) \epsilon(\log\epsilon-1+\log 2) + r_{\epsilon,\alpha}(x)& \text{if \(\alpha = 1\)}\\
            \tilde u_{\epsilon,\alpha}(x) -\frac{2^{\alpha-1}}{\alpha-1} (\chi(x)-1) \epsilon  + r_{\epsilon,\alpha}(x)&\text{if \(\alpha< 1 \)}
        \end{cases}
    \end{equation}
    with a positive constant \(D_{x,\alpha} \) such that \(|r_{\epsilon,\alpha}(x)|\leq D_{x,\alpha} \epsilon\) as \(\epsilon\to 0\). If \(K\Subset M^\circ \) is compact, then there is a positive constant \(D_{K,\alpha}\) such that \(\sup_{x\in K }|r_{\epsilon,\alpha}(x)|\leq D_{K,\alpha}\epsilon\).
    as \(\epsilon\to 0 \).
\end{proposition}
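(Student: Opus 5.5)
We follow the proof of \cref{prop:int-G0-delta-w} essentially line by line, replacing the hyperbolic data by the gas giant data from \cref{lem:w-eps-alpha-laplacian,lem: g i alpha,prop:w-eps-alpha-normal-deriv}. The starting point is again Green's formula on \(M_\epsilon\) with the Dirichlet Green's function \(G_0\) of \(\bar g_0\):
\begin{equation*}
    w_{\epsilon,\alpha}(x) = \int_{M_\epsilon}G_0(x;y)\,\Delta_{\bar g_0,y}w_{\epsilon,\alpha}(y)\vol_{\bar g_0}(y) - \int_{\partial M_\epsilon}G_0(x;y)\,\partial_\nu w_{\epsilon,\alpha}(y)\vol_h(y) \ .
\end{equation*}
We then define \(r_{\epsilon,\alpha}(x)\) to be minus the boundary integral, exactly as in \cref{eq:r-eps-def}.

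For the volume integral we insert the case-by-case expression for \(\Delta_{\bar g_0}w_{\epsilon,\alpha}\) from \cref{lem:w-eps-alpha-laplacian}. The \(\epsilon\)-dependent coefficient (\(\epsilon^{2-\alpha}/((\alpha-1)(2-\alpha))\), \(-\epsilon(\log\epsilon-1+\log 2)\), or \(-2^{\alpha-1}\epsilon/(\alpha-1)\)) is a constant in \(y\), so it factors out of the integral. What remains is \(I_{1,\alpha}(x)=\chi(x)-1\). This gives the middle term in each of the three cases; the sign in the case \(\alpha=1\) should be checked against the convention in the lemma. The \(f_{2,\alpha}\) part is precisely \(I_{2,\alpha}(x)=\tilde u_{\epsilon,\alpha}(x)\) by definition. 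Hence no further work is needed there beyond quoting \cref{lem:w-eps-alpha-laplacian}: it gives boundedness for each fixed \(x\) and uniform convergence on compact sets \(K\).

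For the remainder we apply Cauchy--Schwarz on \(\partial M_\epsilon\):
\begin{equation*}
    |r_{\epsilon,\alpha}(x)| \leq \|G_0(x;\cdot)\|_{L^2(\partial M_\epsilon)}\,\|\partial_\nu w_{\epsilon,\alpha}\|_{L^2(\partial M_\epsilon)} \ .
\end{equation*}
\Cref{lem:G-0-epsilon} gives \(\sup_{y\in\partial M_\epsilon}|G_0(x;y)|\leq C_x\rho_0(y)\). Since \(\rho_0 = c(\theta)\rho+\mathcal O(\rho^2)\) by \cref{lem: constants beta}, we have \(\rho_0\simeq\epsilon\) on \(\partial M_\epsilon\). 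Together with the uniformly bounded length of \(\partial M_\epsilon\), this gives \(\|G_0(x;\cdot)\|_{L^2(\partial M_\epsilon)}\leq C'_x\epsilon\). \Cref{prop:w-eps-alpha-normal-deriv} gives \(\|\partial_\nu w_{\epsilon,\alpha}\|_{L^2(\partial M_\epsilon)}=\mathcal O(1)\). The product is then \(\leq D_{x,\alpha}\epsilon\). This is where the gas giant case improves on the hyperbolic one: there the normal derivative was \(\mathcal O(-\log\epsilon)\), producing the extra logarithm. For compact \(K\Subset M^\circ\) we take \(D_{K,\alpha}=\max_{x\in K}D_{x,\alpha}\). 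This maximum is finite because \(C_K\) from \cref{lem:G-0-epsilon} works uniformly once \(\epsilon\ll\min_K\rho_0\).

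The main point needing care is the claim that \(\|\partial_\nu w_{\epsilon,\alpha}\|_{L^2}\) is genuinely \(\mathcal O(1)\) with constants uniform in \(\epsilon\), since the \(\epsilon\)-dependence of the right-hand sides is somewhat delicate for \(\alpha<1\) and \(\alpha=1\). We take this as given from \cref{prop:w-eps-alpha-normal-deriv}, which in turn rests on \cref{lem: g i alpha}. In that lemma the key input is that \(\rho_0^{1-\alpha}\) is integrable against \(\partial_{\nu_z}G_0\) uniformly in \(\epsilon\), so the logarithmic divergence of \cref{lem:normal_error_principal} disappears. The other ingredient is the uniform invertibility of \(\tfrac12(I+N_\epsilon^\#)\) from \cref{prop:double-layer-N-sharp}. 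With these inputs the proposition follows by assembling the three terms.
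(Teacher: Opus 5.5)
Your proposal is correct and is essentially the paper's own proof. It uses Green's formula on \(M_\epsilon\), the identities \(I_{1,\alpha}=\chi-1\) and \(I_{2,\alpha}=\tilde u_{\epsilon,\alpha}\) from \cref{lem:w-eps-alpha-laplacian} for the volume term, and Cauchy--Schwarz with \cref{lem:G-0-epsilon} and \cref{prop:w-eps-alpha-normal-deriv} for \(r_{\epsilon,\alpha}\). Your caution about \(\alpha=1\) is warranted: with the lemma's convention the middle term is \(-(\chi(x)-1)\epsilon(\log\epsilon-1+\log 2)\), which is the sign the paper itself uses in the proof of \cref{thm:ggg}, so the plus sign in the statement is a typo (as is the restated Laplacian in the paper's proof of this proposition, which drops the minus sign).
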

\begin{proof}
    By Green's formula we have
    \begin{equation}
        \begin{split}
            \label{eq:w_eps_alpha-greens-formula}
            w_{\epsilon,\alpha}(x)
            &= \int_{M_\epsilon}G_0(x;y)\Delta_{\bar g_0 ,y}w_{\epsilon,\alpha}(y)\vol_{\bar g_0}(y)-\int_{\partial M_\epsilon}G_0(x;y) \partial_\nu w_{\epsilon,\alpha}(y) \vol_h(y) \ .
        \end{split}
    \end{equation}
    By \cref{lem:w-eps-alpha-laplacian}, it follows that 
    \begin{equation*}
        \Delta_{\bar{g}_0}w_{\epsilon,\alpha}(x) = \begin{cases}
            \frac{1}{(\alpha-1)(2-\alpha)}f_{1,\alpha}(x) \epsilon^{2-\alpha} +f_{2,\alpha}(x) \quad &\text{if \(\alpha> 1 \)}\\
            f_{1,1}(x)\epsilon(\log\epsilon-1+\log 2)  +f_{2,1}(x) \quad &\text{if \(\alpha = 1 \)}\\
            \frac{-2^{\alpha-1}}{\alpha-1}f_{1,\alpha} (x)\epsilon + f_{2,\alpha} (x)&\text{if \(\alpha< 1 \)}
        \end{cases}
    \end{equation*}
    Furthermore by \cref{lem:w-eps-alpha-laplacian}, we also have 
    \begin{equation}
        \int_{M_\epsilon} G_0(x;y) f_{1,\alpha}(y) \vol_{\bar g_0}(y) = \chi(x) -1  \ ,
    \end{equation}
    and for each \(x\in M^\circ  \) we have 
    \begin{equation}
        \tilde u_{\epsilon,\alpha}(x) = \int_{M_\epsilon} G_0(x;y) f_2(y) \vol_{\bar g_0 }(y) = \mathcal{O}(1) \quad \text{as \(\epsilon\to 0 \).}
    \end{equation}
    Finally if \(K\Subset M^\circ \) is compact, then there is a smooth function \(\tilde u_\alpha(x) \in C^\infty(K)\) such that \(\tilde u_{\epsilon,\alpha }\) converges uniformly to \(\tilde u_\epsilon \) on \(K \).

    We set 
    \begin{equation}
        r_{\epsilon,\alpha}(x): = -\int_{\partial M_\epsilon} G_0(x;y) \partial_\nu w_{\epsilon,\alpha}(y) \vol_h(y) \ .
    \end{equation}
    By \cref{prop:w-eps-alpha-normal-deriv}, it follows that \(\|\partial_\nu w_\epsilon(z)\|_{L^2(\partial M_\epsilon)} = \mathcal{O}(1)\) as \(\epsilon\to 0 \) and by \cref{lem:G-0-epsilon}, it follows that there is a constant \(C_x \) such that  \(\sup_{y\in \partial M_\epsilon}|G_0(x;y)| \leq C_x \epsilon \).
    Thus, there is a constant \(D_{x,\alpha} \) such that 
    \begin{equation}
        |r_{\epsilon,\alpha}(x)| = \left|\int_{\partial M_\epsilon}G_0(x;y)\partial_\nu w_\epsilon(y) \vol_h(y) \right| \leq \|G_0(x;\cdot)\|_{L^2(\partial M_\epsilon)}\cdot \|\partial_\nu w_\epsilon\|_{L^2(\partial M_\epsilon)} \leq D_{x,\alpha} \epsilon \ .
    \end{equation}
    If \(K\Subset M^\circ  \) is compact, then taking \(D_{K,\alpha} = \sup_{x\in K }D_{x,\alpha}\) we conclude that \(\sup_{x\in K}|r_{\epsilon,\alpha}(x)| \leq D_{K,\alpha}\epsilon\).
    Combining these facts, if \(\alpha> 1\) then 
    \begin{equation}
        \begin{split}
            w_{\epsilon,\alpha}(x)&= \int_{M_\epsilon}G_0(x;y) \Delta_{\bar g_0} w_{\epsilon,\alpha}(x)\vol_{\bar{g}_0}(y) -\int_{\partial M_\epsilon} G_0(x;y) \partial_\nu w_{\epsilon,\alpha}(y) \vol_h(y)\\
            &=\int_{M_\epsilon}G_0(x;y) \left(\frac{\epsilon^{2-\alpha}}{(\alpha-1)(2-\alpha)}f_{1,\alpha}(y) +f_{2,\alpha}(y)\right)\vol_{\bar{g}_0}(y) \\
            &\qquad \qquad \qquad \qquad \qquad\qquad -\int_{\partial M_\epsilon} G_0(x;y) \partial_\nu w_{\epsilon,\alpha}(y) \vol_h(y)\\
            &= \frac{1}{(\alpha-1)(2-\alpha)}(\chi(x)-1) \epsilon^{2-\alpha} + \tilde u_{\epsilon,\alpha}(x) + r_{\epsilon,\alpha}(x) \ ,
        \end{split}
    \end{equation}
    if \(\alpha =1 \), then 
    \begin{equation}
        \begin{split}
            w_{\epsilon,1} (x)&=  \int_{M_\epsilon}G_0(x;y) \Delta_{\bar g_0} w_{\epsilon,1}(x)\vol_{\bar{g}_0}(y) -\int_{\partial M_\epsilon} G_0(x;y) \partial_\nu w_{\epsilon,\alpha}(y) \vol_h(y)\\
            &=  \int_{M_\epsilon}G_0(x;y) (f_{1,1}(y) \epsilon(\log\epsilon-1+\log 2) +f_{2,1}(y))\vol_{\bar{g}_0}(y) \\
            &\qquad \qquad \qquad \qquad \qquad\qquad -\int_{\partial M_\epsilon} G_0(x;y) \partial_\nu w_{\epsilon,1}(y) \vol_h(y)\\
            & = (\chi(x)-1) \epsilon(\log\epsilon-1+\log 2)  + \tilde u_{\epsilon,1}(x) + r_{\epsilon,\alpha}(x) \ ,
       \end{split}
    \end{equation}
    if \(\alpha< 1 \), then 
    \begin{equation}
        \begin{split}
            w_{\epsilon,\alpha} (x)&= \int_{M_\epsilon}G_0(x;y) \Delta_{\bar g_0} w_{\epsilon,\alpha}(x)\vol_{\bar{g}_0}(y) -\int_{\partial M_\epsilon} G_0(x;y) \partial_\nu w_{\epsilon,\alpha}(y) \vol_h(y)\\
            &= \int_{M_\epsilon}G_0(x;y) \left(\frac{-2^{\alpha-1}}{\alpha-1}f_{1,\alpha}(y) \epsilon+f_{2,\alpha}(y)\right)\vol_{\bar{g}_0}(y) \\
            &\qquad \qquad \qquad \qquad \qquad\qquad-\int_{\partial M_\epsilon} G_0(x;y) \partial_\nu w_{\epsilon,\alpha}(y) \vol_h(y)\\
            &= -\frac{2^{\alpha-1}}{\alpha-1}(\chi(x)-1) \epsilon + \tilde u_{\epsilon,\alpha}(x) + r_{\epsilon,\alpha}(x)
        \end{split}
    \end{equation}
    with \(\tilde u_{\epsilon,\alpha} = \mathcal O(1) \) as \(\epsilon\to 0 \), and  a constant \(D_{x,\alpha }\) such that \(|r_{\epsilon,\alpha}(x)| \leq D_{x,\alpha}\epsilon \) as \(\epsilon \to 0 \). If \(K\Subset M^\circ \) then there are functions \(\tilde u_{\alpha }\) such that \(\tilde u_{\epsilon,\alpha}\) converges uniformly to \(\tilde u_\alpha \) on \(K \)  and constants \(D_{K,\alpha }\) such that \(\sup_{x\in K }|r_{\epsilon,\alpha}(x)| \leq D_{K,\alpha}\epsilon\)
    as \(\epsilon \to 0 \).
\end{proof}
\begin{proof}[Proof of \cref{thm:ggg}]
    We have by the definition of \(w_{\epsilon,\alpha }\) in  \cref{lem:w-eps-alpha-laplacian} that 
    \begin{equation}
        u_{\epsilon,\alpha}(x) =w_{\epsilon,\alpha}(x)+\chi(x)v_{\epsilon,\alpha}(x) 
    \end{equation}
    Thus by the definition of \(v_{\epsilon,\alpha }\) in  \eqref{eq:v-epsilon-alpha-def}, their  expansions in \cref{eq:v-alpha-epsilon-asymptotics}, and the expansions of \(w_{\epsilon,\alpha }\) in  \cref{prop:int-G0-delta-w-eps-alpha} we find that if \(\alpha>1 \) then 
    \begin{equation}
        \begin{split}
            u_{\epsilon,\alpha}(x) &= \frac{1}{(\alpha-1)(2-\alpha)}(\chi(x)-1) \epsilon^{2-\alpha} + \tilde u_{\epsilon,\alpha}(x) + r_{\epsilon,\alpha}(x) + \chi(x) v_{\epsilon,\alpha}(x)\\
            &= -\frac{1}{(\alpha-1)(2-\alpha)} \epsilon^{2-\alpha} + \hat U_{\epsilon,\alpha}(x) + r_{\epsilon,\alpha}(x) = \tilde U_{\epsilon,\alpha} (x) + r_{\epsilon,\alpha}(x) \ ,
        \end{split}
    \end{equation}
    if \(\alpha = 1 \), then 
    \begin{equation}
        \begin{split}
            u_{\epsilon,1}(x) &= -(\chi(x)-1)\epsilon(\log\epsilon-1+\log 2) + \tilde u_{\epsilon,1}(x) + r_{\epsilon,1}(x) + \chi(x) v_{\epsilon,1}(x)\\
            &= \epsilon(\log\epsilon-1+\log 2) + \hat U_{\epsilon,1}(x) + r_{\epsilon,1}(x) = \tilde U_{\epsilon,1}(x) + r_{\epsilon,1}(x) \ ,
        \end{split}
    \end{equation}
    and if \(\alpha<1 \), then 
    \begin{equation}
        \begin{split}
            u_{\epsilon,\alpha}(x) &= -\frac{2^{\alpha-1}}{\alpha-1}(\chi(x)-1) \epsilon + \tilde u_{\epsilon,\alpha}(x) + r_{\epsilon,\alpha}(x) + \chi(x) v_{\epsilon,\alpha}(x)\\
            &= \frac{2^{\alpha-1}}{\alpha-1} \epsilon + \hat U_{\epsilon,\alpha}(x) + r_{\epsilon,\alpha}(x) =\tilde U_{\epsilon,\alpha}(x) + r_{\epsilon,\alpha}(x)\  ,
        \end{split}
    \end{equation}
    with for each fixed \(x\in M^\circ \) the function \(\tilde U_{\epsilon,\alpha}(x) = \mathcal O(1)\) as \(\epsilon\to 0 \) and for each fixed \(x\in M^\circ \) there is a constant \(D_{x}\) such that \(|r(x)|\leq D_x \epsilon \). Finally, if \(K \Subset M^\circ \) is compact, then there is a smooth function \(\tilde U_\alpha \in C^\infty(K)\) and a constant \(D_K \)  such that \(\tilde U_{\epsilon,\alpha}(x)\) converges uniformly on \(K \) to \(\tilde U_\alpha \)  and \(\sup_{x\in K }|r_\epsilon(x)| \leq D_K \epsilon\).
\end{proof}
\section{Limiting behaviour of MFET on gas giant surfaces to asymptotically hyperbolic surfaces}\label{sec:blowups}
In this section, we examine the behaviour of the mean first escape times of Brownian motion on the unit disc \(\DD  \) with a family of gas giant metrics 
\begin{equation}\label{eq:gas-giant-metric-disc}
    g_\alpha = \frac{2^\alpha (\dd x _1^2 + \dd x_2^2 )}{(1-x_1^2-x_2^2 )^\alpha}
\end{equation}
as \(\alpha \to 2\), i.e.\ as the metric \(g_\alpha\) tends from gas giant to the  Poincaré metric 
\begin{equation*}
    g_{PC} = \frac{4 (\dd x_1^2 + \dd x_2 ^2 )}{(1-x_1^2 -x_2^2)^2} \ .
\end{equation*}

We proved in \cref{thm:ggg} that if \(\alpha\in (0,2)\), then the mean first escape time of the Brownian motion starting at \(x \in M^\circ \) satisfies \(u_{\epsilon,\alpha}(x)= \mathcal{O}(1)\) as \(\epsilon\to 0 \), whereas in \cref{thm:hyperbolic}, we proved that the  mean first escape time of the Brownian motion starting at \(x \in M^\circ \) satisfies \(u_{\epsilon,2}(x)= \mathcal{O}(-\log \epsilon)\) as \(\epsilon\to 0 \). In this section we examine for \(x = (x_1,x_2) \in \DD \) the behaviour of \(u_{\epsilon,\alpha}(x)\), when \(\epsilon\to 0 \), \(\alpha \to 2 \) and \(\rho_0(x) \to \epsilon\) simultaneously. We show below that this behaviour can be described completely by localising the three variables \(\rho_0(x), \epsilon,\) and \(\alpha\) into various regimes, such that the function \(u_{\epsilon,\alpha}\) is nice with respect to  the natural coordinate systems defining said regimes.  By ``nice", we mean that it admits an asymptotic expansion in those coordinates, or, in more technical (and accurate) language, that it is polyhomogeneous conormal.  We describe this in detail below after doing a few calculations.

In Equation \cref{eq:v-epsilon-alpha-def} the solutions \(u_{\epsilon,\alpha }(x)\) to the boundary value problem \eqref{eq:bvp} on the unit disc \(\DD  \) with the family of metrics \(g_\alpha\) from Equation \cref{eq:gas-giant-metric-disc} are given by
\begin{equation}
    \begin{split}
        u_{\epsilon,\alpha}(x) &= 
        \frac{\rho_0 ^{2-\alpha } \, _2F_1(1,2-\alpha ;3-\alpha ;2 \rho_0 ) - {\epsilon }^{2-\alpha } \, _2F_1(1,2-\alpha ;3-\alpha ;2 \epsilon )}{(\alpha -1) (2-\alpha )}\\ 
        &\hspace{5cm}  +\frac{2^{\alpha -2} (\log (1-2 \rho_0 )-\log (1-2 \epsilon ))}{\alpha -1}   \ .
    \end{split}
\end{equation}
where \(1 < \alpha < 2\), and \(\rho_0 =\rho_0(x) =  \frac{1}{2}(1-x_1^2-x_2^2)\). We can rewrite this as 
\begin{equation}
    \label{eq:v-epsilon-alpha-asymptotics-easy}
    \begin{split}
        u_{\epsilon,\alpha}(x)&= \frac{\rho_0^{2-\alpha}}{(2-\alpha)(\alpha-1)}\left( 1-\left( \frac{\epsilon}{\rho_0} \right)^{2-\alpha} \right)+\frac{\rho_0^{2-\alpha}}{(\alpha-1)}\left( f_{\alpha}(2\rho_0) - \left( \frac{\epsilon}{\rho_0} \right)^{2-\alpha} f_{\alpha}\left(2\frac{\epsilon}{\rho_0}\rho_0\right) \right) \\
            &\hspace{5cm}+ \frac{2^{\alpha -2} \left(\log (1-2 \rho_0 )-\log \left(1-2 \frac{\epsilon}{\rho_0}\rho_0   \right)\right)}{\alpha -1} \ ,
    \end{split}
\end{equation}
where \[f_{\alpha}(z) = \frac{\, _2F_1(1,2-\alpha ;3-\alpha ;z) -1 }{(2-\alpha)} \in C^{\infty}\left( \Big[0,\frac 12\Big)_{z}\times (1,2]_\alpha\right) \ ,\] such that \(f_\alpha(0) = 0 \) and \(\sup_{z\in [0,1/2) }|f_{\alpha}(z)| \leq C\)  for some \(C>0 \) uniformly as \(\alpha\to  2 \). 
There are now three limits we wish to understand:
\begin{enumerate}
    \item the limit as \(\epsilon\to 0 \),
    \item the limit as  \(\rho_0 \to 0 \), note that this also requires \(\epsilon \to 0 \) simultaneously, and 
    \item the limit as \(\alpha\to 2^{-}\).
\end{enumerate}
We intend to describe this limiting behaviour using the theory of polyhomogeneous conormal distributions on manifolds with corners.  Despite the technical sounding jargon, this means that we will find coordinate patches, valid in various regimes of \((x, \epsilon, \alpha)\) space, such that \(u_{\epsilon, \alpha}(x)\) admits an asymptotic expansion when written in those coordinates. However, both to discover these coordinates and to prove the corresponding regularity statements, we use the theory of radial blow-ups developed by Melrose. For an introduction to the subject and a comprehensive overview we refer the reader to \cite{grieser2001basics,MelroseGreen,MelroseDAOMWC,schulze1991psdo}. In general, given some  manifold with corners \(M \) and a submanifold \(N \subset M \) (satisfying some conditions) we construct a new manifold with corners \(M' :=[M,N] \) together with a surjective map \(\pi: [M,N] \to M \) called the blow-down map, such that \(\pi: [M,N]^\circ\to M^\circ \) is a diffeomorphism, by replacing the submanifold \(N \) by a local product of a sphere \(S^{n-k-1}\) of the right dimension and \(\RR^k \) (in the language of the b-calculus such a map is a b-map, see \cite[Section 2.3.2]{grieser2001basics} for more details).  \emph{At every step, the blow-up produces new systems of coordinates valid on different subsets of the space; below we specify at each step what these new valid coordinates are.}  Thus, the reader can take away simply that the function is nice in the sense that nice coordinates can be found according to a breakdown of the space into coordinate patches produced by our blow-up process.  All of this will be visualised using the blow-up picture in \cref{fig:triple-space}.

    First we begin by considering the manifold with corners 
    \[
        X_0 = [0,1)_y \times \left[0,\frac{1}{2}\right)_{\rho_0}\times \left[0,1\right)_\beta \ ,
    \]
    where we are using coordinates
    \[
        y = \epsilon /\rho_0, \quad \rho_0, \quad \beta = 2-\alpha\ .
    \]  
    The functions \(\left\{ y, \rho_0, \beta  \right\}\) still provide \emph{global} boundary defining functions for the space we are analysing. This is because \(0\leq y = \epsilon/\rho_0 \leq 1 \) is bounded. On this manifold with corners we analyse the function
    \begin{equation}
        U(y,\rho_0,\beta) := u_{y\rho_0,2-\beta}(\rho_0) \ . 
    \end{equation}
    It turns out that \(X_0 \) together with the map 
    \[
        \varpi_0: X_0 \to \Big[0,\frac 12\Big)_\epsilon \times \Big[\epsilon,\frac 12\Big)_{\rho_0} \times (1,2]_\alpha :(y,\rho_0,\beta)\mapsto (y\rho_0,\rho_0,2-\beta )
    \]  
    is a blow-up space of the original space \([0,\frac 12)_{\epsilon}\times [\epsilon,\frac 12)_{\rho_0}\times (1,2]_\alpha\).

    In these coordinates we now find that 
    \begin{equation}\label{eq:U-X0}
        U(y,\rho_0,\beta)= \frac{\rho_0^\beta\left( 1-y^\beta \right)}{\beta(1-\beta)} + \frac{\rho_0^{\beta}\left( \overline f(2\rho_0,\beta) -  y^{\beta}\overline f
        \left( 2y \rho_0,\beta\right) \right) }{(1-\beta)} + \frac{2^\beta \left( \log (1-2\rho_0)- \log (1-2y\rho_0) \right)}{1-\beta}\  ,
    \end{equation}
    where \(\overline f (z,\beta) = f_{2-\beta}(z) \in C^\infty([0,\frac 12)_z\times [0,1)_\beta)\).
    By \cite[Remark 2.4]{grieser2001basics}, we note that this function is not a polyhomogeneous conormal function; indeed, such functions cannot have coordinate-dependent exponents. To start to resolve this, we introduce  the coordinate changes  
    \begin{equation}\label{eq:ilog-def}
        \eta = -1/\log (y)\quad\text{and}\quad {\xi = -1/\log(\rho_0)} \ .
    \end{equation}
    This amounts to changing the smooth structure of \(X_0\), and is equivalent to performing   \emph{logarithmic blow-ups} of the boundary hypersurfaces \(B_0^1 = \left\{ y = 0  \right\}\subset X_0\) and \(B_0^2 = \left\{ \rho_0 = 0  \right\} \subset X_0 \) denoted by \([X_0, B_0^1]_{\log} \) and \([X_0,B_0^2]_{\log}\) with the property that the iterated logarithmic blow-up commutes  \cite[p.\ V.28]{MelroseDAOMWC}, i.e.\ the space \(X_1 \) 
    \[
        X_1 = [[X_0,B_0^1]_{\log},B_0^2]_{\log}= [[X_0,B_0^2]_{\log},B_0^1]_{\log} \ ,
    \]
    is a well-defined blow-up of \(X_0 \) with a surjective map  \(\varpi_1: X_1 \to X_0 \) satisfying the properties of a blow-down map, and with a  \(C^\infty\) structure of a manifold with corners
    (see  \cite[p.\ V.28]{MelroseDAOMWC} for more information on this coordinate change -- one of the characteristics of this coordinate change is that finite order vanishing turns into infinite order vanishing). 

    On the space \(X_1 \) with global boundary defining functions \(\left\{ \eta,\xi,\beta \right\}\), the function \(\varpi^*_1 U\) is given by 
    \begin{equation}\label{eq:ilogs}
        \begin{split}
            (\varpi_1^* U)(\eta,\xi,\beta) = \frac{e^{-\beta/\xi}}{\beta(1-\beta)}\left( 1-e^{-\beta/\eta} \right) &+ \frac{e^{-\beta/\xi}}{(1-\beta)}\left( \overline f(2e^{-1/\xi},\beta) + e^{-\beta/\eta}\overline f (2e^{-1/\xi\eta},\beta) \right) \\
            &\quad + \frac{2^{\beta}}{1-\beta}{\left( \log (1-2e^{-1/\xi}) - \log (1-2e^{-1/\xi\eta}) \right)}\ .
        \end{split} 
    \end{equation}
    We will show that there is a blow-up space \(X \) of \(X_1 \) in which \(\varpi^*_2 (\varpi_1 ^*U)\) is polyhomogeneous conormal.

    We do this by first considering the third term in the sum of \cref{eq:ilogs}. The function
    \begin{equation*}
        \frac{2^\beta \left( \log (1-2e^{-1/\xi})- \log (1-2e^{-1/\xi\eta}) \right)}{1-\beta}
    \end{equation*}
    is smooth on \([0,1)_\eta\times [0,1)_\xi\times [0,1)_{\beta}\) and infinitely vanishing at the boundary hypersurfaces given by \(B_1^1=\left\{ \eta = 0 \right\} \subset X_1\) and \(B_1^2= \left\{ \xi = 0  \right\}\subset X_1\).
    
    Next we consider the function 
    \[
        \overline{U}(\eta,\xi,\beta) = \frac{e^{-\beta/\xi}}{\beta(1-\beta)}(1-e^{-\beta/\eta}) \ .
    \]
    We first analyse the limits of this function to the three different coordinate axes away from the origin. In \cref{fig:triple-space}, these regions are denoted by A,B, and C.
    \begin{itemize}
        \item[A:] \(\beta\gg 0 \) fixed: The function \(\overline U(\eta,\xi,\beta)\) is already smooth around here. This is clear from \eqref{eq:ilogs}, as \(e^{-1/\eta}\) and \(e^{-1/\xi}\) are smooth functions on \([0,1)_\eta\) and \([0,1)_\xi\) respectively.  We further see infinite order vanishing as \(\xi \to 0 \), and a 0\textsuperscript{th} order vanishing as \(\eta \to 0 \).
        \item[B:] \(\xi\gg 0 \) fixed: Whenever \(\xi \gg 0 \), we can analyse the function 
        \begin{equation*}
            \overline U(\eta,\xi,\beta) = \frac{e^{-\beta/\xi}}{\beta(1-\beta)}\left( 1-e^{-\beta/\eta} \right) = \frac{e^{-\eta(\beta/\eta\xi)}}{\eta(\beta /\eta)(1-\eta (\beta/\eta))}\left( 1-e^{-\beta/\eta} \right) 
        \end{equation*}
        around the \(\xi \) axis. We do a standard blow-up of this axis away from \(\xi = 0 \).
        We recognise that there are two regimes:
        \begin{enumerate}
            \item where \(\eta > \beta \) and  \[\{\eta,\quad  s := \beta/\eta\}\] provide boundary defining functions 
            \item where \(\beta > \eta \) and \[\{\beta, \quad t := \eta/\beta \}\] provide boundary defining functions. 
        \end{enumerate}
        For case 1, we recognise that 
        \begin{equation}\label{eq:gives back AH}
            \overline U (\eta,\xi,s) = \frac{e^{-\eta s /\xi}}{\eta(1-\eta s )}\left( \frac{1-e^{-s}}{s }   \right)
        \end{equation}
        is polyhomogeneous conormal on \([0,1)_\eta\times [0,1)_s\), since \((1-e^{-s})/s \in C^\infty[0,1)_s \). Notice further that \eqref{eq:gives back AH} returns the asymptotics from \cref{thm:hyperbolic}. By L'Hôpital's rule we find that the limit  \(\lim_{s\to 0}(1-e^{-s})/s =1\), so that \( \overline U(\eta,\xi,0)= 1/\eta = \log \rho_0/\epsilon \), which is what was found in \cref{thm:hyperbolic}.
    
        For case 2, we find that 
        \begin{equation*}
            \overline U(t,\xi,\beta)=  \frac{e^{-\beta/\xi}}{\beta(1-\beta)} \left( 1-e^{-1/t} \right)
        \end{equation*}
        is polyhomogeneous conormal on \([0,1)_{\beta} \times [0,1)_t\), since \(e^{-1/t }\in C^\infty[0,1)_t \) is smooth all the way up to the boundary and rapidly decaying. 
        \item[C:] \(\eta \gg 0 \) fixed: We once again rewrite the function \(\overline U(\eta,\xi,\beta)\)
        \begin{equation*}
            \overline U(\eta,\xi,\beta)=  \frac{e^{-\beta/\xi}}{\beta(1-\beta)}\left( 1-e^{-\beta/\eta} \right)  = \frac{e^{-\beta/\xi}}{\xi (\beta/\xi)(1-\xi(\beta/\xi))}\left( 1-e^{-\xi(\beta/\xi)/\eta} \right)  \ .
        \end{equation*}
        We now do a standard blow-up of the \(\eta\)-axis away from \(\eta = 0 \). We once again get two regimes:
        \begin{enumerate}
            \item where \(\xi>\beta \) and \[\{\xi,\quad q: = \beta/\xi\}\] provide boundary defining functions
            \item where \(\beta > \xi \) and \[\{\beta, \quad r := \xi /\beta \}\] provide boundary defining functions.
        \end{enumerate}
        For case 1, we see that 
        \begin{equation*}
            \overline U(\eta,\xi,q) = \frac{e^{- q}}{\xi q(1-\xi q )}\left( 1-e^{-\xi q/\eta} \right) = \frac{e^{-q}}{\eta(1-\xi q )}\sum_{k=0}^\infty \frac{1}{(k+1)!}\left( \frac{-\xi q}{\eta} \right)^k \in C^\infty[0,1)_\xi\times [0,1)_q \ .
        \end{equation*}
        We can also once again recover \(\overline U(\eta,\xi,0) = 1/\eta = \log \rho_0/\epsilon \) here by noting that \(q = 0 \) if and only if \(\beta = 0\).
        For case 2, we get that 
        \begin{equation*}
            \overline U(\eta,r,\beta)= \frac{e^{-1/r}}{\beta(1-\beta)}\left( 1-e^{-\beta/\eta} \right) =  \frac{e^{-1/r}}{\eta(1-\beta)}\sum_{k=0}^\infty \frac{1}{(k+1)!} \left( \frac{-\beta}{\eta} \right)^k \in C^\infty[0,1)_\beta \times [0,1)_r    
        \end{equation*}
        from which we see that \(\overline U(\nu,\xi,\mu)\) is polyhomogeneous conormal. We still need this blow-up as the limits from the two directions are different.
    \end{itemize}
    
    Finally, we have the origin to analyse. We consider three regimes, and will see that the blow-ups of the coordinate axes as above are required. The different regimes are drawn in \cref{fig:triple-space}.
    \begin{enumerate}
        \item[I:] \(\beta > \xi,\eta\), where the coordinates \[\{\gamma:=\eta/\beta,\quad\vartheta:=\xi/\beta ,\quad \beta \}\] provide boundary defining functions. Here
        \begin{equation*}
            \overline U(\gamma,\vartheta,\beta) = \frac{e^{-1/\vartheta }}{\beta(1-\beta)}\left( 1-e^{-1/\gamma } \right) \ ,
        \end{equation*}
        which is polyhomogeneous conormal on \([0,1)_\gamma\times [0,1)_\vartheta \times [0,1)_\beta \).
        \item[II:] \(\xi> \beta,\eta\), where the coordinates \[\{\nu:=\eta/\xi,\quad\xi,\quad\mu:=\beta/\xi \}\] provide boundary defining functions. Here 
        \begin{equation*}
            \overline U(\nu,\xi,\mu) = \frac{e^{-\mu}}{ \xi\mu(1-\xi \mu)}\left( 1-e^{-\mu/\nu} \right) = \frac{1}{\xi}\left( \frac{e^{-\mu}}{\mu(1-\xi \mu)}\left( 1-e^{-\mu/\nu} \right) \right) \ ,
        \end{equation*}
        which is not polyhomogeneous conormal, but we recognise that there are two regimes 
        \begin{enumerate}
            \item[IIa:] \(\mu>\nu\), where \[\{\sigma := \nu/\mu= \eta/\beta,\quad \xi,\quad \mu= \beta/\xi  \}\] provide boundary defining coordinates. Here:
            \begin{equation*}
                \overline U(\sigma,\xi,\mu) = \frac{1}{\xi}\left( \frac{e^{-\mu}}{\mu(1-\xi \mu)}\left( 1-e^{-1/\sigma} \right) \right)\ ,
            \end{equation*}
            which is polyhomogeneous conormal on \([0,1)_\sigma\times [0,1)_\xi\times [0,1)_\mu\).
            \item[IIb:] \(\nu>\mu\), where \[\{\nu = \eta/\xi,\quad\xi,\quad \tau:=\mu/\nu = \beta/\eta \}\] provide boundary defining functions. Here 
            \begin{equation*}
                \overline U(\nu,\xi,\tau) = \frac{1}{\xi\nu}\left( \frac{e^{-\nu \tau}}{\tau(1-\xi\nu\tau)}\left( 1-e^{-\tau} \right) \right)\ ,
            \end{equation*}
            which is polyhomogeneous conormal on \([0,1)_\nu\times [0,1)_\xi\times [0,1)_\tau\). Once again we recover \cref{thm:hyperbolic} here, because \(\xi \nu = \xi \eta/\xi = \eta\). Because \(\tau = 0 \) if and only if \(\beta= 0 \), we find that  \(\overline U(\nu,\xi,0) = 1/(\xi\nu) = \log \rho_0/\epsilon\) in this regime.
        \end{enumerate}
        \item[III:] \(\eta > \beta,\xi\), where \[\{\eta , \quad \chi:=\xi/\eta,\quad \phi:=\beta/\eta \}\] provide boundary defining functions. Here 
        \begin{equation*}
            \overline U(\eta,\chi,\phi) = \frac{1}{\eta}\left( \frac{e^{-\phi/\chi}}{\phi(1-\eta\phi)}(1-e^{-\phi}) \right)\ ,
        \end{equation*}
        which is not polyhomogeneous conormal on \([0,1)_\eta\times [0,1)_\phi\times [0,1)_\chi \), but we once again recognise that there are two regimes
        \begin{enumerate}
            \item[IIIa:] \(\phi >\chi\), where \[\{\eta,\quad \psi:=\chi/\phi=\xi/\beta,\quad \phi = \beta/\eta\}\] provide boundary defining functions. Here:
            \begin{equation*}
                \overline U(\eta,\psi,\phi) = \frac{1}{\eta}\left( \frac{e^{-1/\psi}}{\phi(1-\eta\phi)}\left( 1-e^{-\phi} \right) \right)\ ,
            \end{equation*}
            which is polyhomogeneous conormal on \([0,1)_\eta\times [0,1)_\psi\times [0,1)_\phi\).
            \item[IIIb:] \(\chi>\phi \), where \[\{\eta,\quad \chi=\xi/\eta,\omega:=\phi/\chi =\beta/\xi\}\] provide boundary defining functions. Here:
            \begin{equation*}
                \overline U(\eta,\chi,\omega)= \frac{1}{\eta}\left( \frac{e^{-\omega}}{\chi\omega(1-\eta\chi\omega)}\left( 1-e^{\chi\omega} \right) \right)\ ,
            \end{equation*}
            which is polyhomogeneous conormal on \([0,1)_\eta\times [0,1)_\chi\times [0,1)_\omega\). Finally we also recover \cref{thm:hyperbolic} here. Since \(\omega = 0 \) if and only if \(\beta = 0 \), we find that \(\overline u_{\epsilon,0} = 1/\eta = \log \rho_0/\epsilon\) in this regime. 
        \end{enumerate}
    \end{enumerate}
    We conclude that the function \(\overline U(\eta,\xi,\beta)\) resolves to a polyhomogeneous conormal function on the iterated blow-up space 
    \begin{equation}\label{eq:triple-space}
        X = 
        \left[ \left[ X_1 ; \left\{ 0 \right\} \right];\left\{ 0 \right\}\times[0,1)_\xi\cup \left\{ 0 \right\}\times[0,1)_\eta \right] \ ,
    \end{equation}
    by blowing up the origin of the space \(X_1 \), and then separately blowing up the coordinate axes, given by the sets \({\{0\}_\eta\times [0,1)_\xi\times \{0\}_\beta}\) and \({[0,1)_\eta\times \{0\}_\xi\times \{0\}_\beta}\).
    \Cref{fig:triple-space} gives an impression of the blow-up space \(X \) as a blow-up from \(X_1 \). By \cite[Remark 3.13]{grieser2001basics}, it follows that this space comes with a surjective map \(\varpi_{2}:X\to X_1 \) satisfying the properties of a blow-down map. Now \(\varpi = \varpi_2\circ\varpi_1 : X\to X_0\) is a well-defined blow-down map.

    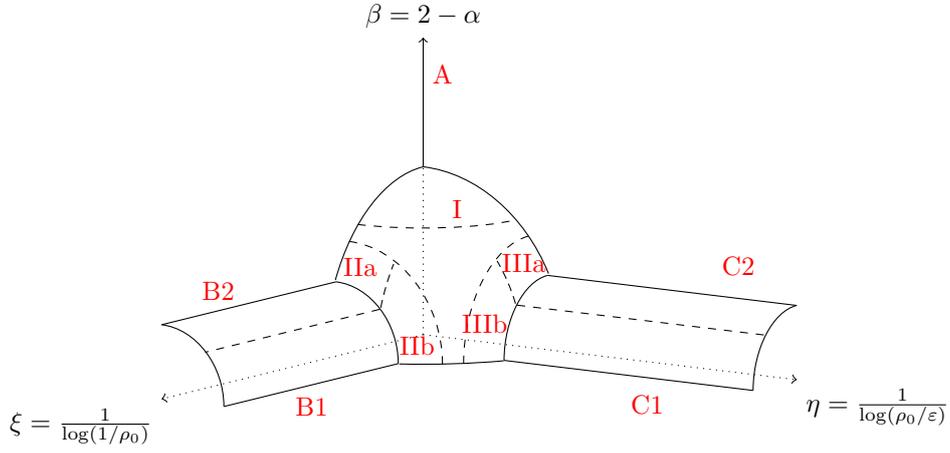
\begin{figure}[tbp]
        \centering
        
        \begin{tikzpicture}[scale=2]
            \tdplotsetmaincoords{80}{125}
            \begin{scope}[tdplot_main_coords]
                
                \draw[->,dotted] (0,0,0) -- (3,0,0) node[anchor=north east]{\(\xi = \frac{1}{\log(1/\rho_0)}\)};
                
                \draw[->,dotted] (0,0,0) -- (0,3,0) node[anchor=north west]{\(\eta = \frac{1}{\log(\rho_0 / \epsilon)}\)};
                
                \draw[dotted] (0,0,0) -- (0,0,1.13) ;
                \draw[->] (0,0,1.13) -- (0,0,2) node[anchor=south]{\(\beta = 2-\alpha\)};
                \node[red,anchor = west] at (0,0,1.75) {A};
                
                \tdplotdrawarc{(0,0,0)}{1.13}{27}{63}{anchor= south east}{\color{red}{IIb} \ } ;
                \tdplotdrawarc[dashed]{(0,0,0.85)}{0.75}{0}{90}{anchor=south west}{{\color{red} I}} ;
                \tdplotsetrotatedcoords{0}{90}{0} 
                \tdplotdrawarc[tdplot_rotated_coords]{(0,0,0)}{1.13}{117}{180}{anchor=north}{} ;
                \tdplotdrawarc[tdplot_rotated_coords,dashed]{(0,0,0.85)}{0.75}{90}{180}{anchor=north}{} ;
                \tdplotdrawarc[tdplot_rotated_coords,densely dashed]{(0,0,0.35)}{1.05}{110}{127}{anchor=south }{\; \; {\color{red}  IIIa}} ;
                
                \tdplotdrawarc[tdplot_rotated_coords]{(0,0,1)}{0.5}{90}{180}{anchor=north}{} ;
                \tdplotdrawarc[tdplot_rotated_coords]{(0,0,3)}{0.5}{90}{180}{anchor=north}{} ;
                \draw[tdplot_rotated_coords] (0,0.5,1) -- (0,0.5,3);
                \draw[tdplot_rotated_coords] (-0.5,0,1) -- (-0.5,0,3);
                \draw[tdplot_rotated_coords,dashed] (-0.35,0.35,1) -- (-0.35,0.35,3);
                \node[red,anchor = west] at (3,0.25,.75) {B2};
                \node[red,anchor = west] at (3,1,0.05) {B1};

                \tdplotsetrotatedcoords{90}{-90}{-90} 
                \tdplotdrawarc[tdplot_rotated_coords]{(0,0,0)}{1.13}{27}{90}{anchor=north}{} ;
                \tdplotdrawarc[tdplot_rotated_coords,dashed]{(0,0,-0.85)}{0.75}{0}{90}{anchor=east}{} ;
                \tdplotdrawarc[tdplot_rotated_coords,densely dashed]{(0,0,-0.35)}{1.05}{20}{37}{anchor= south east}{{\;  \color{red} IIa}} ;

                \tdplotdrawarc[tdplot_rotated_coords]{(0,0,-1)}{0.5}{0}{90}{anchor=north east}{\color{red}{IIIb}} ;
                \tdplotdrawarc[tdplot_rotated_coords]{(0,0,-3)}{0.5}{0}{90}{anchor=north}{} ;
                \draw[tdplot_rotated_coords] (0,0.5,-1) -- (0,0.5,-3);
                \draw[tdplot_rotated_coords] (0.5,0,-1) -- (0.5,0,-3);
                \draw[tdplot_rotated_coords,dashed] (0.35,0.35,-1) -- (0.35,0.35,-3);
                
                \node[red,anchor = west] at (0.25,2.5,.75) {C2};
                \node[red,anchor = north] at (1,2.5,0.05) {C1};
            
            \end{scope}
        \end{tikzpicture}
        \caption{Required blow-up space to resolve the function \( U(y,\rho_0,\beta)\) from Equation \cref{eq:U-X0}.}
        \label{fig:triple-space}
    \end{figure}
    
    Since the functions \(\overline f(2y,\beta) \) and \(\overline f(2\rho_0,\beta)\) from \cref{eq:U-X0} are smooth on \(X_0 \), they remain smooth after blow-ups \cite{grieser2001basics} and thus by following the same procedure as before, one can show that the function 
    \begin{equation}\label{eq:infinite-order-vanishing}
        \frac{e^{-\beta/\xi}}{(1-\beta)}\left( \overline f(2e^{-1/\xi},\beta)+e^{-\beta/\eta} \overline f(2e^{1/\xi\eta},\beta) \right)
    \end{equation}
    resolves to a smooth function on the same space \(X \) given by Equation \cref{eq:triple-space}. Since the function \(\overline f (z,\beta)\) vanishes for \(z = 0 \), by the discussion on logarithmic blow-ups it follows that the order of vanishing of the functions \(\overline f(2e^{-1/\xi},\beta)\) and \(\overline f(2e^{-1/\xi\eta},\beta) \) in Equation \cref{eq:infinite-order-vanishing} are infinite.
    We can therefore conclude that there is a manifold with corners \(X \) and a surjective map \(\varpi: X\to X_0 \), which is a diffeomorphism on the interior,  such that \(\varpi^* U \) is polyhomogeneous conormal on \(X\).

Given the discussion above about the mean first escape times of Brownian motion on a family of gas giant metrics on the unit disc, we conjecture the following. 
\begin{conjecture}\label{con:blow-up}
    Let \((M,g_\alpha )\) be a two-dimensional  smooth manifold with a family of gas giant metrics \(g_\alpha \). Assume that the function \(\beta^{(1)}(\theta)\) from Equation \cref{eq:bvp_simplified} is identically equal to 0, and let  \(U(x,\epsilon,\alpha):= u_{\epsilon,\alpha}(x) \) be the solutions to the boundary value problems \cref{eq:bvp} on the gas giant surfaces \((M,g_\alpha)\). 
    Then there exists a manifold with corners \(Z \) obtained by the blow-up
    \begin{equation}
        Z = [[[[Z_0,B_0^1]_{\log},B_0^2]_{\log},\{0\}],\{0\}\times [0,1)_\xi\cup \{0\}\times [0,1)_\eta] \ ,
    \end{equation}
    where 
    \[
        Z_0 =  \left[0,\frac 12\right)_{y}\times \partial  M \times\left[0,\frac 12\right)_{\rho_0} \times [0,1)_\beta \ ,\qquad y =\frac{\epsilon}{\rho_0}, \quad \beta = 2-\alpha \ ,
    \] 
    the hypersurfaces \(B_0^1 = \{y = 0\}, B_0^2 = \{\rho_0 = 0 \}\subset Z_0\) and the functions 
    \[
        \eta = -1/\log (y) \quad \text{and}\quad \xi = -1/\log (\rho_0 ) \ ,
    \]
    together with a surjective map 
    \[
        \varpi:Z \to \overline{M_\epsilon}\times \left[0,\frac{1}{2}\right)_\epsilon\times (1,2]_\alpha
    \]
    such that \(\varpi \) is a blow-down map and such that \(\varpi^* U\) is polyhomogeneous conormal on \(Z \). 
\end{conjecture}
The manifold \(Z_0  \) is  the natural extension of the manifold \(X_0 \) to the general case, by considering 
\begin{equation*}
    \overline M_\epsilon \setminus M_\delta\times \left[ 0,\frac{1}{2} \right)_\epsilon \cong \partial M \times \left[ \epsilon,\frac 12 \right)_{\rho_0} \times \left[ 0,\frac{1}{2} \right)_\epsilon \ .
\end{equation*}
Then \(Z \) is obtained by performing the same blow-ups as before. The condition of the function \(\beta^{(1)}(\theta)\) being identically 0, is  due to  \cref{lem:normal_error_principal}. As a consequence of \cref{lem:normal_error_principal}, the  we get estimates for \(\partial_\nu w_\epsilon = \mathcal O (\log \epsilon)\), and consequently together with \cref{lem:G-0-epsilon} only we get pointwise estimates for \(u_{\epsilon}(x)\) as \(\epsilon \to 0 \), whereas we require uniform estimates for \cref{con:blow-up}.

\section{Numerical results}\label{sec:numerics}
In this section we will compare the analytical results from the previous sections with simulations of the Brownian motion, and with numerical PDE solvers on the unit disc.

To generate the Brownian motion on the Riemannian manifolds, we use the algorithms described in \cite{Schwarz2023}. Up to a rescaling of constants in the generator of the Brownian motion, Algorithms 1 and 2 provide an efficient method of generating Brownian motion on Riemannian manifolds. The resulting generator is a time rescaled version of the generator used in this paper. To account for this fact we multiply the analytically found solutions by a factor of 2.

\begin{figure}[tp]
    \centering
    \subfloat[\(\alpha = 0.1\)]{\label{fig:0.1}\includegraphics[scale=0.95]{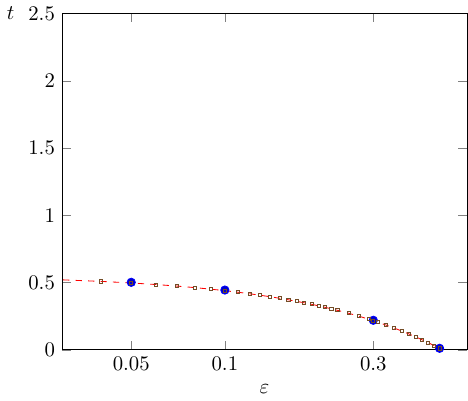}}
    \hfill
    \subfloat[\(\alpha = 0.5\)]{\label{fig:0.5}\includegraphics[scale=0.95]{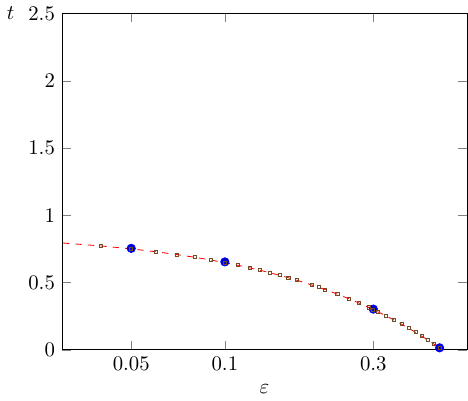}}\\
    \subfloat[\(\alpha = 1.0\)]{\label{fig:1.0}\includegraphics[scale=0.95]{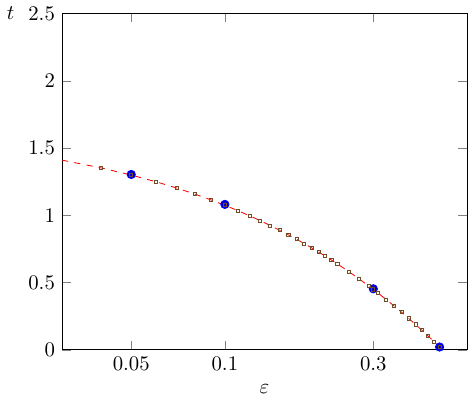}}
    \hfill
    \subfloat[\(\alpha = 1.5\)]{\label{fig:1.5}\includegraphics[scale=0.95]{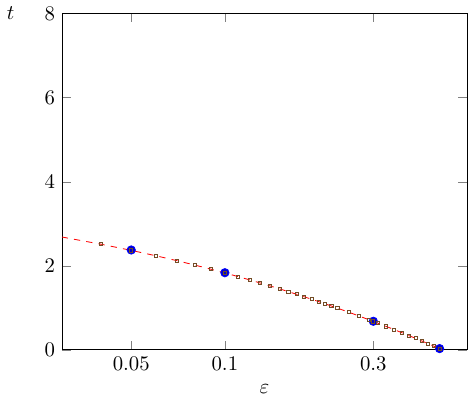}}\\
    \subfloat[\(\alpha = 1.9\)]{\label{fig:1.9}\includegraphics[scale=0.95]{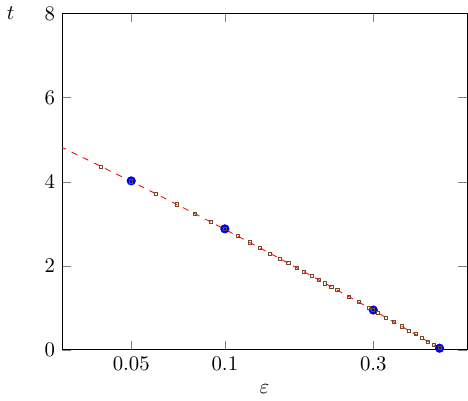}}
    \hfill
    \subfloat[\textit{Asymptotically hyperbolic}]{\label{fig:2.0}\includegraphics[scale=0.95]{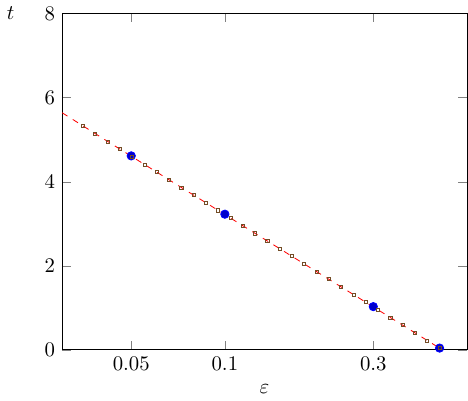}}
    \caption{The mean first escape time of a Monte Carlo simulation of the Brownian motion starting at the origin on the unit disc \(\DD \) with the gas giant metrics from \eqref{eq:gas-giant-metric-disc} of order \(\alpha\) and the Poincaré disc as a function of \(\epsilon \) in blue based on 300 000 simulations.
    The square marks denote the analytically found solutions in Equation \eqref{eq:v-epsilon-def} and in the gas giant case in Equation \eqref{eq:v-epsilon-alpha-def}.
    The dashed red lines denote the value of the numerically found solution to the boundary value problem \cref{eq:bvp}.}
    \label{fig:numerical-hyperbolic-ggg}
\end{figure}

\Cref{fig:numerical-hyperbolic-ggg} shows the mean first escape time of a Monte Carlo simulation of the Brownian motion starting at the origin on the unit disc \(\DD \) with the gas giant metrics from \eqref{eq:gas-giant-metric-disc} of order \(\alpha\) and the Poincaré disc as a function of \(\epsilon \) in blue based on 300 000 simulations.

The square marks denote the analytically found solutions in Equation \eqref{eq:v-epsilon-def} and in the gas giant case in Equation \eqref{eq:v-epsilon-alpha-def}.

Using Wolfram's builtin function \lstinline|ParametricNDSolve|~\cite{WolframR2012}, we solved the boundary value problem \eqref{eq:bvp} on the disc. The value of this numerically found solution at the origin is given in \cref{fig:numerical-hyperbolic-ggg} as the dashed red line.
We can see that both  the Monte Carlo simulations  of the Brownian motion, the analytical solution, and the numerical solutions correspond, and are finite as \(\epsilon\to 0 \) if the metric is gas giant and \( -\log(\epsilon)\) as \(\epsilon\to 0 \) if the metric is asymptotically hyperbolic.

We can also see that indeed as the gas giant parameter \(\alpha \) approaches \(2 \), the mean first escape time of the Brownian motion on the gas giant of order \(\alpha\), approaches the mean first escape time of the Brownian motion on the asymptotically hyperbolic disc.
\appendix
\crefalias{section}{appendix}

\section{Proof of \texorpdfstring{\cref{prop:double-layer-N-sharp}}{Lemma \ref{prop:double-layer-N-sharp}}}\label{app:double-layer-N-sharp-pf-general}
For the proof of \cref{prop:double-layer-N-sharp}, we give the Fourier multipliers of \(N_{\epsilon,\mathrm{prin}}^\#\) on the annulus.  By then using the isometry between the annulus \((\DD\setminus \DD_{1-\overline \delta},g_{\Euc})\), and the manifold \((M\setminus M_{\delta},\bar g_0)\), the Fourier multipliers remains identical on each of the manifolds.

On the annulus \((\DD\setminus \DD_{1-\overline \delta},g_{\Euc})\), the operators \(N_{\epsilon}^\# ,N_{\epsilon,\prin }^\# :L^2(\partial \DD_{\sqrt{1-2\epsilon}})\to L^2(\partial \DD_{\sqrt{1-2\epsilon}})\) are equal and defined by  \cref{eq:N-sharp-def}. In particular, for \(z\in \partial \DD_{\sqrt{1-2\epsilon}}\)
\begin{equation}
    (N^\#_\epsilon f)(z) = (N^\#_{\epsilon,\prin} f)(z) = 2 \int_{\partial \DD_{\sqrt{1-2\epsilon}}} \partial_{\nu_z}G_{0,\prin,\A}(z;y) f(y) \vol_{\partial \DD_{1-2\epsilon}}(y) \ ,
\end{equation}
with \(G_{0,\prin,\A }\) defined in \cref{rem:Greens disc}. We now compute the Fourier multipliers of \(N_{\epsilon,\prin}^\# \) on the annulus. 
\begin{proposition}\label{prop:app}
    Let \(f \in L^2 (\partial \DD_{\sqrt{1-2\epsilon}})\), then 
    \begin{equation}
        \begin{split}
            ({N}_{\epsilon,\mathrm{prin}}^\#f)^\wedge (n)  &=\frac{1}{2\pi}\int_{-\pi}^\pi e^{-in\vartheta_z}\frac{-(1-\epsilon)\epsilon }{\pi\sqrt{1-2\epsilon}}\int_{-\pi}^\pi \frac{\sqrt{1-2\epsilon}f(\vartheta_y)\dd \vartheta_y}{(1-2\epsilon)\cos(\vartheta_z - \vartheta_y)+2(1-\epsilon)\epsilon -1 } \dd \vartheta_z  \\
            &= (1-2\epsilon)^{|n|}\hat{f}(n) \ ,
        \end{split}
        \label{eq:N_sharp_fourier_series}
    \end{equation}
    for sufficiently small \(\epsilon>0 \).
\end{proposition}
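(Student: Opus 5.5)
The plan is to reduce the statement to the classical Fourier expansion of the Poisson kernel, in two steps: first obtain the explicit kernel (the first equality in \eqref{eq:N_sharp_fourier_series}), then compute its Fourier multipliers (the second equality).

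\textbf{Step 1: the kernel.} Starting from \cref{rem:Greens disc}, I will write both points on the circle of radius $a:=\sqrt{1-2\epsilon}$, as $z=ae^{i\vartheta_z}$ and $y=ae^{i\vartheta_y}$. I will then differentiate $G_{0,\prin,\A}(z;y)$ in $r_z$, with the sign convention $\partial_{\nu}$ outward on $\partial\DD_a$. The formula \eqref{eq:g-0-prin-normalderiv} already computed in the proof of \cref{lem:normal_error_principal} can be reused with $r_y=a$ substituted. Simplifying with $a^2=1-2\epsilon$ should give
\[
2\,\partial_{\nu_z}G_{0,\prin,\A}(z;y)\big|_{|y|=a}=\frac{-(1-\epsilon)\epsilon}{\pi\sqrt{1-2\epsilon}}\cdot\frac{1}{(1-2\epsilon)\cos(\vartheta_z-\vartheta_y)+2(1-\epsilon)\epsilon-1}.
\]
Together with $\vol_h(y)=\sqrt{1-2\epsilon}\,\dd\vartheta_y$ and \cref{def:fourier}, this yields the first equality.

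\textbf{Step 2: rewriting as a Poisson kernel.} Set $r:=a^2=1-2\epsilon\in(0,1)$ for small $\epsilon$. Since $1-2\epsilon+2\epsilon^2=\tfrac12\bigl(1+r^2\bigr)$, the denominator equals $-\tfrac12\bigl(1-2r\cos u+r^2\bigr)$ with $u=\vartheta_z-\vartheta_y$. Also $4\epsilon(1-\epsilon)=1-r^2$. The factors $\sqrt{1-2\epsilon}$ from the measure and from the prefactor cancel, so the operator becomes convolution on the circle:
\[
(N^\#_{\epsilon,\prin}f)(\vartheta_z)=\int_{-\pi}^{\pi}P_r(\vartheta_z-\vartheta_y)f(\vartheta_y)\,\dd\vartheta_y,\qquad P_r(u)=\frac{1}{2\pi}\,\frac{1-r^2}{1-2r\cos u+r^2}.
\]

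\textbf{Step 3: Fourier coefficients.} I will use the identity $\sum_{n\in\ZZ}r^{|n|}e^{inu}=\frac{1-r^2}{1-2r\cos u+r^2}$. It follows by summing two geometric series in $re^{\pm iu}$, which converge absolutely and uniformly because $r<1$. The Fourier coefficients of $P_r$ with respect to the normalized measure are therefore $r^{|n|}/(2\pi)$. Since convolution becomes multiplication, and the $2\pi$ normalization of \cref{def:fourier} absorbs the remaining factor, this gives $(N^\#_{\epsilon,\prin}f)^\wedge(n)=r^{|n|}\hat f(n)=(1-2\epsilon)^{|n|}\hat f(n)$. Fubini is justified because $P_r$ is bounded for fixed $\epsilon>0$, and the result extends from smooth $f$ to $L^2$ by density.

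\textbf{Main obstacle.} I expect the main obstacle to be Step 1: carrying out the differentiation of the logarithmic terms without sign or factor errors. This includes checking the outward orientation, the factor $2$ in $N^\#$, and the restriction to the circle of radius $\sqrt{1-2\epsilon}$ rather than $1-2\epsilon$. As a sanity check, I will verify that the $n=0$ multiplier equals $1$, which is consistent with $\int_{\partial M_\epsilon}\partial_\nu G_0\,\vol_h=1$ as used in the proof of \cref{lem:w_eps_Laplacian}.
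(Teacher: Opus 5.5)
Your proposal is correct and follows the same route as the paper: compute the kernel $2\partial_{\nu_z}G_{0,\prin,\A}$ on the circle of radius $\sqrt{1-2\epsilon}$, observe that $N^\#_{\epsilon,\prin}$ is a convolution operator, and read off its Fourier multipliers. The only difference is how you evaluate the coefficient integral. You recognise the Poisson kernel $P_r$ with $r=1-2\epsilon$ and use $\sum_{n\in\ZZ} r^{|n|}e^{inu}=\frac{1-r^2}{1-2r\cos u+r^2}$, whereas the paper computes the same integral $I(n)$ via the residue at $w=1-2\epsilon$; your version is slightly cleaner and avoids the orientation bookkeeping of the contour substitution.
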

\begin{proof}
    The principal part of the Green's function for the annulus  \(G_{0,\mathrm{prin},\A}(x;y) \) is given  by \cref{rem:Greens disc}.
    In polar coordinates \((r_x,\vartheta_x )\) and \((r_y,\vartheta_y) \) for the annulus this is given by 
    \begin{equation}
        G_{0,\prin,\A}(x;y) = \frac{\log \left(r_x^2-2 r_x r_y \cos (\vartheta_x-\vartheta_y)+r_y^2\right)-\log \left(r_x^2 r_y^2-2 {r_x} {r_y} \cos (\vartheta_x-\vartheta_y)+1\right)}{4 \pi } \ .
    \end{equation} 
    Taking \(x \in \DD\setminus \DD_{1-\overline \delta}\) to a point \(z \in \partial \DD_{\sqrt{1-2\epsilon} }\) and taking an outward pointing normal derivative \(\partial_{\nu_z}\) shows that in polar coordinates the principal kernel \(N_{\epsilon,\mathrm{prin}}^\#(\vartheta_z,\vartheta_y)\) 
    is given by 
    \begin{equation}
        \begin{split}
            N_{\epsilon,\mathrm{prin}}^\#(\vartheta_z,\vartheta_y) &= \partial\nu_{z}|_{|z|,|y| = \sqrt{1-2\epsilon}}G_{0,\mathrm{prin}, \A }(\vartheta_z,\vartheta_y) \\
            &= -\frac{(\epsilon -1) \epsilon }{2 \pi  \sqrt{1-2\epsilon} ((2 \epsilon -1) \cos (\vartheta_z-\vartheta_y)+2 (\epsilon -1) \epsilon +1)} \ .
        \end{split}
        \label{eq:kernel_normal_derivative}
    \end{equation}
    Since the circle of radius \(\sqrt{1-2\epsilon } \) has its induced measure given by  \(\sqrt{1-2\epsilon}\dd \vartheta_z \) we find that \((N_{\epsilon,\prin}^\# f)^\wedge (n)\) is given by the integral on the right hand side of \eqref{eq:N_sharp_fourier_series}.
    By Fubini-Tonelli, we may exchange the order of integration in \eqref{eq:N_sharp_fourier_series}. We find that 
    \begin{equation*}
        (N_\epsilon^\# f)^\wedge (n) = \frac{1}{2\pi}\int_{-\pi}^\pi f(\vartheta_y) \frac{-(1-\epsilon)\epsilon }{\pi\sqrt{1-2\epsilon}} \int_{-\pi}^\pi \frac{e^{-in\vartheta_z}\sqrt{1-2\epsilon}\dd \vartheta_z}{(1-2\epsilon)\cos(\vartheta_z - \vartheta_y)+2(1-\epsilon)\epsilon -1 } \dd \vartheta_y \ .
    \end{equation*}
    \begin{lemma}
        The integral 
        \begin{equation}
            I(n)= \frac{-(1-\epsilon)\epsilon }{\pi\sqrt{1-2\epsilon}} \int_{-\pi}^\pi \frac{e^{-in\vartheta_z}\sqrt{1-2\epsilon}\dd \vartheta_z}{(1-2\epsilon)\cos(\vartheta_z - \vartheta_y)+2(1-\epsilon)\epsilon -1 } = (1-2\epsilon)^{|n|} e^{-in\vartheta_y} \ .
            \label{eq:N_sharp_fourier_series-integral}
        \end{equation}
    \end{lemma}
    \begin{proof}
        Assume first that \(n\geq 0 \). Take a change of variables \(\Theta = \vartheta_z -\theta_y \). So for \eqref{eq:N_sharp_fourier_series-integral}, we are now taking the  integral 
        \begin{equation}
            e^{-in\vartheta_y}\frac{-(1-\epsilon)\epsilon}{\pi\sqrt{1-2\epsilon}}\int_{-\pi}^\pi \frac{e^{-in\Theta}\sqrt{1-2\epsilon}\dd \Theta}{(1-2\epsilon)\cos(\Theta)+2(1-\epsilon)\epsilon -1 } \ ,
        \end{equation}
        We apply the change of variables \(w= e^{-i\Theta},iw\dd  w=  \dd \Theta\), and use the calculus of residues. 
        Then
        \begin{equation}
            \begin{split}
                I(n) &=e^{-in\theta_y}\frac{-(1-\epsilon)\epsilon}{\pi}\int_{|w|=1}\frac{iw^{n-1}\dd w}{\frac{1}{2}(1-2\epsilon)(w+1/w)+2(1-\epsilon)\epsilon-1}\\
                &=-e^{-in\theta_y}{2(1-\epsilon)\epsilon}\cdot \mathrm{Res}_{w= 1-2\epsilon}\frac{w^n }{\frac{1}{2}(1-2\epsilon)(w^2+1)+2(1-\epsilon)\epsilon w-w} \\
                &= e^{-in\theta_y}(1-2\epsilon)^{n}  \ ,
            \end{split} 
        \end{equation}
        due to an orientation change.
        For \(n<0 \), we have the symmetry of the integral \eqref{eq:N_sharp_fourier_series-integral}, such that 
        \[
            I(n)=I(-n)  = e^{-in\vartheta_y} (1-2\epsilon)^{-n}=e^{-in\vartheta_y}(1-2\epsilon)^{|n|} \ ,
        \]
        which proves the lemma.
    \end{proof}
    So to complete the proof to \cref{prop:app}, we now  write \cref{eq:N_sharp_fourier_series} as 
    \begin{equation*}
        (N^\#_\epsilon f)^\wedge (n) = \frac{1}{2\pi}\int_{-\pi}^\pi f(\vartheta_y) e^{-in\vartheta_y}(1-2\epsilon)^{|n|} = (1-2\epsilon)^{|n|} \hat{f}(n) \ ,
    \end{equation*}
    which completes the proof.
\end{proof}
\begin{proof}[Proof of \cref{prop:double-layer-N-sharp}]
    Let \(\Phi: (\widetilde M\setminus M_\delta)\to  (\RR^2\setminus \DD_{1-\overline \delta})\) be the isometry from \cref{def:rho0}. Since \(\Phi \) is an isometry from \(\partial M_\epsilon \) to \(\partial \DD_{\sqrt{1-2\epsilon} }\)  when restricted to \(\partial M_\epsilon\), the pullback \(\Phi^* \) induces an isometric isomorphism \(\Phi^*: L^2(\partial \DD_{\sqrt{1-2\epsilon}})\to L^2(\partial M_\epsilon)\). Now the operator \(N^\#_{\epsilon,\prin }:L^2(\partial M_\epsilon)\to L^2(\partial M_\epsilon )\) can abstractly be defined as 
    \begin{equation}\label{eq:N-sharp-abstract}
        N^\#_{\epsilon,\prin} = (\Phi^*)\circ N_{\epsilon}^\# \circ (\Phi^{-1})^* :L^2(\partial M_\epsilon)\to L^2(\partial M_\epsilon)\ .
    \end{equation}
    Since \(G_{0,\prin}\in \mathcal D'(M\times M )\) is defined in terms of \(G_{0,\prin,\A }\) and \(\Phi \), it follows that the definitions from \cref{def:N-sharp} and \cref{eq:N-sharp-abstract} correspond. Since \(\Phi \) is an isometric isomorphism, for \(f\in L^2(\partial M_\epsilon)\)  the Fourier transform of \(f \) is  defined in \cref{def:fourier} as  
    \begin{equation*}
       \hat{f}(n) = \mathcal F \circ (\Phi^{-1})^* \ ,
    \end{equation*}
    such that 
    \begin{equation*}
        (N_{\epsilon,\prin}^\# f)^\wedge (n) = (\mathcal F\circ  N_{\epsilon}^\#\circ (\Phi^{-1})^*f)(n) =  (1-2\epsilon)^{|n|} (\mathcal{F} \circ (\Phi^{-1})^* f )(n) = (1-2\epsilon)^{|n|}\hat{f}(n) \ ,
    \end{equation*}
    which is \cref{prop_part:double-layer-N-sharp-prin} of \cref{prop:double-layer-N-sharp}.

    The proof  of \cref{prop:double-layer-N-sharp} is complete once we have shown that  \(\|N_{\epsilon,\rem}^\#\|_{L^2\to L^2 } = \mathcal{O}(\epsilon)\) as \(\epsilon\to 0 \).
    \begin{lemma}
        The estimate 
        \begin{equation}
            \|N_{\epsilon,\rem}^\#\|_{L^2(\partial M_\epsilon)\to L^2(\partial M_\epsilon)} = \mathcal{O}(\epsilon)
        \end{equation}
        as \(\epsilon\to 0 \) holds.
    \end{lemma}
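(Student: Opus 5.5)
The plan is to bound the operator norm through its Schwartz kernel, using the joint Taylor expansion of \(G_{0,\rem}\) from \cref{prop:greens decomposition}. The kernel of \(N^\#_{\epsilon,\rem}\) on \(\partial M_\epsilon\times\partial M_\epsilon\) is
\[
K_\epsilon(z,y)=2\,\partial_{\nu_z}G_{0,\rem}(z;y)\big|_{\rho_0(z)=\rho_0(y)=\epsilon}.
\]
Both \(z\) and \(y\) lie in \(M\setminus M_{\delta/4}\) once \(\epsilon\ll\delta\). There \(G_{0,\rem}\) is jointly smooth by \cref{prop_part:greens decomp 1} of \cref{prop:greens decomposition}, so \(K_\epsilon\) is a smooth kernel and no singular-integral analysis is needed. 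Note, however, that \(\partial_{\nu_z}G_{0,\rem}\) contains a contribution from \(\partial_{\nu_z}G_{0,\prin}\) with the cutoff \(\kappa\) differentiated. I will check this term separately: it is supported where \(\delta/4<d_{\bar g_0}(z,y)<\delta/3\), and there \(G_{0,\prin,\A}\) vanishes to first order in \(\rho_0(y)\). Hence it is also \(\mathcal O(\epsilon)\).

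The main step is the pointwise bound \(\sup_{z,y\in\partial M_\epsilon}|K_\epsilon(z,y)|\le C\epsilon\). By \cref{prop_part:greens decomp 2} of \cref{prop:greens decomposition}, evaluated at \(\rho_0(y)=\epsilon\) (with \(\rho\) and \(\rho_0\) comparable by \cref{lem: constants beta}), we get
\[
|\partial_{\nu_z}G_{0,\rem}(z;y)|\le C_0\epsilon+C_1(2\epsilon)^2+C_2(2\epsilon)^3=\mathcal O(\epsilon).
\]
This works uniformly in \(z\). Conceptually, \(G_{0,\rem}(z;y)\) vanishes for \(y\in\partial M\), and so does its \(z\)-derivative. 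A first-order Taylor expansion in \(\rho_0(y)\) at fixed \(z\) then gives \(|\partial_{\nu_z}G_{0,\rem}(z;y)|\le \rho_0(y)\sup|\partial_{\rho_0(z)}\partial_{\rho_0(y)}G_{0,\rem}|\). The supremum is finite by joint smoothness on the compact set \((\overline M\setminus M_{\delta/4})^2\). I will use this cleaner argument rather than rely on the exact constants in the proposition.

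Next I apply Schur's test, or equivalently the Hilbert–Schmidt bound:
\[
\|N^\#_{\epsilon,\rem}\|_{L^2\to L^2}\le\Big(\sup_z\int_{\partial M_\epsilon}|K_\epsilon|\,\vol_h(y)\Big)^{1/2}\Big(\sup_y\int_{\partial M_\epsilon}|K_\epsilon|\,\vol_h(z)\Big)^{1/2}\le C\epsilon\,|\partial M_\epsilon|_h.
\]
Since \(|\partial M_\epsilon|_h=2\pi\sqrt{1-2\epsilon}\) is bounded, this gives \(\mathcal O(\epsilon)\).

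The main obstacle I expect is uniformity of the constants as \(\epsilon\to0\). The integration curves \(\partial M_\epsilon\) move with \(\epsilon\), so one must confirm that the normal derivative and the induced measure are controlled uniformly. Via the isometry \(\Phi\) to circles of radius \(\sqrt{1-2\epsilon}\), where \(\partial_\nu=-\partial_{\rho_0}\) up to a smooth bounded factor, this is immediate. The only other subtlety is the cutoff-derivative contribution noted above, which the same vanishing-at-\(\partial M\) argument handles.
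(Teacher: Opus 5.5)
Your proposal is correct and follows essentially the same route as the paper: you derive a pointwise bound \(\sup_{z,y\in\partial M_\epsilon}|\partial_{\nu_z}G_{0,\rem}(z;y)| = \mathcal O(\epsilon)\) from the joint smoothness of \(G_{0,\rem}\) and its vanishing at \(y\in\partial M\), then integrate this bounded kernel over a boundary curve of bounded length. The differences are minor: the paper quotes part 2 of \cref{prop:greens decomposition} where you redo a one-step Taylor expansion, and it uses Cauchy--Schwarz where you use Schur's test; your separate check of the differentiated cutoff \(\kappa_0\) is harmless but redundant, since joint smoothness of \(G_{0,\rem}\) already covers it.
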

    \begin{proof}
        By \cref{prop:greens decomposition}, it follows that there are  constants \(C_0,C_1,C_2 \) such that  for all \(y\in \partial M_\epsilon\) and all \(\epsilon>0 \)
        \begin{equation}
            \begin{split}
                \sup_{z\in \partial M_\epsilon}\left|\partial_{\nu_z}G_{0,\rem}(z;y)\right|_{{y\in \partial M_\epsilon}} &\leq C_0 \rho_0(y) + C_1(\rho_0(y)+\epsilon)^2 + C_2(\rho_0(y) + \epsilon)^3\bigg|_{\rho_0(y)\in \partial M_\epsilon} \\
                &= C_0 \epsilon + C_1(\epsilon+\epsilon)^2 + C_2(\epsilon+ \epsilon)^3
            \end{split}
        \end{equation}
        holds on the kernel of \(N_{\epsilon,\rem}^\# \). 
        Therefore, 
        \begin{equation}
            \begin{split}
                \int_{\partial M_\epsilon}&\left|\int_{\partial M_\epsilon}\partial_{\nu_z}G_{0,\rem}(z;y)f(y)\vol_h(y)\right|^2\vol_h(z) \leq( C_0^2 \epsilon^2 + 4C_1 \epsilon^4 + 4 C_2 \epsilon^6)|\partial M_\epsilon|_h^2\cdot \|f\|_{L^2}^2\ ,
            \end{split}
        \end{equation}
        from which the conclusion follows.
    \end{proof}
    This concludes the proof of \cref{prop_part:double-layer-N-sharp-rem} of \cref{prop:double-layer-N-sharp}.
\end{proof}

\bibliographystyle{siamplain}
\bibliography{refs}

@article{nursultanov2021mean,
  author     = {Nursultanov, Medet and Tzou, Justin C. and Tzou, Leo},
  title      = {On the mean first arrival time of {B}rownian particles on
                {R}iemannian manifolds},
  journal    = {J. Math. Pures Appl. (9)},
  fjournal   = {Journal de Math\'ematiques Pures et Appliqu\'ees. Neuvi\`eme
                S\'erie},
  volume     = {150},
  year       = {2021},
  pages      = {202--240},
  issn       = {0021-7824,1776-3371},
  mrclass    = {58J65 (60J65 92C37)},
  mrnumber   = {4248467},
  mrreviewer = {Anton\ Thalmaier},
  doi        = {10.1016/j.matpur.2021.04.006}
}

@book{taylor2011pde2,
  author    = {Taylor, Michael E.},
  title     = {Partial differential equations {II}. {Q}ualitative studies of
               linear equations},
  series    = {Applied Mathematical Sciences},
  volume    = {116},
  edition   = {Third},
  publisher = {Springer, Cham},
  year      = {2023},
  pages     = {xxiii+687},
  isbn      = {978-3-031-33699-7; 978-3-031-33700-0},
  mrclass   = {35-01 (46N20 47F05 47N20)},
  mrnumber  = {4703939},
  doi       = {10.1007/978-3-031-33700-0}
}

@book{lee2018riemann,
  author     = {Lee, John M.},
  title      = {Introduction to {R}iemannian manifolds},
  series     = {Graduate Texts in Mathematics},
  volume     = {176},
  edition    = {Second},
  publisher  = {Springer, Cham},
  year       = {2018},
  pages      = {xiii+437},
  isbn       = {978-3-319-91754-2; 978-3-319-91755-9},
  mrclass    = {53-01 (53B20 53B30 53C20 53C21)},
  mrnumber   = {3887684},
  mrreviewer = {Robert\ J.\ Low}
}

@article{Nursultanov2023,
  author     = {Nursultanov, Medet and Trad, William and Tzou, Justin and
                Tzou, Leo},
  title      = {The narrow capture problem on general {R}iemannian surfaces},
  journal    = {Differential Integral Equations},
  fjournal   = {Differential and Integral Equations. An International Journal
                for Theory \& Applications},
  volume     = {36},
  year       = {2023},
  number     = {11-12},
  pages      = {877--906},
  issn       = {0893-4983},
  mrclass    = {58J65 (35B40 60J65 60J70 92C37)},
  mrnumber   = {4607799},
  mrreviewer = {Vassili\ N.\ Kolokol\cprime tsov},
  doi        = {10.57262/die036-1112-877}
}

@article{Schwarz2023,
  author   = {Schwarz, Simon and Herrmann, Michael and Sturm, Anja and
              Wardetzky, Max},
  title    = {Efficient random walks on {R}iemannian manifolds},
  journal  = {Found. Comput. Math.},
  fjournal = {Foundations of Computational Mathematics. The Journal of the
              Society for the Foundations of Computational Mathematics},
  volume   = {25},
  year     = {2025},
  number   = {1},
  pages    = {145--161},
  issn     = {1615-3375,1615-3383},
  mrclass  = {65C30 (58J65 60H35)},
  mrnumber = {4865121},
  doi      = {10.1007/s10208-023-09635-6}
}

@misc{dehoop2024,
  title           = {Geometric inverse problems on gas giants},
  author          = {Maarten V. de Hoop and Joonas Ilmavirta and Antti Kykkänen and Rafe Mazzeo},
  year            = {2024},
  eprint          = {2403.05475},
  archivepreprint = {arXiv},
  eprintclass     = {math.DG}
}

@book{evans2010pde,
  author     = {Evans, Lawrence C.},
  title      = {Partial differential equations},
  series     = {Graduate Studies in Mathematics},
  volume     = {19},
  edition    = {Second},
  publisher  = {American Mathematical Society, Providence, RI},
  year       = {2010},
  pages      = {xxii+749},
  isbn       = {978-0-8218-4974-3},
  mrclass    = {35-01},
  mrnumber   = {2597943},
  mrreviewer = {Diego\ M.\ Maldonado},
  doi        = {10.1090/gsm/019}
}

@article{holcman2014nep,
  author     = {Holcman, D. and Schuss, Z.},
  title      = {The narrow escape problem},
  journal    = {SIAM Rev.},
  fjournal   = {SIAM Review},
  volume     = {56},
  year       = {2014},
  number     = {2},
  pages      = {213--257},
  issn       = {0036-1445,1095-7200},
  mrclass    = {60J60 (35J25 35R01)},
  mrnumber   = {3201180},
  mrreviewer = {Vassili\ N.\ Kolokol\cprime tsov},
  doi        = {10.1137/120898395}
}

@article{singer2006NE,
  author   = {Singer, Amit and Schuss, Zeev and Holcman, David},
  title    = {Narrow escape. {III}. {N}on-smooth domains and {R}iemann
              surfaces},
  journal  = {J. Stat. Phys.},
  fjournal = {Journal of Statistical Physics},
  volume   = {122},
  year     = {2006},
  number   = {3},
  pages    = {491--509},
  issn     = {0022-4715,1572-9613},
  mrclass  = {82C31 (35J25 60J65 92C20)},
  mrnumber = {2205913},
  doi      = {10.1007/s10955-005-8028-4}
}

@article{holcman2004escape,
  author   = {Holcman, David and Schuss, Zeev},
  title    = {Escape through a small opening: receptor trafficking in a
              synaptic membrane},
  journal  = {J. Statist. Phys.},
  fjournal = {Journal of Statistical Physics},
  volume   = {117},
  year     = {2004},
  number   = {5-6},
  pages    = {975--1014},
  issn     = {0022-4715,1572-9613},
  mrclass  = {82C31 (37H99 37N25 60J65 92C20)},
  mrnumber = {2107903},
  doi      = {10.1007/s10955-004-5712-8}
}

@article{schuss2007NE,
  author  = {Zeev Schuss  and Amit Singer  and David Holcman },
  title   = {The narrow escape problem for diffusion in cellular microdomains},
  journal = {Proceedings of the National Academy of Sciences},
  volume  = {104},
  number  = {41},
  pages   = {16098-16103},
  year    = {2007},
  doi     = {10.1073/pnas.0706599104}
}

@article{Cheviakov2010asymptotic,
  author   = {Cheviakov, Alexei F. and Ward, Michael J. and Straube, Ronny},
  title    = {An asymptotic analysis of the mean first passage time for
              narrow escape problems. {II}. {T}he sphere},
  journal  = {Multiscale Model. Simul.},
  fjournal = {Multiscale Modeling \& Simulation. A SIAM Interdisciplinary
              Journal},
  volume   = {8},
  year     = {2010},
  number   = {3},
  pages    = {836--870},
  issn     = {1540-3459,1540-3467},
  mrclass  = {60J65 (35B25 35R60)},
  mrnumber = {2609641},
  doi      = {10.1137/100782620}
}

@book{holcman2015stoch,
  author    = {Holcman, David and Schuss, Zeev},
  title     = {Stochastic narrow escape in molecular and cellular biology: {A}nalysis and applications},
  publisher = {Springer, New York},
  year      = {2015},
  pages     = {xix+259},
  isbn      = {978-1-4939-3102-6; 978-1-4939-3103-3},
  mrclass   = {92-01 (35R60 60J28 60J70 92C37 92C40 92D20)},
  mrnumber  = {3379932},
  doi       = {10.1007/978-1-4939-3103-3}
}

@book{Bressloff2021a,
  author    = {Bressloff, Paul C.},
  title     = {Stochastic processes in cell biology. {V}ol. {I}},
  series    = {Interdisciplinary Applied Mathematics},
  volume    = {41},
  edition   = {Second},
  publisher = {Springer, Cham},
  year      = {2021},
  pages     = {xxxii+748},
  isbn      = {978-3-030-72514-3; 978-3-030-72515-0},
  mrclass   = {92C37 (60H30 82C31 82C70 92-01 92C40)},
  mrnumber  = {4390035},
  doi       = {10.1007/978-3-030-72515-0}
}

@book{Bressloff2021b,
  author    = {Bressloff, Paul C.},
  title     = {Stochastic processes in cell biology. {V}ol. {II}},
  series    = {Interdisciplinary Applied Mathematics},
  volume    = {41},
  edition   = {Second},
  publisher = {Springer, Cham},
  year      = {2021},
  pages     = {vii--xxxii and 749--1446},
  isbn      = {978-3-030-72518-1; 978-3-030-72519-8},
  mrclass   = {92C37 (60H30 82C31 82C70 92-01 92C40)},
  mrnumber  = {4398624},
  doi       = {10.1007/978-3-030-72519-8}
}

@article{benichou2008narrow,
  title     = {Narrow-Escape Time Problem: {T}ime Needed for a Particle to Exit a Confining Domain through a Small Window},
  author    = {B{\'e}nichou, Olivier and Voituriez, Raphaël},
  journal   = {Physical review letters},
  volume    = {100},
  number    = {16},
  pages     = {168105},
  year      = {2008},
  publisher = {APS},
  doi       = {10.1103/PhysRevLett.100.168105}
}

@article{singer2008NE,
  author   = {Singer, Amit and Schuss, Zeev and Holcman, David},
  title    = {Narrow escape and leakage of {B}rownian particles},
  journal  = {Phys. Rev. E (3)},
  fjournal = {Physical Review E. Statistical, Nonlinear, and Soft Matter
              Physics},
  volume   = {78},
  year     = {2008},
  number   = {5},
  pages    = {051111, 8},
  issn     = {1539-3755,1550-2376},
  mrclass  = {82C41 (82C70 92C37)},
  mrnumber = {2551367},
  doi      = {10.1103/PhysRevE.78.051111}
}

@article{AMMARI201266,
  title    = {Layer potential techniques for the narrow escape problem},
  journal  = {Journal de Mathématiques Pures et Appliquées},
  volume   = {97},
  number   = {1},
  pages    = {66-84},
  year     = {2012},
  issn     = {0021-7824},
  doi      = {10.1016/j.matpur.2011.09.011},
  author   = {Habib Ammari and Kostis Kalimeris and Hyeonbae Kang and Hyundae Lee},
  mrnumber = {2863765}
}

@article{Shiozawa2017escape,
  author     = {Shiozawa, Yuichi},
  title      = {Escape rate of the {B}rownian motions on hyperbolic spaces},
  journal    = {Proc. Japan Acad. Ser. A Math. Sci.},
  fjournal   = {Japan Academy. Proceedings. Series A. Mathematical Sciences},
  volume     = {93},
  year       = {2017},
  number     = {4},
  pages      = {27--29},
  issn       = {0386-2194},
  mrclass    = {60G17 (58J65 60F20)},
  mrnumber   = {3631821},
  mrreviewer = {Jean-Claude\ Gruet},
  doi        = {10.3792/pjaa.93.27}
}

@article{Cammarota2014asymptotic,
  author   = {Cammarota, Valentina and De Gregorio, Alessandro and Macci,
              Claudio},
  title    = {On the asymptotic behavior of the hyperbolic {B}rownian
              motion},
  journal  = {J. Stat. Phys.},
  fjournal = {Journal of Statistical Physics},
  volume   = {154},
  year     = {2014},
  number   = {6},
  pages    = {1550--1568},
  issn     = {0022-4715,1572-9613},
  mrclass  = {60F10 (58J65 60J60)},
  mrnumber = {3176419},
  doi      = {10.1007/s10955-014-0939-5}
}

@article{Cammarota2013hittingprob,
  author     = {Cammarota, Valentina and Orsingher, Enzo},
  title      = {Hitting spheres on hyperbolic spaces},
  journal    = {Theory Probab. Appl.},
  fjournal   = {Theory of Probability and its Applications},
  volume     = {57},
  year       = {2013},
  number     = {3},
  pages      = {419--443},
  issn       = {0040-585X,1095-7219},
  mrclass    = {60J60 (60D05)},
  mrnumber   = {3196780},
  mrreviewer = {Jean-Claude\ Gruet},
  doi        = {10.1137/S0040585X97986114}
}

@article{Gertsenshtein1959waveguides,
  title   = {Waveguides with Random Inhomogeneities and {B}rownian Motion In the {L}obachevsky Plane},
  author  = {Gertsenshtein, M.E. and Vasiliev, V.B},
  journal = {Theory of Probability and Its Applications},
  year    = {1959},
  month   = {Jul},
  day     = {17},
  volume  = {4},
  number  = {4},
  pages   = {391-398},
  issn    = {0040-585X},
  doi     = {10.1137/1104038}
}

@book{Aubin1982,
  author     = {Aubin, Thierry},
  title      = {Nonlinear analysis on manifolds. {M}onge-{A}mp\`ere equations},
  series     = {Grundlehren der mathematischen Wissenschaften [Fundamental
                Principles of Mathematical Sciences]},
  volume     = {252},
  publisher  = {Springer-Verlag, New York},
  year       = {1982},
  pages      = {xii+204},
  isbn       = {0-387-90704-1},
  mrclass    = {58-02 (35J60 35N99 53-02 58G30)},
  mrnumber   = {681859},
  mrreviewer = {A.\ G.\ Ramm},
  doi        = {10.1007/978-1-4612-5734-9}
}

@misc{WolframR2012,
  author       = {{Wolfram Research}},
  title        = {{ParametricNDSolve}, {W}olfram Language Function},
  year         = {2014},
  howpublished = {\url{https://reference.wolfram.com/language/ref/ParametricNDSolve.html}},
  note         = {[Accessed: 29-April-2025]}
}

@article{Gruet1996semigroup,
  author     = {Gruet, J.-C.},
  title      = {Semi-groupe du mouvement brownien hyperbolique},
  journal    = {Stochastics Stochastics Rep.},
  fjournal   = {Stochastics and Stochastics Reports},
  volume     = {56},
  year       = {1996},
  number     = {1-2},
  pages      = {53--61},
  issn       = {1045-1129},
  mrclass    = {60J60 (58G32)},
  mrnumber   = {1396754},
  mrreviewer = {Elton\ Pei\ Hsu},
  doi        = {10.1080/17442509608834035}
}

@article{ovidio2011Bessel,
  author     = {D'Ovidio, Mirko and Orsingher, Enzo},
  title      = {Bessel processes and hyperbolic {B}rownian motions stopped at
                different random times},
  journal    = {Stochastic Process. Appl.},
  fjournal   = {Stochastic Processes and their Applications},
  volume     = {121},
  year       = {2011},
  number     = {3},
  pages      = {441--465},
  issn       = {0304-4149,1879-209X},
  mrclass    = {60J65 (26A33 60J60)},
  mrnumber   = {2763091},
  mrreviewer = {Youngmee\ Kwon},
  doi        = {10.1016/j.spa.2010.11.002}
}

@article{Matsumoto2010limiting,
  author     = {Matsumoto, H.},
  title      = {Limiting behaviors of the {B}rownian motions on hyperbolic
                spaces},
  journal    = {Colloq. Math.},
  fjournal   = {Colloquium Mathematicum},
  volume     = {119},
  year       = {2010},
  number     = {2},
  pages      = {193--215},
  issn       = {0010-1354,1730-6302},
  mrclass    = {58J65 (60J65)},
  mrnumber   = {2609875},
  mrreviewer = {Maria\ Gordina},
  doi        = {10.4064/cm119-2-3}
}

@article{comtet1996diffusion,
  author     = {Comtet, Alain and Monthus, C\'ecile},
  title      = {Diffusion in a one-dimensional random medium and hyperbolic
                {B}rownian motion},
  journal    = {J. Phys. A},
  fjournal   = {Journal of Physics. A. Mathematical and General},
  volume     = {29},
  year       = {1996},
  number     = {7},
  pages      = {1331--1345},
  issn       = {0305-4470,1751-8121},
  mrclass    = {82B41 (60J99 82B44 82D30)},
  mrnumber   = {1395509},
  mrreviewer = {Aernout\ C. D. van Enter},
  doi        = {10.1088/0305-4470/29/7/006}
}

@book{hebey1999sobolev,
  author     = {Hebey, Emmanuel},
  title      = {Nonlinear analysis on manifolds: {S}obolev spaces and
                inequalities},
  series     = {Courant Lecture Notes in Mathematics},
  volume     = {5},
  publisher  = {New York University, Courant Institute of Mathematical
                Sciences, New York; American Mathematical Society, Providence,
                RI},
  year       = {1999},
  pages      = {x+309},
  isbn       = {0-9658703-4-0; 0-8218-2700-6},
  mrclass    = {58D15 (35J60 46E35 53C21 58J60)},
  mrnumber   = {1688256},
  mrreviewer = {Gilles\ Carron}
}

@book{folland1995pde,
  author    = {Folland, Gerald B.},
  title     = {Introduction to partial differential equations},
  edition   = {Second},
  publisher = {Princeton University Press, Princeton, NJ},
  year      = {1995},
  pages     = {xii+324},
  isbn      = {0-691-04361-2},
  mrclass   = {35-01},
  mrnumber  = {1357411}
}

@book{Bergh1976interpolation,
  author    = {Bergh, J\"oran and L\"ofstr\"om, J\"orgen},
  title     = {Interpolation spaces. {A}n introduction},
  series    = {Grundlehren der Mathematischen Wissenschaften},
  volume    = {No. 223},
  publisher = {Springer-Verlag, Berlin-New York},
  year      = {1976},
  pages     = {x+207},
  mrclass   = {46M35},
  mrnumber  = {482275}
}

@incollection{grieser2001basics,
  author     = {Grieser, Daniel},
  title      = {Basics of the {$b$}-calculus},
  booktitle  = {Approaches to singular analysis ({B}erlin, 1999)},
  series     = {Oper. Theory Adv. Appl.},
  volume     = {125},
  pages      = {30--84},
  publisher  = {Birkh\"auser, Basel},
  year       = {2001},
  isbn       = {3-7643-6518-8},
  mrclass    = {58J40 (35B25 35B40 35C20 35S05 58J37)},
  mrnumber   = {1827170},
  mrreviewer = {Robert\ Lauter}
}

@book{MelroseGreen,
  author     = {Melrose, Richard B.},
  title      = {The {A}tiyah-{P}atodi-{S}inger index theorem},
  series     = {Research Notes in Mathematics},
  volume     = {4},
  publisher  = {A K Peters, Ltd., Wellesley, MA},
  year       = {1993},
  pages      = {xiv+377},
  isbn       = {1-56881-002-4},
  mrclass    = {58G10 (58G15 58G25)},
  mrnumber   = {1348401},
  mrreviewer = {Rafe\ Mazzeo},
  doi        = {10.1016/0377-0257(93)80040-i}
}

@book{schulze1991psdo,
  author     = {Schulze, B.-W.},
  title      = {Pseudo-differential operators on manifolds with singularities},
  series     = {Studies in Mathematics and its Applications},
  volume     = {24},
  publisher  = {North-Holland Publishing Co., Amsterdam},
  year       = {1991},
  pages      = {vi+410},
  isbn       = {0-444-88137-9},
  mrclass    = {47G30 (35S05 47-02 58G15)},
  mrnumber   = {1142574},
  mrreviewer = {Luigi\ Rodino}
}

@article{marcus1998hardy,
  author     = {Marcus, Moshe and Mizel, Victor J. and Pinchover, Yehuda},
  title      = {On the best constant for {H}ardy's inequality in {$\mathbf
                R^n$}},
  journal    = {Trans. Amer. Math. Soc.},
  fjournal   = {Transactions of the American Mathematical Society},
  volume     = {350},
  year       = {1998},
  number     = {8},
  pages      = {3237--3255},
  issn       = {0002-9947,1088-6850},
  mrclass    = {26D15 (35J85 46E15)},
  mrnumber   = {1458330},
  mrreviewer = {Ling\ Yau\ Chan},
  doi        = {10.1090/S0002-9947-98-02122-9},
  url        = {https://doi.org/10.1090/S0002-9947-98-02122-9}
}

@book{mclean2000strongly,
  author     = {McLean, William},
  title      = {Strongly elliptic systems and boundary integral equations},
  publisher  = {Cambridge University Press, Cambridge},
  year       = {2000},
  pages      = {xiv+357},
  isbn       = {0-521-66332-6; 0-521-66375-X},
  mrclass    = {35J45 (47F05 47G10 47N20 65N38)},
  mrnumber   = {1742312},
  mrreviewer = {Dorina\ I.\ Mitrea}
}

@unpublished{MelroseDAOMWC,
  author = {Melrose, Richard B.},
  title  = {Differential analysis on manifolds with corners},
  year   = {1996},
  note   = {Unpublished works, Massechusetts Institute of Technology, Available at: \url{https:math.mit.edu/~rbm/book.html}}
}

@book{Abramowitz1964Handbook,
  author     = {Abramowitz, Milton and Stegun, Irene A.},
  title      = {Handbook of mathematical functions with formulas, graphs, and
                mathematical tables},
  series     = {National Bureau of Standards Applied Mathematics Series},
  volume     = {No. 55},
  publisher  = {U. S. Government Printing Office, Washington, DC},
  year       = {1972},
  pages      = {xiv+1046},
  mrclass    = {33.00 (65.05)},
  mrnumber   = {167642},
  mrreviewer = {D.\ H.\ Lehmer},
  edition = {10th}
}

\end{document}